\newtheorem{theorem}{Theorem}[section]
\newtheorem{lemma}[theorem]{Lemma}
\theoremstyle{definition}
\newtheorem{Prop}[theorem]{Proposition}
\newtheorem{Cor}[theorem]{Corollary}
\theoremstyle{remark}
\newtheorem{remark}[theorem]{Remark}
\numberwithin{equation}{section}
\newcommand{\str}[1]{\langle #1\rangle}
\def\az{\alpha}      \def\ud{{\underline{d}}}
\def\bbn{{\mathbb N}}  \def\bbz{{\mathbb Z}}  \def\bbq{{\mathbb Q}} \def\bb1{{\mathbb 1}}
   \def\bbe{{\mathbb E}} 
  \def\bbc{{\mathbb C}}
\def\ra{\rightarrow}
\def\hom{\mbox{Hom}}
 \def\ind{\mbox{ind}\,}
\def\ext{\mbox{Ext}\,} 
\def\dim{\mbox{dim}\,}
\def\mod{\mbox{mod}\,}
\def\uq2{U_q(\hat{sl}_2)}
\def\bb{{\bf b}}
\def\nd{{\noindent}}
\def\mc{{\mathcal{C}}}
\def\md{{\mathcal{D}}}
\def\mo{{\mathcal{O}}}
\def\mh{{\mathcal{H}}}
\begin{document}

\title[On the cluster multiplication Theorem]{On the cluster multiplication theorem for acyclic cluster algebras}

\author{Fan Xu}
\address{Department of Mathematical Sciences, Tsinghua University, Beijing 100084, P.R.China}

\email{fanxu@mail.tsinghua.edu.cn}
\thanks{The research was partially
supported  by NSF of China (No. 10631010)}

\subjclass[2000]{Primary  16G20, 16G70; Secondary  14M99, 18E30}

\date{November 15, 2007.}

\keywords{2-Calabi-Yau, Cluster category}

\begin{abstract}
In \cite{CK2005} and \cite{Hubery2005}, the authors proved the
cluster multiplication theorems for finite type and affine type.
We generalize their results and prove the cluster multiplication
theorem for arbitrary type by using the properties of
2--Calabi--Yau
 (Auslander--Reiten formula) and high order associativity.
\end{abstract}

\maketitle

\section*{Introduction}
Cluster algebras were introduced by Fomin and Zelevinsky in
\cite{FZ2002}. By definition, the cluster algebras are commutative
algebras generated by a set of variables called cluster variables.
Let $Q$ be a quiver and we denote by $\mathcal{A}(Q)$ the
associated cluster algebra.  If $Q$ does not contain oriented
cycles, we call $Q$ an acyclic quiver. The cluster algebras
associated to acyclic quivers  are called acyclic cluster
algebras. Their relations to quiver representations were first
revealed in \cite{MRZ}. In~\cite{BMRRT}, the authors found the
general framework of the link of cluster algebras and quiver
representations and introduced the cluster categories as the
categorification of acyclic cluster algebras. For an acyclic
quiver $Q$, the associated cluster category $\mathcal{C}(Q)$ is
the orbit category of the bounded derived category
$\mathcal{D}^{b}(\mathrm{mod}kQ)$ over a field $k$ by the
auto--equivalence $F:=[1]\tau^{-1}$ where $[1]$ is the translation
functor and $\tau$ is the AR-translation. In general, one can
define the cluster category of a hereditary category with Serre
duality $\nu$ by taking $\tau=[-1]\nu$ as shown in \cite{Keller}.

In \cite{CC}, the authors introduced a certain structure of Hall
algebra involving the cluster category $\mathcal{C}(Q)$ by
associating the objects in $\mathcal{C}(Q)$ to some variables
 given by an explicit map $X_?$, called the
Caldero-Chapoton map. We denote by $X_{M}$ the variable (called
the generalized cluster variable) associated to an object $M$ in
$\mathcal{C}(Q).$ In the case that $M$ is a non-projective
$kQ$-module, the authors gave the the multiplication of $X_M$ and
$X_{\tau M}$ as follows:
\begin{equation}\label{CC-AR}
X_{\tau M}X_{M}=X_B+1
\end{equation}
where $B$ is the middle term of the almost split sequence
involving $M$ and $\tau M.$

If $Q$ is a simply laced Dynkin quiver, Caldero and Keller
\cite{CK2005} extended the above multiplication \eqref{CC-AR} to
the multiplication of any two variables associated to two
indecomposable objects in $\mathcal{C}(Q)$ as follows
$$
\chi_c(\mathbb{P}\mathrm{Ext}^{1}(M,N))X_MX_N=\sum_{Y}(\chi_c(\mathbb{P}\mathrm{Ext}^{1}(M,N)_Y)+\chi_c(\mathbb{P}\mathrm{Ext}^{1}(N,M)_Y))X_Y
$$
where $\chi_c$ is the Euler-Poincar\'e characteristic of \'etale
cohomology with proper support, $M,N\in \mathcal{C}(Q)$ and $Y$
runs through the isoclasses of $\mathcal{C}(Q).$ This is called
the cluster multiplication theorem for finite type.

The above cluster multiplication theorem was generalized to the
affine type in \cite{Hubery2005} by using Green's theorem and the
existence of Hall polynomials for affine quivers.  A cluster
multiplication theorem for indecomposable regular modules over the
path algebra of an affine quiver was proved in \cite{dupont}. In
\cite{Palu}, the author gave the cluster multiplication theorem in
the case when $\mathrm{dim}_{k}\mathrm{Ext}^{1}(M,N)=1$ and
introduced the cluster character for an arbitrary 2-Calabi-Yau
category with cluster-tilting objects.

The aim of this paper is to generalize the cluster multiplication
theorems for finite and affine types to arbitrary type. Note that
there is an alternative proof of the cluster multiplication
theorem for arbitrary type in \cite{XX2007} by applying the
projective version of Green's theorem under the $\bbc^{*}$-action.
Compared with \cite{XX2007}, the present proof has the following
differences. First, it is more direct and simpler. The present
proof is independent of the projective version of Green's formula
and only involves the properties of high order associativity
(analogous to the associativity of the multiplication of a derived
Hall algebra defined in \cite{Toen2005}, see Section
\ref{highorder} for details) and 2-Calabi-Yau (Auslander-Reiten
formula).  Second, it is more accessible. The present proof uses
Euler characteristics of algebraic varieties instead of quasi
Euler characteristics of orbit spaces of algebraic varieties under
the actions of algebraic groups in \cite{XX2007}. Third, it is
more promising to apply the approaches in the present proof to
hereditary categories which are not module categories of
hereditary algebras.

The interaction between cluster algebras and representation theory
of quiver naturally makes us  ask the question whether there are
cluster algebras associated to the cluster categories of the
categories of coherent sheaves over weighted projective lines or
elliptic curves. Also, it is meaningful to ask what is the
corresponding cluster multiplication theorem. The intuitive idea
is to extend the method in \cite{XX2007}. However, the proof in
\cite{XX2007} heavily depends on Green's theorem which holds just
for module categories of hereditary algebras. Also, the proof of
the projective version of Green's formula in \cite{XX2007} is
complicated. We need to look for a new approach not using Green's
theorem.

The high order associativity in the present proof is analogous to
the associativity of the multiplication in derived Hall algebras
under combinatorial context. Hence, it is  hopeful that the
property of high order associativity holds for categories of
coherent sheaves over weighted projective lines or elliptic curves
if we discuss the property under the suitable geometric context
(see Remark \ref{final}). As for the property of 2-Calabi-Yau, the
situation is the same.

This paper is organized as follows. In Section \ref{basic}, we
recall the general theory involving the computation of Euler
characteristics of algebraic varieties and the cluster category
needed in this paper. In order to use the proposition in Section
\ref{basic} to compute Euler characteristics, we need construct
some morphisms of varieties. Section \ref{morphism} is contributed
to this aim. In Section \ref{highassociativity}, we prove an
equation called the high order associativity. The cluster
multiplication theorem for arbitrary type is stated and proved in
the last section. As an application of the proof of the main
theorem, we induce the formula \eqref{CC-AR}. Finally, we
illustrate our theorem through an example which has been studied
in detail in \cite{CZ} and \cite{Zelevinsky}.

\section{preliminaries}\label{basic}
Let $Q=(Q_0,Q_1,s,t)$ be an acyclic quiver, where $Q_0$ and $Q_1$
are the sets of vertices and arrows, respectively, and $s,t:
Q_1\rightarrow Q_0$ are maps such that any arrow $\az$ starts at
$s(\az)$ and terminates at $t(\az).$ Let $\bbc Q$ be the path
algebra of $Q$ over $\bbc$. We denote by $\mathrm{mod} \bbc Q$ the
category of finite dimension $\bbc Q$-modules. A map $\textbf d :
Q_0\rightarrow \bbn$ such that $Q_0\setminus \textbf d^{-1}(0)$ is
finite is called a \emph{dimension vector} for $Q.$
\subsection{Euler characteristics and the pushforward
functor}\label{geometric context}
  For any dimension vector
 ${\textbf d}=(d_i)_{i\in Q_0},$ we consider the affine space over $\bbc$
$$\bbe_{\textbf d}=\bbe_{\textbf d}(Q)=\bigoplus_{\az\in Q_1}\hom_{\bbc}(\bbc^{d_{s(\az)}},\bbc^{d_{t(\az)}}).$$
Any element $x=(x_{\az})_{\az\in Q_1}$ in $\bbe_{\textbf d}$
defines a representation $M(x)=(\bbc^{{\textbf d}},x)$ where
$\bbc^{{\textbf d}}=\bigoplus_{i\in Q_0}\bbc^{d_i}$. Naturally we
can define the action of the algebraic group $G_{{\textbf
d}}(Q)=\prod_{i\in Q_0}GL(\bbc^{d_i})$ on $\bbe_{\textbf d}$ by
$g.x=(g_{t(\alpha)}x_{\alpha}g^{-1}_{s(\alpha)})_{\alpha\in Q_1}.$

Let $X$ be an algebraic variety over $\bbc$. A constructible
function $f: X\rightarrow \bbq$ satisfies that $f(X)$ is a finite
subset of $\bbq$ and $f^{-1}(c)$ is a constructible subset of $X$
for any $c\in \bbq$. Write $M(X)$ for the $\mathbb{Q}$-vector
space of constructible functions over $X.$  Now, suppose
$\mathcal{O}$ is a constructible subset of $\bbe_{\textbf d}.$ The
function $1_{\mathcal{O}}$ is called a characteristic function if
$1_{\mathcal{O}}(x)=1$, for any $x\in \mathcal{O}$ and $0$
otherwise. It is clear that $1_{\mathcal{O}}$ is the simplest
constructible function and any constructible function is a linear
combination of characteristic functions.  We say $\mathcal{O}$ is
$G_{{\textbf d}}$-invariant if $G_{{\textbf
d}}\cdot\mathcal{O}=\mathcal{O}.$ In this case, $1_{\mo}$ is
called $G_{{\textbf d}}$-invariant.

In the following, the constructible sets and functions will always
be assumed $G_{{\textbf d}}$-invariant unless particularly
mentioned.

Let $\chi$ denote the Euler characteristic in compactly-supported
cohomology. Let $X$ be an algebraic variety and $\mo$ a
constructible subset of $X$ which is the disjoint union of
finitely many locally closed subsets $X_i$ for $i=1,\cdots,m.$
Define $\chi(\mo)=\sum_{i=1}^m\chi(X_i).$ We note that it is
well-defined. We have the following properties of $\chi$.
\begin{Prop}[\cite{Riedtmann} and \cite{Joyce}]\label{Euler} Let $X,Y$ be algebraic varieties over $\mathbb{C}.$
Then
\begin{enumerate}
    \item  If the algebraic variety $X$ is the disjoint union of
finitely many constructible sets $X_1,\cdots,X_r$, then
$$\chi(X)=\sum_{i=1}^{r}{\chi(X_i)}.$$
    \item  If $\varphi:X\longrightarrow Y$ is a morphism such that all fibers have the same Euler
characteristic $\chi$, then $\chi(X)=\chi\cdot \chi(Y).$
    \item $\chi(\bbc^n)=1$ and $\chi(\mathbb{P}^n)=n+1$ for all $n\geq
    0.$
\end{enumerate}
\end{Prop}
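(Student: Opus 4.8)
The plan is to treat the three assertions separately: (1) reduces to the long exact sequence of compactly supported cohomology for an open--closed decomposition, (2) to a stratification of $Y$ over which $\varphi$ becomes a locally trivial fibration together with the Leray spectral sequence, and (3) to a direct cohomology computation.

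For (1): if $X$ is a complex variety, $Z\subseteq X$ a closed subvariety and $U=X\setminus Z$, there is a long exact sequence $\cdots\to H^i_c(U;\bbq)\to H^i_c(X;\bbq)\to H^i_c(Z;\bbq)\to H^{i+1}_c(U;\bbq)\to\cdots$ in which every term is finite dimensional; taking alternating sums of dimensions gives $\chi(X)=\chi(U)+\chi(Z)$. By Noetherian induction every constructible set admits a finite stratification into locally closed (indeed smooth) subvarieties, and the quantity $\chi(\mo)=\sum_i\chi(X_i)$ is independent of the stratification chosen: two stratifications have a common refinement, and passing to it changes nothing by the open--closed additivity just established. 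Assertion (1) then follows by refining each $X_i$ into locally closed pieces, collecting all of them, and summing.

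For (2), which I expect to be the main obstacle: by a theorem of Verdier on the existence of stratifications of morphisms, there is a finite partition $Y=\bigsqcup_j Y_j$ into connected locally closed smooth subvarieties such that each restriction $\varphi^{-1}(Y_j)\to Y_j$ is a locally trivial fibration in the analytic topology. By (1) it suffices to prove $\chi(\varphi^{-1}(Y_j))=\chi\cdot\chi(Y_j)$ for each $j$, where by hypothesis the fibre $F_j$ over $Y_j$ satisfies $\chi(F_j)=\chi$. For a locally trivial fibration $p:E\to B$ with fibre $F$, the Leray spectral sequence in compactly supported cohomology $H^a_c(B;R^bp_!\bbq)\Rightarrow H^{a+b}_c(E)$ has $R^bp_!\bbq$ a local system of rank $\dim H^b_c(F)$; since the Euler characteristic of a local system depends only on its rank, $\sum_a(-1)^a\dim H^a_c(B;R^bp_!\bbq)=\dim H^b_c(F)\cdot\chi(B)$, and summing over $b$ gives $\chi(E)=\chi(F)\cdot\chi(B)$. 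Applying this with $E=\varphi^{-1}(Y_j)$, $B=Y_j$, $F=F_j$ and summing over $j$ yields $\chi(X)=\chi\cdot\chi(Y)$.

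For (3): $\mathbb{P}^n$ is compact, so $\chi(\mathbb{P}^n)=\sum_i(-1)^i\dim H^i(\mathbb{P}^n;\bbq)=n+1$, the cohomology being one dimensional in each even degree $0,2,\dots,2n$. For $\bbc^n$, Poincar\'e duality gives $H^i_c(\bbc^n;\bbq)\cong H^{2n-i}(\bbc^n;\bbq)^{\ast}$, which is $\bbq$ for $i=2n$ and $0$ otherwise, so $\chi(\bbc^n)=1$; alternatively one may induct using the decomposition $\mathbb{P}^n=\bbc^n\sqcup\mathbb{P}^{n-1}$ from (1), or apply (2) to the projection $\bbc^n\to\bbc^{n-1}$. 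The only genuinely delicate ingredient in the whole argument is the Verdier stratification used in (2) together with the bookkeeping of the Leray spectral sequence with possibly nontrivial monodromy; the rest is routine.
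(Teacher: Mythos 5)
Your proposal is correct and is essentially the standard proof one would find in the references the paper cites ([Riedtmann], [Joyce]); the paper itself states Proposition~\ref{Euler} without proof, delegating entirely to those sources. Your three ingredients --- the open--closed long exact sequence in $H^\ast_c$ for additivity plus Noetherian induction and common refinement for well-definedness on constructible sets, Verdier's stratification theorem combined with the Leray spectral sequence in compactly supported cohomology and the fact that $\chi_c$ of a local system on a stratum depends only on its rank, and the direct cell decomposition $\mathbb{P}^n=\bbc^n\sqcup\mathbb{P}^{n-1}$ --- are precisely the tools used in the literature, and you have identified correctly that (2) is the only assertion with real content.

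One minor point worth making explicit, since the paper works throughout with constructible subsets of affine module varieties $\bbe_{\textbf d}$: in your argument for (2), the strata $Y_j$ produced by Verdier's theorem are smooth but possibly noncompact, so the identity $\chi_c(Y_j;\mathcal{L})=\mathrm{rank}(\mathcal{L})\cdot\chi_c(Y_j)$ for a local system $\mathcal{L}$ is not quite the textbook statement about finite CW complexes; it should be justified, for instance via Poincar\'e--Verdier duality on the smooth stratum or by the observation that $\chi_c=\chi$ for complex algebraic varieties. Also, your hypothesis reading is the right one: the proposition asserts constancy of the fibre Euler characteristic, not of the fibre homeomorphism type, and you correctly reduce to the case of a locally trivial fibration over each connected stratum, where the two coincide. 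Nothing is missing.
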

 We recall the {\it pushforward}
functor from the category of algebraic varieties over $\mathbb{C}$
to the category of $\mathbb{Q}$-vector spaces (see
\cite{Macpherson} and \cite{Joyce}). Let $\phi: X\rightarrow Y$ be
a morphism of varieties. For $f \in M(X)$ and $y\in Y,$ define
$$
\phi_{*}(f)(y)=\sum_{c\neq 0}c\chi(f^{-1}(c)\cap \phi^{-1}(y)).
$$
\begin{theorem}[\cite{Dimca},\cite{Joyce}]\label{Joyce}
Let $X,Y$ and $Z$ be algebraic varieties over $\mathbb{C},$ $\phi:
X\rightarrow Y$ and $\psi: Y\rightarrow Z$ be morphisms of
varieties, and $f\in M(X).$ Then $\phi_{*}(f)$ is constructible,
$\phi_{*}: M(X)\rightarrow M(Y)$ is a $\mathbb{Q}$-linear map and
$(\psi\phi)_{*}=\psi_{*}\phi_{*}$ is a $\mathbb{Q}$-linear map
from $M(X)$ to $M(Z).$
\end{theorem}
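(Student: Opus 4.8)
The three assertions reduce, via linearity, to the case of a characteristic function, plus one standard geometric fact about morphisms. \textbf{Step 1 (reduction).} Since every $f\in M(X)$ is a finite $\bbq$-linear combination $f=\sum_i a_i\,1_{W_i}$ of characteristic functions of constructible subsets $W_i\subseteq X$, and since all three operations in question are (to be verified) $\bbq$-linear, it suffices to understand $\phi_*(1_W)$ for constructible $W\subseteq X$. Directly from the definition, $\phi_*(1_W)(y)=\chi\big(W\cap\phi^{-1}(y)\big)$, because $1_W$ takes the single nonzero value $1$.

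\textbf{Step 2 ($\bbq$-linearity of $\phi_*$).} Homogeneity is immediate: for $\lambda\neq 0$ one has $(\lambda f)^{-1}(c)=f^{-1}(c/\lambda)$, and the substitution $c\mapsto\lambda c$ in the defining sum gives $\phi_*(\lambda f)=\lambda\,\phi_*(f)$; the case $\lambda=0$ is trivial. For additivity, decompose $(f+g)^{-1}(c)=\bigsqcup_{a+b=c}f^{-1}(a)\cap g^{-1}(b)$ and intersect with $\phi^{-1}(y)$; using finite additivity of $\chi$ (Proposition \ref{Euler}(1)) and summing, both $\phi_*(f+g)(y)$ and $\phi_*(f)(y)+\phi_*(g)(y)$ collapse to the same finite sum
$$
\sum_{a,b}(a+b)\,\chi\big(f^{-1}(a)\cap g^{-1}(b)\cap\phi^{-1}(y)\big),
$$
the terms with vanishing coefficient doing no harm; finiteness holds because $f,g$ are constructible, hence take finitely many values.

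\textbf{Step 3 (the geometric input).} The one non-formal ingredient is the classical stratification theorem for morphisms of complex algebraic varieties (Verdier; alternatively via Thom--Mather theory or resolution of singularities): for any morphism $g\colon V\to Y$ there is a finite partition $Y=\bigsqcup_j Y_j$ into locally closed subvarieties such that each restriction $g^{-1}(Y_j)\to Y_j$ is a locally trivial topological fibration. In particular $y\mapsto\chi(g^{-1}(y))$ is constant on each $Y_j$, and by Proposition \ref{Euler}(2), $\chi(g^{-1}(Y_j))=\chi(g^{-1}(y))\cdot\chi(Y_j)$ for $y\in Y_j$. I expect this to be the main obstacle; the rest is bookkeeping with Euler characteristics. \textbf{Step 4 (constructibility).} Apply Step 3 to $g=\phi|_W\colon W\to Y$: then $\phi_*(1_W)(y)=\chi(W\cap\phi^{-1}(y))$ is constant on each $Y_j$, so $\phi_*(1_W)$ takes finitely many values with each level set a finite union of the $Y_j$, hence constructible. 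By Step 2, $\phi_*(f)=\sum_i a_i\,\phi_*(1_{W_i})$ is constructible for all $f$, so $\phi_*\colon M(X)\to M(Y)$ is a well-defined $\bbq$-linear map.

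\textbf{Step 5 (functoriality).} By Step 2 it suffices to check $(\psi\phi)_*(1_W)=\psi_*(\phi_*(1_W))$; evaluate both constructible functions at $z\in Z$. The left side is $\chi\big(W\cap(\psi\phi)^{-1}(z)\big)$. Put $h=\phi_*(1_W)$, so $h(y)=\chi(W\cap\phi^{-1}(y))$ and $Y_c:=h^{-1}(c)$ is constructible (Step 4); then $\psi_*(h)(z)=\sum_{c\neq 0}c\,\chi\big(Y_c\cap\psi^{-1}(z)\big)$. For $y\in\psi^{-1}(z)$ one has $\phi^{-1}(y)\subseteq(\psi\phi)^{-1}(z)$, so the morphism $p:=\phi|\colon W\cap(\psi\phi)^{-1}(z)\to\psi^{-1}(z)$ has $p^{-1}(y)=W\cap\phi^{-1}(y)$, hence $\chi(p^{-1}(y))=h(y)$. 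Applying Step 3 to $p$ gives a partition $\psi^{-1}(z)=\bigsqcup_j S_j$ with $\chi(p^{-1}(y))=c_j$ constant on $S_j$ and $\chi(p^{-1}(S_j))=c_j\,\chi(S_j)$; summing over $j$ via Proposition \ref{Euler}(1) yields $\chi\big(W\cap(\psi\phi)^{-1}(z)\big)=\sum_j c_j\,\chi(S_j)$. Grouping strata by common value $c_j$ and noting $\{y\in\psi^{-1}(z):h(y)=c\}=Y_c\cap\psi^{-1}(z)$ and that $c=0$ contributes nothing, this equals $\sum_{c\neq 0}c\,\chi(Y_c\cap\psi^{-1}(z))$. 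Hence the two sides coincide, so $(\psi\phi)_*=\psi_*\phi_*$, which is $\bbq$-linear as a composite of $\bbq$-linear maps.
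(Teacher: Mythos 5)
This theorem is cited by the paper from \cite{Dimca} and \cite{Joyce} without an internal proof, so there is no paper argument to compare against line by line. That said, your proof is correct and follows the standard route used in those references: reduce to characteristic functions, establish $\bbq$-linearity by the joint-level-set decomposition $(f+g)^{-1}(c)=\bigsqcup_{a+b=c}f^{-1}(a)\cap g^{-1}(b)$, invoke a Whitney/Verdier-type stratification of the morphism to make $y\mapsto\chi(W\cap\phi^{-1}(y))$ a step function on a finite constructible partition of $Y$ (this is indeed the single non-formal input), and then prove functoriality by restricting to $\psi^{-1}(z)$, stratifying the restricted map, and regrouping strata by the value of $\phi_*(1_W)$. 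Your bookkeeping in Step 2 (the terms with $a+b=0$ carrying coefficient zero) and in Step 5 (the identification $\{y\in\psi^{-1}(z):h(y)=c\}=Y_c\cap\psi^{-1}(z)$ before discarding $c=0$) is exactly right. One small point worth making explicit: when you apply the stratification theorem to $p\colon W\cap(\psi\phi)^{-1}(z)\to\psi^{-1}(z)$, the source is merely constructible rather than a variety, so you should first decompose it into finitely many locally closed pieces and stratify each, then take a common refinement of the resulting partitions of $\psi^{-1}(z)$; this is routine and does not affect the conclusion, but it closes the gap between ``morphism of varieties'' and ``morphism restricted to a constructible set.''
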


Given a $\bbc Q$-module $M$ and any dimension vector $\textbf e
\in \bbn^{Q_0}$, we denote by $\mathrm{Gr}_{\textbf e}(M)$ the set
of submodules $M_1 \subset M$ such that $\textbf{dim} M_1 =
\textbf e$. It is a closed subvariety of the product of
Grassmannians of subspaces $\prod_{i\in Q_0}
\mathrm{Gr}_{e_i}(\bbc^{d_i})$. Here, $\textbf{dim} M=\textbf d$.
Set
$$\mathrm{Gr}_{\textbf e}(\bbe_{\textbf d})=\{(M,M_1)\mid M\in
\bbe_{\textbf d},M_1\in \mathrm{Gr}_{\textbf e}(M)\}.$$
\begin{Prop}\label{finite}
Let $\textbf d$ and $\textbf e$ be two dimension vectors. Then the
function $f: \bbe_{\textbf d}\rightarrow \bbq$ sending $M$ to
$\chi(\mathrm{Gr}_{\textbf e}(M))$ is an $G_{\textbf d}$-invariant
constructible function.
\end{Prop}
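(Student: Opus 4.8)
The plan is to exhibit $\mathrm{Gr}_{\textbf e}(\bbe_{\textbf d})$ as the total space of an algebraic family over $\bbe_{\textbf d}$ whose fibre over $M$ is exactly $\mathrm{Gr}_{\textbf e}(M)$, and then apply the pushforward formalism of Theorem \ref{Joyce} together with the definition $\chi(\mathrm{Gr}_{\textbf e}(M)) = (\pi_*(1_{\mathrm{Gr}_{\textbf e}(\bbe_{\textbf d})}))(M)$, where $\pi: \mathrm{Gr}_{\textbf e}(\bbe_{\textbf d})\to \bbe_{\textbf d}$ is the projection $(M,M_1)\mapsto M$. The first step is to check that $\mathrm{Gr}_{\textbf e}(\bbe_{\textbf d})$ is genuinely an algebraic variety (or at least a constructible set in an ambient variety): it sits inside $\bbe_{\textbf d}\times \prod_{i\in Q_0}\mathrm{Gr}_{e_i}(\bbc^{d_i})$, and is cut out by the closed conditions that the chosen subspace at each vertex is simultaneously stable under all the linear maps $x_\az$. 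Concretely, for a point $(x,(V_i)_{i})$ the condition is $x_\az(V_{s(\az)})\subseteq V_{t(\az)}$ for every arrow $\az$, which is a closed algebraic condition on the product, so $\mathrm{Gr}_{\textbf e}(\bbe_{\textbf d})$ is a closed subvariety. The projection $\pi$ onto the first factor is a morphism of varieties, and by construction $\pi^{-1}(x) \cong \mathrm{Gr}_{\textbf e}(M(x))$.

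Next I would invoke Theorem \ref{Joyce}: since $1_{\mathrm{Gr}_{\textbf e}(\bbe_{\textbf d})}\in M(\mathrm{Gr}_{\textbf e}(\bbe_{\textbf d}))$ and $\pi$ is a morphism of varieties, the pushforward $\pi_*(1_{\mathrm{Gr}_{\textbf e}(\bbe_{\textbf d})})$ is a constructible function on $\bbe_{\textbf d}$. Unwinding the definition of $\pi_*$ at a point $x$ gives $\pi_*(1_{\mathrm{Gr}_{\textbf e}(\bbe_{\textbf d})})(x) = \chi(\pi^{-1}(x)) = \chi(\mathrm{Gr}_{\textbf e}(M(x)))$, which is precisely the function $f$ in the statement. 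Hence $f$ is constructible.

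It remains to verify $G_{\textbf d}$-invariance. The group $G_{\textbf d}$ acts on $\bbe_{\textbf d}$ by base change $g.x = (g_{t(\az)}x_\az g^{-1}_{s(\az)})$, and an isomorphism $M(x)\cong M(g.x)$ of representations is induced by $g$; this isomorphism carries submodules of $M(x)$ of dimension vector $\textbf e$ bijectively and isomorphically (as varieties) onto those of $M(g.x)$, because $g$ acts as an isomorphism of the product of Grassmannians $\prod_i \mathrm{Gr}_{e_i}(\bbc^{d_i})$ restricting to a variety isomorphism $\mathrm{Gr}_{\textbf e}(M(x))\xrightarrow{\sim}\mathrm{Gr}_{\textbf e}(M(g.x))$. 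Since $\chi$ is invariant under isomorphism of varieties, $f(g.x) = f(x)$, so $f$ is $G_{\textbf d}$-invariant. The only point demanding genuine care — and the one I would expect to be the main obstacle — is the first step: making precise that $\mathrm{Gr}_{\textbf e}(\bbe_{\textbf d})$ really is an algebraic variety and that $\pi$ is a morphism, i.e. writing the module-stability conditions explicitly as equations in Plücker or affine-chart coordinates on the product of Grassmannians so that Theorem \ref{Joyce} applies verbatim; the invariance and the pushforward bookkeeping are then formal.
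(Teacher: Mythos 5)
Your proof is correct and follows the same route as the paper: identify $\mathrm{Gr}_{\textbf e}(\bbe_{\textbf d})$ as a variety projecting onto $\bbe_{\textbf d}$ and apply Theorem \ref{Joyce} to $\pi_*(1_{\mathrm{Gr}_{\textbf e}(\bbe_{\textbf d})})$. You in fact supply two details the paper leaves implicit, namely the verification that $\mathrm{Gr}_{\textbf e}(\bbe_{\textbf d})$ is a closed subvariety of $\bbe_{\textbf d}\times\prod_i\mathrm{Gr}_{e_i}(\bbc^{d_i})$ and the explicit $G_{\textbf d}$-invariance check, both of which are the right things to say.
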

\begin{proof} Consider the natural projection
$\phi: \mathrm{Gr}_{\textbf e}(\bbe_{\textbf d})\rightarrow
\bbe_{\textbf d}.$ It is clear that $\phi$ is surjective. By
Theorem \ref{Joyce}, we know that
$\phi_{*}(1_{\mathrm{Gr}_{\textbf e}(\bbe_{\textbf d})})=f$ is
constructible.
\end{proof}
For fixed $\textbf d,$  we can make finitely many choices of
$\textbf e$ such that $\mathrm{Gr}_{\textbf e}(\bbe_{\textbf d})$
is nonempty. For $M\in \bbe_{\textbf d},$ we define \cite[Section
1.2]{GLS2006}$$\str{M}:=\{M'\in \bbe_{\textbf d}\mid
\chi(\mathrm{Gr}_{\textbf e}(M'))=\chi(\mathrm{Gr}_{\textbf e}(M))
\mbox{ for any }\textbf e\}$$ Proposition \ref{finite} has the
following corollary.
\begin{Cor}\label{partition}
There exists a finite finite subset $S(\textbf d)$ of $\bbe_{\textbf
d}$ such that $$\bbe_{\textbf d}=\bigsqcup_{M\in S(\textbf
d)}\str{M}.$$
\end{Cor}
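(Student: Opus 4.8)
The plan is to deduce Corollary~\ref{partition} directly from Proposition~\ref{finite} together with the finiteness observation already recorded in the text (that for a fixed $\textbf d$ only finitely many $\textbf e$ give a nonempty $\mathrm{Gr}_{\textbf e}(\bbe_{\textbf d})$). First I would fix the dimension vector $\textbf d$ and let $\textbf e_1,\dots,\textbf e_N$ be the finitely many dimension vectors $\textbf e$ for which $\mathrm{Gr}_{\textbf e}(\bbe_{\textbf d})$ is nonempty; for all other $\textbf e$ the function $M'\mapsto\chi(\mathrm{Gr}_{\textbf e}(M'))$ is identically zero and so imposes no constraint in the definition of $\str{M}$. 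Thus the equivalence relation ``$M\sim M'$ iff $\chi(\mathrm{Gr}_{\textbf e}(M))=\chi(\mathrm{Gr}_{\textbf e}(M'))$ for all $\textbf e$'' is the same as requiring equality only for $\textbf e\in\{\textbf e_1,\dots,\textbf e_N\}$, and $\str{M}$ is exactly the equivalence class of $M$.

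Next I would package these finitely many functions into a single map. For each $j$, by Proposition~\ref{finite} the function $f_j\colon\bbe_{\textbf d}\to\bbq$, $M'\mapsto\chi(\mathrm{Gr}_{\textbf e_j}(M'))$, is constructible, hence takes only finitely many values and has constructible level sets. Consider the combined map $F=(f_1,\dots,f_N)\colon\bbe_{\textbf d}\to\bbq^N$. Its image is a finite subset of $\bbq^N$, say with values $v_1,\dots,v_M$, and the fibres $F^{-1}(v_\ell)$ are constructible (each is a finite intersection of constructible level sets of the $f_j$) and $G_{\textbf d}$-invariant (since each $f_j$ is $G_{\textbf d}$-invariant). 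These fibres partition $\bbe_{\textbf d}$, and by construction two modules lie in the same fibre precisely when they are equivalent, i.e. each nonempty fibre is some $\str{M}$. Choosing one representative $M$ from each nonempty fibre yields the finite set $S(\textbf d)$, and $\bbe_{\textbf d}=\bigsqcup_{M\in S(\textbf d)}\str{M}$.

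There is essentially no hard step here: the statement is a formal consequence of constructibility plus the two finiteness inputs. The only point requiring a little care is the reduction from ``all $\textbf e$'' to ``finitely many $\textbf e$'' in the definition of $\str M$ — one must note that $\mathrm{Gr}_{\textbf e}(M')=\varnothing$ for all $M'\in\bbe_{\textbf d}$ once $\textbf e$ fails the obvious constraints (for instance $e_i\le d_i$ for all $i$, which already bounds $\textbf e$ to a finite set), so those coordinates contribute nothing. I would also remark that the partition into the sets $\str{M}$ is canonical; the set $S(\textbf d)$ is merely a choice of representatives, and its finiteness is equivalent to the finiteness of the image of $F$, which is immediate from constructibility of each $f_j$.
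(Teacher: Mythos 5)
Your proof is correct and fills in exactly the argument the paper leaves implicit (the paper states this as an immediate corollary of Proposition~\ref{finite} without writing out a proof): one packages the finitely many relevant constructible functions $M'\mapsto\chi(\mathrm{Gr}_{\textbf e}(M'))$ into a single map to $\bbq^N$, whose finitely many fibres are precisely the sets $\str{M}$, and constructibility gives finiteness of the image. No gaps; this is the intended reasoning.
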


\bigskip

\subsection{The cluster category}
Given an acyclic quiver $Q$ and  $i \in Q_0$, we denote by $S_i$
the simple $\bbc Q$-module associated to $i$, by $P_i$ its
projective cover and by $I_i$ its injective hull. Given a $\bbc
Q$-module $M$, we denote by $\textbf{dim} M$ its dimension vector.
For any $i \in Q_0$, we will always denote by $s_i$ the $i$-th
vector of the canonical basis of $\bbz^{Q_0}$. In particular for
any $i \in Q_0$, we have $\textbf{dim} S_i=s_i$. We denote by
$\left <-,-\right >$ the Euler form on $\bbc Q$-mod given by
 $$\left <M, N\right >:=\left <\textbf{dim} M, \textbf{dim} N\right >=\mathrm{dim}_{\bbc} \hom_{\bbc Q}(M,N) - \mathrm{dim}_{\bbc} \ext^1_{\bbc Q}(M,N)$$
 for any $\bbc Q$-modules $M$ and $N$. In the following, for any additive category $\mathcal{F}$, we denote by $\mathrm{ind}(\mathcal{F})$ the subcategory of $\mathcal{F}$ formed
by a system of representatives of the isomorphism classes of
indecomposable objects in $\mathcal{F}.$

Let $\md^b(Q)$ be the bounded derived category of $\mathrm{mod}
\bbc Q$ with the shift functor $[1]$ and the AR-translation
$\tau$. The cluster category associated to $Q$ is the orbit
category $\mathcal{C}=\mathcal{C}(Q):=\md^{b}(Q)/F$ with
$F=[1]\tau^{-1}.$ It is proved in \cite{Keller} that $\mc$ is a
triangulated category with the canonical triangle functor
$\md^b(Q)\rightarrow \mc.$ As in \cite{BMRRT} and \cite{CK2005},
the category $\mathcal{C}$ is 2-Calabi-Yau, i.e., there is an
almost canonical non degenerate bifunctorial pairing
$$\phi: \ext^1_{\mc}(M,N)\times \ext^{1}_{\mc}(N,M)\rightarrow
\bbc.
$$
Here, the property of 2-Calabi-Yau is induced by Auslander-Reiten
formula $$\ext^1_{\bbc Q}(X,Y)\cong D\hom_{\bbc Q}(Y,\tau X)$$ for
$X,Y\in \mathrm{mod}\bbc Q.$ We can identify $\bbc Q$-modules with
their images in $\mc(Q)$ by considering the embedding of
$\mathrm{mod}\bbc Q$ into $\mc(Q).$ Each object $M$ in $\mc(Q)$
can be uniquely decomposed into the form: $M=M_0\oplus
P_M[1]=M_0\oplus\tau P_M$, where $M_0\in\mod \bbc Q$ and $P_M$ is
projective in $\mod \bbc Q.$

The Caldero-Chapton map of an acyclic quiver $Q$ is the map
$$X_?^Q: \mathrm{obj}(\mc(Q))\ra\bbq(x_1,\cdots,x_n)$$ defined in \cite{CC} by
the following rules:
            \begin{enumerate}
                \item if $M$ is an indecomposable $\bbc Q$-module, then
                    $$
                        X_M^Q = \sum_{\textbf e} \chi(\mathrm{Gr}_{\textbf e}(M)) \prod_{i \in Q_0} x_i^{-\left<\textbf e, s_i\right>-\left <s_i, \textbf{dim} M - \textbf e\right
                        >};
                    $$
                \item if $M=P_i[1]$ is the shift of the projective module associated to $i \in Q_0$, then $$X_M^Q=x_i;$$
                \item for any two objects $M,N$ of $\mathcal C_Q$,
                we have
                    $$X_{M \oplus N}^Q=X_M^QX_N^Q.$$
            \end{enumerate}
Here, for $v=(v_1,\cdots,v_n)\in \bbz^n$, we set
$$
x^v =x_1^{v_1}\cdots x_n^{v_n}.
$$

Without risk of confusion, we can write $X_?$ instead of $X^Q_?$.
Let $R=(r_{ij})$ be a matrix of size $|Q_0|\times |Q_0|$
satisfying
$$r_{ij}=\dim_{\bbc}\mathrm{Ext}^1(S_i,S_j)$$ for any $i, j\in Q_0$.
We need the following lemma \cite [Lemma 1]{Hubery2005} to
reformulate the Caldero-Chapoton map.
\begin{lemma}\label{Huberylemma1}
For any $\bbc Q$-module $M$ without projective summands, we have
$$
(\textbf{dim}M)R+(\textbf{dim}\tau
M)R^{tr}=\textbf{dim}M+\textbf{dim}\tau M
$$
For a projective $\bbc Q$-module $P$ and an injective module $I$, we
have
$$
(\textbf{dim}P)R=\textbf{dim}radP, \quad
(\textbf{dim}I)R^{tr}=\textbf{dim}socI
$$
\end{lemma}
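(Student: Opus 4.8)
The plan is to verify each of the three identities at the level of dimension vectors, using the standard description of the Auslander--Reiten translate $\tau$ on $\mathrm{mod}\,\bbc Q$ via the Coxeter transformation. Recall that for a hereditary algebra $\bbc Q$ with $n=|Q_0|$ simple modules, the Cartan matrix $C$ has entries $c_{ij}=\dim_{\bbc}\hom_{\bbc Q}(P_j,S_i)$, equivalently $C$ records the dimension vectors of the indecomposable projectives as its columns; and one has the bilinear form identity $\langle \udd,\ue\rangle = \udd\, C^{-tr}\,\ue^{tr}$ for dimension vectors. The matrix $R=(r_{ij})$ with $r_{ij}=\dim_{\bbc}\ext^1(S_i,S_j)$ is nothing but the ``arrow-count'' matrix, and since $Q$ is acyclic with no relations, $C^{-1} = I - R$ (upper/lower triangular up to reordering), i.e.\ $R = I - C^{-1}$. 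Equivalently $C^{-1}=I-R$ and $C^{-tr}=I-R^{tr}$.

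First I would establish the projective and injective identities, which are purely combinatorial. For an indecomposable projective $P_i$ one has $\textbf{dim}\,\rad P_i = \textbf{dim}\,P_i - s_i$, and summing $s_j$ over arrows into configurations gives precisely $(\textbf{dim}\,P_i)R = \textbf{dim}\,\rad P_i$; since every projective is a direct sum of the $P_i$ and both sides are additive, the first projective identity follows. Dually, using that $R^{tr}$ counts arrows in the opposite direction and $\textbf{dim}\,\mathrm{soc}\,I_i = \textbf{dim}\,I_i - s_i$, one gets $(\textbf{dim}\,I)R^{tr}=\textbf{dim}\,\mathrm{soc}\,I$ for any injective $I$. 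Concretely these amount to $\textbf{dim}\,P = (\textbf{dim}\,P)(I-C^{-1}) + \text{(top)}$ sort of bookkeeping; I would phrase it directly in terms of the definition of $R$ and the short exact sequence $0\to\rad P_i\to P_i\to S_i\to 0$ read off vertex by vertex.

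Next, for a module $M$ without projective summands I would use the AR-translate formula $\textbf{dim}\,\tau M = -(\textbf{dim}\,M)\,C^{-tr}C = (\textbf{dim}\,M)\,\Phi$ where $\Phi=-C^{-tr}C$ is (minus) the Coxeter matrix; the absence of projective summands is exactly what guarantees this linear formula computes $\textbf{dim}\,\tau M$ with no correction terms (this is the standard fact that $\tau = D\,\mathrm{Tr}$ acts on dimension vectors by the Coxeter transformation on the non-projective part). Substituting $C^{-1}=I-R$, so $C=(I-R)^{-1}$ and $C^{-tr}=I-R^{tr}$, one computes $\textbf{dim}\,\tau M = -(\textbf{dim}\,M)(I-R^{tr})(I-R)^{-1}$, hence $(\textbf{dim}\,\tau M)(I-R) = -(\textbf{dim}\,M)(I-R^{tr})$, which rearranges directly to
$$
(\textbf{dim}\,M)R + (\textbf{dim}\,\tau M)R^{tr} = \textbf{dim}\,M + \textbf{dim}\,\tau M,
$$
the desired identity. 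The only subtlety is invertibility of $I-R$, which holds because after a topological ordering of $Q_0$ the matrix $R$ is strictly triangular, so $I-R$ is unipotent; this is where acyclicity of $Q$ is used.

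The main obstacle, such as it is, is not any deep step but rather pinning down conventions consistently: whether dimension vectors are row or column vectors, whether $C$ has projectives or injectives as columns, and the resulting transpose placements in $\langle-,-\rangle = \udd\,C^{-tr}\,\ue^{tr}$ and in $\textbf{dim}\,\tau M$. I would fix these at the outset by checking the projective identity $(\textbf{dim}\,P_i)R=\textbf{dim}\,\rad P_i$ against a small explicit example (say $A_2$ with a single arrow), and then the general identities follow by the linear-algebra manipulation above together with additivity over direct sums. Since the statement is quoted from \cite{Hubery2005}, one could alternatively just cite it, but the above gives a self-contained derivation.
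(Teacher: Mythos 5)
The paper does not give a proof of this lemma; it simply cites \cite{Hubery2005}, so there is no internal argument to compare against and the proposal must be judged on its own.

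Your overall strategy --- translate the matrix $R$ into the Euler form matrix $I-R$, and read off the three identities from $\langle \textbf{dim}\,P_i, s_j\rangle=\delta_{ij}$, $\langle s_j,\textbf{dim}\,I_i\rangle=\delta_{ij}$, and the Coxeter transformation --- is correct and is essentially the standard argument. The projective and injective cases are fine in outline: from $\langle s_i,s_j\rangle=\delta_{ij}-r_{ij}$ one gets $E:=I-R=C^{-1}$, and $(\textbf{dim}\,P_i)E=s_i$, $(\textbf{dim}\,I_i)E^{tr}=s_i$ give the two displayed equalities after subtracting $s_i$.

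There is, however, a concrete error in the $\tau$-case: the Coxeter matrix in your row-vector/right-action convention is $\Phi=-C^{-1}C^{tr}=-(I-R)(I-R^{tr})^{-1}$, not $\Phi=-C^{-tr}C$. With the formula you wrote, the computation yields $(\textbf{dim}\,\tau M)(I-R)=-(\textbf{dim}\,M)(I-R^{tr})$, which rearranges to $(\textbf{dim}\,\tau M)R+(\textbf{dim}\,M)R^{tr}=\textbf{dim}\,M+\textbf{dim}\,\tau M$ --- the lemma with $R$ and $R^{tr}$ interchanged. Your final sentence then silently swaps them back, a second slip that cancels the first. To see the sign, note that $\nu P=I$ replaces the $i$-th row of $C$ with the $i$-th row of $C^{tr}$, so $\textbf{dim}\,\nu P=(\textbf{dim}\,P)C^{-1}C^{tr}$, and the exact sequence $0\to\tau M\to\nu P_1\to\nu P_0\to 0$ (valid since $\hom(M,\bbc Q)=0$ for $M$ without projective summands) gives $\textbf{dim}\,\tau M=-(\textbf{dim}\,M)C^{-1}C^{tr}$. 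You did flag conventions as the main danger and proposed to calibrate against an example; if you actually carry that out the error would surface immediately, so this is a write-up rather than a conceptual gap. A slightly cleaner route that sidesteps $C^{\pm 1}$ and $C^{\pm tr}$ entirely: the Auslander--Reiten formula gives $\langle\textbf{dim}\,M,d\rangle=-\langle d,\textbf{dim}\,\tau M\rangle$ for all $d$, hence $(\textbf{dim}\,M)(I-R)d^{tr}=-(\textbf{dim}\,\tau M)(I-R^{tr})d^{tr}$ for all $d$, so $(\textbf{dim}\,M)(I-R)=-(\textbf{dim}\,\tau M)(I-R^{tr})$, which is the desired identity after rearranging.
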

Following this lemma, we reformulate the above map by the
following rules:

\begin{enumerate}
\item
$$
X_{\tau P}=X_{P[1]}=x^{\textbf{dim} {P/rad P}},
X_{\tau^{-1}I}=X_{I[-1]}=x^{\textbf{dim} socI}$$ for any
projective $\bbc Q$-module $P$ and any injective $\bbc Q$-module
$I$; \item
$$ X_{M}=\sum_{\textbf e}\chi(\mathrm{Gr}_{\textbf e}(M))x^{\textbf{e} R+(\textbf{dim} M-\textbf e)R^{tr}-
\textbf{dim} M }
$$
where $M$ is a $\bbc Q$-module and $R^{tr}$ is the transpose of
the matrix $R$.
\end{enumerate}

The following proposition shows that the above reformulation
induces the third rule in the definition of the Caldero-Chapoton
map. It is actually the degeneration form of Green's formula in
\cite{DXX}.
\begin{Prop}
For any $M, N\in \mathrm{mod}\bbc Q,$ we have $X_{M\oplus
N}=X_{M}X_{N}.$
\end{Prop}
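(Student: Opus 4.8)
The plan is to reduce the identity $X_{M\oplus N}=X_M X_N$ to a purely combinatorial statement about Euler characteristics of submodule Grassmannians, and then verify that statement by comparing, on each side, the coefficient of a fixed Laurent monomial $x^v$. Using the reformulated rule (2), the left-hand side is $\sum_{\textbf f}\chi(\mathrm{Gr}_{\textbf f}(M\oplus N))\,x^{\textbf f R+(\textbf{dim}(M\oplus N)-\textbf f)R^{tr}-\textbf{dim}(M\oplus N)}$, while the right-hand side expands to $\sum_{\textbf e,\textbf g}\chi(\mathrm{Gr}_{\textbf e}(M))\chi(\mathrm{Gr}_{\textbf g}(N))\,x^{(\textbf e+\textbf g)R+(\textbf{dim}M+\textbf{dim}N-\textbf e-\textbf g)R^{tr}-\textbf{dim}M-\textbf{dim}N}$. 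Since the exponent depends on $\textbf e,\textbf g$ only through $\textbf f:=\textbf e+\textbf g$, and the exponents on the two sides agree once we set $\textbf f=\textbf e+\textbf g$, the whole identity follows once I prove the numerical equality
\begin{equation}\label{eq:grassidentity}
\chi(\mathrm{Gr}_{\textbf f}(M\oplus N))=\sum_{\textbf e+\textbf g=\textbf f}\chi(\mathrm{Gr}_{\textbf e}(M))\,\chi(\mathrm{Gr}_{\textbf g}(N))
\end{equation}
for all $M,N\in\mathrm{mod}\,\bbc Q$ and all dimension vectors $\textbf f$. (One must check separately that the rules for $X_{\tau P}$, $X_{\tau^{-1}I}$ are consistent with multiplicativity on summands, but that is the trivial additivity of $\textbf{dim}(P/\mathrm{rad}\,P)$ and $\textbf{dim}\,\mathrm{soc}\,I$ under direct sums; the substance is \eqref{eq:grassidentity}.)

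To prove \eqref{eq:grassidentity} I would set up the ``diagonal'' morphism of varieties
$$
\Psi:\ \bigsqcup_{\textbf e+\textbf g=\textbf f}\mathrm{Gr}_{\textbf e}(M)\times\mathrm{Gr}_{\textbf g}(N)\ \longrightarrow\ \mathrm{Gr}_{\textbf f}(M\oplus N),\qquad (M_1,N_1)\longmapsto M_1\oplus N_1,
$$
which is a well-defined morphism of algebraic varieties (the source is a finite disjoint union since only finitely many $\textbf e\le\textbf f$ occur). The right-hand side of \eqref{eq:grassidentity} is $\chi$ of the source by Proposition \ref{Euler}(1) together with multiplicativity of $\chi$ for products (which again follows from Proposition \ref{Euler}(2) applied to the projection of a product, all fibers being isomorphic). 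So it suffices to show that $\Psi$ induces an equality of Euler characteristics, for which by Proposition \ref{Euler}(2) it is enough that every fiber of $\Psi$ has Euler characteristic $1$. Given a submodule $L\subseteq M\oplus N$ with $\textbf{dim}\,L=\textbf f$, the fiber $\Psi^{-1}(L)$ consists of pairs $(M_1,N_1)$ with $M_1\oplus N_1=L$ inside $M\oplus N$; equivalently $M_1\subseteq M\cap L$ — wait, this is exactly where the real content lies.

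The main obstacle is precisely that $\Psi$ is \emph{not} surjective and its fibers are not points: a submodule $L$ of $M\oplus N$ need not be of the form $M_1\oplus N_1$. The fix is to stratify $\mathrm{Gr}_{\textbf f}(M\oplus N)$ by the pair of dimension vectors $(\textbf{dim}(L\cap M),\,\textbf{dim}\,\pi_N(L))$, where $\pi_N$ is the projection onto $N$, and to analyze the fiber of the map $L\mapsto (L\cap M,\ \pi_N(L))$ over each stratum. The key geometric input is that, with $M_1:=L\cap M$ and $N_1:=\pi_N(L)$ fixed, the set of $L$ with these invariants is an affine bundle over a point (the complementary data is a $\bbc Q$-module homomorphism $N_1\to M/M_1$ whose graph twists the trivial extension — an affine space, hence $\chi=1$), while the map $(M_1,N_1)\mapsto M_1\oplus N_1$ realizes the ``untwisted'' section. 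Concretely I would factor $\Psi$ through the variety of triples $(M_1,N_1,\phi)$ with $\phi\in\hom_{\bbc Q}(N_1,M/M_1)$, show this variety maps isomorphically onto $\mathrm{Gr}_{\textbf f}(M\oplus N)$ after summing over $\textbf e+\textbf g=\textbf f$, and then project away the affine-space coordinate $\phi$ using Proposition \ref{Euler}(2) to kill it. Assembling: $\chi(\mathrm{Gr}_{\textbf f}(M\oplus N))$ equals $\chi$ of the triples variety equals $\chi$ of the pairs variety (fibers $\bbc^{\dim\hom(N_1,M/M_1)}$ have $\chi=1$) equals the right-hand side of \eqref{eq:grassidentity}, which finishes the proof. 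The delicate bookkeeping — checking the triples variety is genuinely the full Grassmannian and that the fiber dimensions are locally constant on strata so that Proposition \ref{Euler}(2) applies — is the only nontrivial part.
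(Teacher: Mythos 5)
Your corrected argument coincides with the paper's proof: reduce to the identity $\chi(\mathrm{Gr}_{\textbf f}(M\oplus N))=\sum_{\textbf e+\textbf g=\textbf f}\chi(\mathrm{Gr}_{\textbf e}(M))\chi(\mathrm{Gr}_{\textbf g}(N))$, then observe that the surjection $\mathrm{Gr}_{\textbf f}(M\oplus N)\to\bigsqcup_{\textbf e+\textbf g=\textbf f}\mathrm{Gr}_{\textbf e}(M)\times\mathrm{Gr}_{\textbf g}(N)$, $L\mapsto(L\cap M,\pi_N(L))$, has affine fibers (the paper cites Hubery's Lemma~7 for this, which is exactly your identification of the fiber with $\hom_{\bbc Q}(N_1,M/M_1)$), and apply Proposition~\ref{Euler}. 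Your opening attempt with the map $(M_1,N_1)\mapsto M_1\oplus N_1$ is a false start, but you identified the obstacle and landed on the same morphism and the same fiber computation as the paper.
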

\begin{proof}
It is enough to prove
$$\chi(\mathrm{Gr}_{\textbf e}(M\oplus
N))=\sum_{\textbf e_1+\textbf e_2=\textbf
e}\chi(\mathrm{Gr}_{\textbf e_1}(M))\cdot \chi(\mathrm{Gr}_{\textbf
e_2}(N)).
$$ We define a morphism of varieties:
$$
\mathrm{Gr}_{\textbf e}(M\oplus N)\rightarrow \bigsqcup_{\textbf
e_1+\textbf e_2=\textbf e}\mathrm{Gr}_{\textbf e_1}(M)\times
\mathrm{Gr}_{\textbf e_2}(N).
$$
Any submodule $L$ of $M\oplus N$ induces uniquely the submodules
$M_1$  and $N_1$ of $M$ and $N$, respectively. It is a surjective
morphism. The fibre for any $(M_1,N_1)\in \mathrm{Gr}_{\textbf
e_1}(M)\times \mathrm{Gr}_{\textbf e_2}(N)$ is isomorphic to an
affine space by \cite[Lemma 7]{Hubery2005}. Then, Proposition
\ref{Euler} induces the identity of the proposition.
\end{proof}
For any $M\in \mod\bbc Q,$ we say that $P_0$ is the maximal
projective direct summand of $M$ if $M\simeq M'\oplus P_0$ as the
$\bbc Q$-modules and $M'$ does not contain projective direct
summands.

Let $\tilde{A}(Q)$ be the subalgebra of $\bbq(x_1,\cdots,x_n)$
generated by
$$\{X_{M}, X_{\tau P}|M, P \in\ind(\mathrm{mod} \bbc Q) \mbox{ and } P \ \mbox{is
projective}\ \}$$ and $A(Q)$ the subalgebra of $\tilde{A}(Q)$
generated by $$\{X_{M}, X_{\tau P}|M, P \in\ind(\mathrm{mod}\bbc
Q),  P \ \mbox{is projective}  \mbox{ and } \ext^1(M,M)=0\ \}.$$
The algebra $A(Q)$ is called the cluster algebra associated to
$Q$. If $Q$ is of finite type, then $\tilde{A}(Q)$ is just the
cluster algebra $A(Q)$ as shown in \cite{CK2006}. We note that the
relation between $\tilde{A}(Q)$ and $A(Q)$ is different from the
relation between the Ringel-Hall algebra and the composition
algebra for $Q$ (see Section \ref{example}).

\section{Morphisms of varieties induced by kernel, cokernel and AR-translation}\label{morphism}
The cluster multiplication theorem for arbitrary type in Section 4
will be expressed under the context of $\bbc Q$-modules. In the
sequel, we will only consider the restriction of AR-translation
$\tau$ to $\mathrm{mod} \bbc Q,$ rather than the cluster category.
We use the same notation without risk of confusion. Hence, $\tau
P=\tau^{-1}I=0$ for any projective module $P$ and injective module
$I$. In this section, we define morphisms induced by kernel,
cokernel and AR-translation $\tau$. These morphisms guarantee that
we can use Proposition \ref{Euler}.

\subsection{Morphisms induced by kernel and cokernel}\label{morphismkernel}
Let $(\bbc^{\textbf d}, x)$ and $(\bbc^{\textbf d'}, x')$ be two
$\bbc Q$-modules. In this subsection, for any $f\in
\mathrm{Hom}_{\bbc Q}((\bbc^{\textbf d}, x), (\bbc^{\textbf d'},
x'))$, we will describe $\mathrm{ker}f$, $\mathrm{Im}f$ and
$\mathrm{\mathrm{coker}}f$ under geometric context. The main
barrier is that the underlying spaces for $\mathrm{ker}f$,
$\mathrm{Im}f$ and $\mathrm{\mathrm{coker}}f$ are not of the form
$\bbc^{\textbf e}$ for some dimension vector $\textbf e$. First,
we deal with the case of vector spaces.

Let $\bbc^d$ and $\bbc^{d'}$ be two vector spaces of dimension $d$
and $d'$, respectively. Let $M_{d'\times d}$ be the set of all
matrices of size $d'\times d.$ Then $M_{d'\times d}=\hom(\bbc^d,
\bbc^{d'})$ and $M_{d'\times d}=\bigsqcup_{r}M_{d'\times d}(r)$
where $M_{d'\times d}(r)$ consists of all matrices of rank $r$.
For any $A=(a_{ij})\in M_{d'\times d}(r),$ let us denote the
$r\times r$ submatrix of $A$ formed by the rows $1\leq i_1<\cdots
<i_r\leq d'$ and the columns $1\leq j_1<\cdots <j_r\leq d$ by
$\bigtriangleup_{(i_1,\cdots, i_r; j_1,\cdots, j_r)}(A)$. For
every pair of multi-indices $I=\{i_1,\cdots, i_r\}\subseteq \{1,
\cdots, d'\}$ and $J=\{j_1,\cdots, j_r\}\subseteq \{1, \cdots,
d\},$ we define $M_{d'\times d}(r,I,J)$ to be the subset of
$M_{d'\times d}(r)$ consisting of the matrices $A$ which satisfy
$A\notin M_{d'\times d}(I', J')$ for any $I'<I$ or $I'=I, J'<J$
and $\mathrm{det}\ \bigtriangleup_{(i_1,\cdots, i_r; j_1,\cdots,
j_r)}(A)\neq 0$. Here $I'<I$ is the common lexical order. We have
a finite stratification of $M_{d'\times d}(r)$, i.e.,
$$
M_{d'\times d}(r)=\bigsqcup_{(I,J)}M_{d'\times d}(r,I,J).
$$
In particular, if $d<d'$ and $r=d$, this gives a finite
stratification of the Grassmannian $\mathrm{Gr}_{d}(\bbc^{d'})$
consisting of all $d$-dimensional subspaces of $\bbc^{d'}.$
Indeed, for any $I=\{i_1,\cdots, i_d\}\subset \{1, \cdots, d'\}$
and $J=\{1,\cdots, d\}$, let $M^g_{d'\times d}(I)$ be the subset
of $M_{d'\times d}(d, I,J)$ consisting of the matrices $A$
satisfying that $\bigtriangleup_{(I;J)}(A)$ are identity matrices.
Then there is a finite stratification
$$
\mathrm{Gr}_{d}(\bbc^{d'})=\bigsqcup_{I}M^g_{d'\times d}(I).
$$
For any $A\in M_{d'\times d}(d, I, J)$, we substitute the identity
matrix for the submatrix $\bigtriangleup_{(I;J)}(A)$ and then $A$
corresponds to a unqiue matrix $A'\in M^g_{d'\times d}(I)$.

For every pair of multi-indices $I=\{i_1,\cdots, i_r\}$ and
$J=\{j_1,\cdots, j_r\},$ we will define the following morphism of
varieties:
 $$\Upsilon_{(r, I, J)}^{1}: M_{d'\times d}(r, I,
J)\rightarrow M_{d\times (d-r)}(d-r),$$
$$\hspace{-0.8cm}\Upsilon_{(r, I,
J)}^{2}: M_{d'\times d}(r, I, J)\rightarrow
\mathrm{Gr}_{d-r}(\bbc^{d}),$$
$$\hspace{0.3cm}\Omega_{(r, I, J)}^{1}: M_{d'\times
d}(r, I, J)\rightarrow M_{(d'-r)\times d'}(d'-r), $$ and
$$\hspace{-1.2cm}\Omega_{(r, I, J)}^2: M_{d'\times
d}(r, I, J)\rightarrow \mathrm{Gr}_{r}(\bbc^{d'}).$$ Let
$P_{ij}(k)$ be the elementary matrix of size $k\times k$
transposing the $i$-th row and the $j$-th row. Set
$P_{I}(d')=P_{r,i_r}(d')\cdots P_{1, i_1}(d')$ and
$P_{J}(d)=P_{r,j_r}(d)\cdots P_{1, j_1}(d)$. Then we have
$$P_{I}(d')AP_J(d)\in M_{d'\times d}(r,(1,\cdots,r)(1,\cdots,r))$$ for
any matrix $A\in
M_{d'\times d}(r, I, J)$. The matrix $P_{I}(d')AP_J(d)$ has the form $\left(%
\begin{array}{cc}
  A_1 & A_2 \\
  A_3 & A_4 \\
\end{array}%
\right)$ with  an invertible $r\times r$ matrix $A_1$ and
$A_4=A_3A^{-1}_1A_2=A_2A^{-1}_1A_3.$ The matrix $P_J(d)\left(%
\begin{array}{c}
  -A_1^{-1}A_2 \\
  I_{d-r} \\
\end{array}%
\right)$ determines the solution space $\{x\in \bbc^{d}\mid
Ax=0\}$. The matrix $(-A_3A^{-1}_1, I_{d'-r})P_I(d')$ determines
the solution space $\{x\in \bbc^{d}\mid xA=0\}$.
We define $$\Upsilon^1_{(r, I, J)}(A)=P_J(d)\left(%
\begin{array}{c}
  -A_1^{-1}A_2 \\
  I_{d-r} \\
\end{array}%
\right), $$ $$\Omega^1_{(r, I, J)}(A)=(-A_3A^{-1}_1,
I_{d'-r})P_I(d').$$

Assume that $P_J(d)\left(%
\begin{array}{c}
  -A_1^{-1}A_2 \\
  I_{d-r} \\
\end{array}%
\right)\in M_{d\times (d-r)}(d-r, I', J')$ for some $I'\subset
\{1,\cdots, d\}$ and $J'=(1,\cdots, d-r)$.  Then we define
$\Upsilon^2_{(r, I, J)}(A)$ to be the
unique matrix in $M^g_{d\times (d-r)}(I')$ which the matrix $P_J(d)\left(%
\begin{array}{c}
  -A_1^{-1}A_2 \\
  I_{d-r} \\
\end{array}%
\right)$ corresponds to. Similarly, we define $\Omega^2_{(r, I,
J)}(A)$ to be the unique matrix in $M^g_{d'\times r}(I)$ which the
submatrix $\bigtriangleup_{(1,\cdots, d'; j_1,\cdots, j_r})(A)$ of
$A$ corresponds to. Hence, for any $A\in M_{d'\times d}(r, I, J)$,
we have a long exact sequence of $\bbc$-spaces
$$
\xymatrix{0\ar[r]&\bbc^{d-r}\ar[rr]^{\Upsilon^1_{(r, I,
J)}(A)}&&\bbc^{d}\ar[r]^{A}&\bbc^{d'}\ar[rr]^{\Omega^1_{(r, I,
J)}(A)}&&\bbc^{d'-r}\ar[r]&0}.
$$

  Now, we consider the $\bbc Q$-module. Let $(\bbc^{\textbf d}, x)$ and $(\bbc^{\textbf d'}, x')$ be two $\bbc Q$-modules with dimension vectors $\textbf d$ and $\textbf d'$, respectively.
For any morphism of $\bbc Q$-modules $f: (\bbc^{\textbf d},
x)\rightarrow (\bbc^{\textbf d'}, x')$, we have $f=(f_i)_{i\in Q_0}$
with $f_i\in M_{d'_i\times d_i}$ and
$f_{t(\alpha)}x_{\alpha}=x'_{\alpha}f_{s(\alpha)}$ for any $i\in
Q_0$ and $\alpha\in Q_1$. There is a finite stratification of
$M_{d'_i\times d_i}$ for any $i\in Q_0$ as follows:
$$M_{d'_i\times d_i}=\bigsqcup_{r_i, I_i, J_i}
M_{d'_i\times d_i}(r_i, I_i, J_i).$$ Then $\mathrm{Hom}_{\bbc
Q}((\bbc^{\textbf d},x), (\bbc^{\textbf d'},x'))$ is a closed
subset of
$$
\prod_{i\in Q_0}M_{d'_i\times d_i}=\bigsqcup_{((r_i, I_i,
J_i))_{i\in Q_0}}\prod_{i\in Q_0}M_{d'_i\times d_i}(r_i, I_i,
J_i).
$$
This induces a finite stratification of $\mathrm{Hom}_{\bbc
Q}((\bbc^{\textbf d},x), (\bbc^{\textbf d'},x'))$.

For any $i\in Q_0,$ we fix a pair of multi-indices $I_i=(k_{i1},
\cdots, k_{ir_i})$ and $J_i=(l_{i1}, \cdots, l_{ir_i}).$ We have
the following morphism of varieties:
 $$\Upsilon^1_{((r_i, I_i, J_i))_{i\in Q_0}}: =\prod_{i\in Q_0}\Upsilon^1_{(r_i, I_i, J_i)}: \prod_{i\in Q_0}M_{d'_i\times d_i}(r_i, I_i,
J_i)\rightarrow \prod_{i\in Q_0}M_{d_i\times
(d_i-r_i)}(d_i-r_i),$$
$$\Upsilon^2_{((r_i, I_i, J_i))_{i\in Q_0}}:=\prod_{i\in Q_0}\Upsilon^2_{(r_i, I_i, J_i)}: \prod_{i\in Q_0}M_{d'_i\times d_i}(r_i, I_i, J_i)\rightarrow
\prod_{i\in Q_0}\mathrm{Gr}_{d_i-r_i}(\bbc^{d_i}),$$
$$\Omega^1_{((r_i, I_i, J_i))_{i\in Q_0}}:=\prod_{i\in Q_0}\Omega^1_{(r_i, I_i, J_i)}: \prod_{i\in Q_0}M_{d'_i\times
d_i}(r_i, I_i, J_i)\rightarrow \prod_{i\in Q_0}M_{(d'_i-r_i)\times
d'_i}(d'_i-r_i),
$$ and
$$\Omega^2_{((r_i, I_i, J_i))_{i\in Q_0}}:=\prod_{i\in Q_0}\Omega^2_{(r_i, I_i, J_i)}:  \prod_{i\in Q_0}M_{d'_i\times
d_i}(r_i, I_i, J_i)\rightarrow \prod_{i\in
Q_0}\mathrm{Gr}_{r_i}(\bbc^{d'_i}).$$ Without loss of generality,
we can assume the above $\bbc Q$-module homomorphism $f\in
\prod_{i\in Q_0}M_{d'_i\times d_i}(r_i, I_i, J_i).$ Then we have
$\Upsilon^2_{((r_i, I_i, J_i))_{i\in Q_0}}(f)\in
\mathrm{Gr}_{\textbf d-\textbf r}((\bbc^{\textbf d}, x))$ with
$\textbf r=(r_i)_{i\in Q_0}.$ Indeed, since
$$f_{t(\alpha)}x_{\alpha}(\Upsilon^2_{(r_{s(\alpha)}, I_{s(\alpha)},
J_{s(\alpha)})}(f_{s(\alpha)}))=x'_{\alpha}f_{s(\alpha)}(\Upsilon^2_{(r_{s(\alpha)},
I_{s(\alpha)}, J_{s(\alpha)})}(f_{s(\alpha)}))=0,$$ then
$x_{\alpha}(\Upsilon^2_{(r_{s(\alpha)}, I_{s(\alpha)},
J_{s(\alpha)})}(f_{s(\alpha)}))\in \Upsilon^2_{(r_{t(\alpha)},
I_{t(\alpha)}, J_{t(\alpha)})}(f_{t(\alpha)}).$ In fact,
$$\Upsilon^2_{((r_i, I_i, J_i))_{i\in Q_0}}(f)=\mathrm{ker} f.$$ As
discussed in the case of matrix, for any $i\in Q_0$, we assume
$P_{I_i}(d'_i)f_iP_{J_i}(d_i)$ has the form $\left(%
\begin{array}{cc}
  A_1(f_i) & A_2(f_i) \\
  A_3(f_i) & A_4(f_i) \\
\end{array}%
\right)$ with  an invertible $r_i\times r_i$ matrix $A_1(f_i)$ and
$A_4(f_i)=A_3(f_i)(A_1(f_i))^{-1}A_2(f_i)=A_2(f_i)(A_1(f_i))^{-1}A_3(f_i).$
Then we have a $\bbc Q$-module $(\bbc^{\textbf d-\textbf r}, y)$
isomorphic to $\mathrm{ker} f$ such that
$$
y_{\alpha}=\left(
             \begin{array}{cc}
               0 & I_{d_{t(\alpha)}-r_{t(\alpha)}} \\
             \end{array}
           \right)P_{J_{t(\alpha)}}(d_{t(\alpha)})x_{\alpha}P_{J_{s(\alpha)}}(d_{s(\alpha)})\left(
                                                                \begin{array}{c}
                                                                  -(A_1(f_{s(\alpha)}))^{-1}A_2(f_{s(\alpha)}) \\
                                                                  I_{d_{s(\alpha)}-r_{s(\alpha)}}\\
                                                                \end{array}
                                                              \right).
$$
Now we can write down the following left exact sequence of $\bbc
Q$-modules
$$
\xymatrix{0\ar[r]& (\bbc^{\textbf d-\textbf r},
y)\ar[rrr]^-{\Upsilon^1_{((r_i, I_i, J_i))_{i\in
Q_0}}(f)}&&&(\bbc^{\textbf d}, x)\ar[r]^{f}&(\bbc^{\textbf d'},
x')}.
$$
By similar discussion, we obtain
$$
\Omega^2_{((r_i, I_i, J_i))_{i\in Q_0}}(f)=\mathrm{Im} f.
$$
and a $\bbc Q$-module $(\bbc^{\textbf d'-\textbf r}, y')$ isomorphic
to $\mathrm{coker} f$ satisfying the following right exact sequence
$$
\xymatrix{(\bbc^{\textbf d}, x)\ar[r]^{f}& (\bbc^{\textbf d'},
x')\ar[rrr]^-{\Omega^1_{((r_i, I_i, J_i))_{i\in
Q_0}}(f)}&&&(\bbc^{\textbf d'-\textbf r}, y')\ar[r]& 0}.
$$
Therefore, we have the following proposition.
\begin{Prop}\label{Kernelcokernel}
For any $\bbc Q$-modules  $(\bbc^{\textbf d},x)$ and
$(\bbc^{\textbf d'},x'),$ there are the following maps whose
restriction to stratifications are morphisms of varieties
$$
\Upsilon^1: \mathrm{Hom}_{\bbc Q}((\bbc^{\textbf d},x),
(\bbc^{\textbf d'},x'))\rightarrow \prod_{\textbf
e}\mathrm{Inj}(\bbe_{\textbf e}(Q), (\bbc^{\textbf d},x)),
$$
$$
\hspace{-1cm}\Upsilon^2: \mathrm{Hom}_{\bbc Q}((\bbc^{\textbf
d},x), (\bbc^{\textbf d'},x'))\rightarrow \prod_{\textbf
e}\mathrm{Gr}_{\textbf e}((\bbc^{\textbf d},x)),
$$
$$
\hspace{0.1cm}\Omega^1: \mathrm{Hom}_{\bbc Q}((\bbc^{\textbf
d},x), (\bbc^{\textbf d'},x'))\rightarrow \prod_{\textbf
f}\mathrm{Surj}( (\bbc^{\textbf d'},x'), \bbe_{\textbf f}(Q)),
$$
$$
\hspace{-0.8cm}\Omega^2: \mathrm{Hom}_{\bbc Q}((\bbc^{\textbf
d},x), (\bbc^{\textbf d'},x'))\rightarrow \prod_{\textbf
e'}\mathrm{Gr}_{\textbf e'}((\bbc^{\textbf d'},x')),
$$
where $\mathrm{Inj}(\bbe_{\textbf e}(Q), (\bbc^{\textbf d},x))$ is
the set$$\{((\bbc^{\textbf e}, y), g)\mid g: (\bbc^{\textbf e},
y)\rightarrow (\bbc^{\textbf d}, x) \mbox{ is an injective }\bbc
Q\mbox{-homomorphism }\}$$ and $\mathrm{Surj}( (\bbc^{\textbf
d'},x'), \bbe_{\textbf f}(Q))$ is the set $$\{((\bbc^{\textbf f},
y'), g')\mid g': (\bbc^{\textbf d'}, x')\rightarrow (\bbc^{\textbf
f}, y') \mbox{ is an surjective }\bbc Q\mbox{-homomorphism }\}$$
satisfying that for any $f\in \mathrm{Hom}_{\bbc Q}((\bbc^{\textbf
d},x), (\bbc^{\textbf d'},x')),$ there exists a long exact
sequence of $\bbc Q$-modules
$$
\xymatrix{0\ar[r]&(\bbc^{\textbf e},
y)\ar[r]^-{\Upsilon^1(f)}&(\bbc^{\textbf
d},x)\ar[r]^{f}&(\bbc^{\textbf
d'},x')\ar[r]^-{\Omega^1(f)}&(\bbc^{\textbf f}, y')\ar[r]&0}.
$$
\end{Prop}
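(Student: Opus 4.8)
The proof amounts to assembling the explicit constructions carried out above into a single statement. First I would refine the stratification of $\mathrm{Hom}_{\bbc Q}((\bbc^{\textbf d},x),(\bbc^{\textbf d'},x'))$: recall it is a closed subset of $\prod_{i\in Q_0}M_{d'_i\times d_i}$, so intersecting with the product stratification into pieces $\prod_{i\in Q_0}M_{d'_i\times d_i}(r_i,I_i,J_i)$ gives a finite stratification indexed by the data $((r_i,I_i,J_i))_{i\in Q_0}$. On each such stratum the rank vector $\textbf r=(r_i)_{i\in Q_0}$ of $f$ is constant; put $\textbf e=\textbf d-\textbf r$ and $\textbf f=\textbf d'-\textbf r$. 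On this stratum I declare $\Upsilon^1$ to send $f$ to the pair $((\bbc^{\textbf e},y),\Upsilon^1_{((r_i,I_i,J_i))_{i\in Q_0}}(f))\in\mathrm{Inj}(\bbe_{\textbf e}(Q),(\bbc^{\textbf d},x))$, set $\Upsilon^2(f)=\Upsilon^2_{((r_i,I_i,J_i))_{i\in Q_0}}(f)=\mathrm{ker}\,f\in\mathrm{Gr}_{\textbf e}((\bbc^{\textbf d},x))$, set $\Omega^1(f)=((\bbc^{\textbf f},y'),\Omega^1_{((r_i,I_i,J_i))_{i\in Q_0}}(f))\in\mathrm{Surj}((\bbc^{\textbf d'},x'),\bbe_{\textbf f}(Q))$, and set $\Omega^2(f)=\Omega^2_{((r_i,I_i,J_i))_{i\in Q_0}}(f)=\mathrm{Im}\,f\in\mathrm{Gr}_{\textbf r}((\bbc^{\textbf d'},x'))$, where $y$ and $y'$ are the module structures on $\mathrm{ker}\,f$ and $\mathrm{coker}\,f$ written down explicitly above. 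Since the source is a finite disjoint union of these strata, each of which lands in one component of the disjoint union over $\textbf e$ (resp. $\textbf f$, resp. $\textbf e'$), this defines the four maps on all of $\mathrm{Hom}_{\bbc Q}((\bbc^{\textbf d},x),(\bbc^{\textbf d'},x'))$.

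Second I would check that the restriction of each map to a stratum is a morphism of varieties, after one further refinement. On $\prod_{i\in Q_0}M_{d'_i\times d_i}(r_i,I_i,J_i)$ the permutation matrices $P_{I_i}(d'_i),P_{J_i}(d_i)$ are constant, and the pivot block $A_1(f_i)$ of $P_{I_i}(d'_i)f_iP_{J_i}(d_i)$ satisfies $\det A_1(f_i)\neq 0$ by the very definition of the stratum; hence the entries of $A_1(f_i)^{-1}$, and therefore of $-A_1(f_i)^{-1}A_2(f_i)$, of $-A_3(f_i)A_1(f_i)^{-1}$, and of the resulting module structures $y_\alpha,y'_\alpha$, are regular functions of $f$. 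The subsequent passage to the normalized echelon representative in $M^g_{d\times(d-r)}(I')$ (resp. in $M^g_{d'\times r}(I_i)$) becomes a morphism once one intersects the stratum with the preimage of a fixed echelon cell $M_{d\times(d-r)}(d-r,I',J')$ under $\Upsilon^1_{((r_i,I_i,J_i))_{i\in Q_0}}$, using the finite echelon stratification of $M_{d\times(d-r)}(d-r)$ recalled above; on this refined piece substituting the identity submatrix is again regular. Thus each of $\Upsilon^1,\Upsilon^2,\Omega^1,\Omega^2$ restricts to a morphism on every piece of the refined stratification, which is exactly what the statement asserts.

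Third I would verify the long exact sequence. By construction $\Upsilon^1_{((r_i,I_i,J_i))_{i\in Q_0}}(f)$ is an injective $\bbc Q$-homomorphism $(\bbc^{\textbf e},y)\hookrightarrow(\bbc^{\textbf d},x)$ with image $\Upsilon^2_{((r_i,I_i,J_i))_{i\in Q_0}}(f)=\mathrm{ker}\,f$ — the module-compatibility, namely that $x_\alpha$ carries $\Upsilon^2_{(r_{s(\alpha)},I_{s(\alpha)},J_{s(\alpha)})}(f_{s(\alpha)})$ into $\Upsilon^2_{(r_{t(\alpha)},I_{t(\alpha)},J_{t(\alpha)})}(f_{t(\alpha)})$, was checked above using $f_{t(\alpha)}x_\alpha=x'_\alpha f_{s(\alpha)}$. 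Dually $\Omega^1_{((r_i,I_i,J_i))_{i\in Q_0}}(f)$ is a surjective $\bbc Q$-homomorphism $(\bbc^{\textbf d'},x')\twoheadrightarrow(\bbc^{\textbf f},y')$ with kernel $\mathrm{Im}\,f=\Omega^2_{((r_i,I_i,J_i))_{i\in Q_0}}(f)$. Splicing the left exact sequence $0\to(\bbc^{\textbf e},y)\xrightarrow{\Upsilon^1(f)}(\bbc^{\textbf d},x)\xrightarrow{f}(\bbc^{\textbf d'},x')$ with the right exact sequence $(\bbc^{\textbf d},x)\xrightarrow{f}(\bbc^{\textbf d'},x')\xrightarrow{\Omega^1(f)}(\bbc^{\textbf f},y')\to 0$ along $\mathrm{Im}\,f$ gives the asserted long exact sequence, and all of this is natural in $f$ over each stratum.

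The main obstacle is the second step: one must arrange the stratification fine enough that every rational expression occurring — the entries of $A_1(f_i)^{-1}$, and then the pivot minors governing the echelon normalizations of $\Upsilon^1_{((r_i,I_i,J_i))_{i\in Q_0}}(f)$ and of $\Omega^2_{((r_i,I_i,J_i))_{i\in Q_0}}(f)$ — has non-vanishing denominator on the relevant piece, so that \emph{kernel}, \emph{image} and \emph{cokernel}, \emph{together with their module structures}, depend algebraically on $f$. Once the bookkeeping of these nested finite stratifications is in place, the remaining verifications are the elementary linear algebra already spelled out above.
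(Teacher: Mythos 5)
Your proposal reconstructs exactly the argument the paper itself gives: the proposition is stated as a summary of the preceding construction in Section~\ref{morphismkernel}, and your three steps (stratify $\prod_i M_{d'_i\times d_i}$ by rank/pivot data, observe that the pivot block inversion and echelon normalization are regular on each refined stratum, splice the left and right exact sequences along $\mathrm{Im}\,f$) are precisely what the paper does. Your added care in making explicit the second refinement of the stratification — intersecting with preimages of fixed echelon cells so that $\Upsilon^2$, $\Omega^2$ become genuine morphisms — is a useful sharpening of a point the paper leaves implicit (it merely writes ``Assume that \ldots $\in M_{d\times(d-r)}(d-r,I',J')$''), but the approach is the same.
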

By the proof of Proposition \ref{Kernelcokernel}, we also have the
following corollary.
\begin{Cor}\label{grassmanian-submodule}
For any $\bbc Q$-modules  $(\bbc^{\textbf d},x)$ and dimension
vector $\textbf e$, there are the following maps whose restriction
to stratifications are morphisms of varieties
$$
\Upsilon_0: \mathrm{Gr}_{\textbf e}((\bbc^{\textbf d},
x))\rightarrow \bbe_{\textbf e}(Q) \quad \mbox{ and } \quad
\Omega_0: \mathrm{Gr}_{\textbf e}((\bbc^{\textbf d},
x))\rightarrow \bbe_{\textbf d-\textbf e}(Q)
$$
such that for any $M\in \mathrm{Gr}_{\textbf e}((\bbc^{\textbf d},
x))$, as the $\bbc Q$-modules, $\Upsilon_0(M)\cong M$ and
$\Omega_0(M)\cong (\bbc^{\textbf d},x)/M$.
\end{Cor}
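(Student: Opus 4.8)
The plan is to deduce Corollary \ref{grassmanian-submodule} as a degenerate special case of the construction carried out in the proof of Proposition \ref{Kernelcokernel}. Recall that in that proof, given $f\colon (\bbc^{\textbf d},x)\to(\bbc^{\textbf d'},x')$ we stratified $\mathrm{Hom}_{\bbc Q}((\bbc^{\textbf d},x),(\bbc^{\textbf d'},x'))$ and, on each stratum indexed by $((r_i,I_i,J_i))_{i\in Q_0}$, built an explicit $\bbc Q$-module $(\bbc^{\textbf d-\textbf r},y)$ isomorphic to $\ker f$ together with the injection $\Upsilon^1(f)$, and an explicit $(\bbc^{\textbf d'-\textbf r},y')$ isomorphic to $\Coker f$ with the surjection $\Omega^1(f)$; all the data $y$, $y'$, $\Upsilon^1(f)$, $\Omega^1(f)$ were given by polynomial formulas in the entries of $f$, hence the restrictions to strata are morphisms of varieties. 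To obtain the corollary I will apply this to a tautological family of inclusions rather than to a single homomorphism.

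First I would observe that $\mathrm{Gr}_{\textbf e}((\bbc^{\textbf d},x))$ is, by definition, a locally closed (indeed closed) subvariety of $\prod_{i\in Q_0}\mathrm{Gr}_{e_i}(\bbc^{d_i})$, and carries its own finite stratification inherited from the stratifications $\mathrm{Gr}_{e_i}(\bbc^{d_i})=\bigsqcup_{I_i}M^g_{d_i\times e_i}(I_i)$ introduced just before the statement of Proposition \ref{Kernelcokernel}. On each such stratum a point $M$ is recorded by a tuple of matrices $(g_i)_{i\in Q_0}$ with $g_i\in M^g_{d_i\times e_i}(I_i)$ of full rank $e_i$, and the condition that the spanned subspaces form a subrepresentation is a closed polynomial condition on the $g_i$. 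Then I would set $\Upsilon_0(M)$ to be the $\bbc Q$-module structure on $\bbc^{\textbf e}$ transported from the subspace $M\subseteq (\bbc^{\textbf d},x)$ via the chosen basis $g=(g_i)$: concretely, for each arrow $\alpha$ one solves $g_{t(\alpha)}\,(\Upsilon_0(M))_\alpha = x_\alpha\, g_{s(\alpha)}$, which has a unique solution given by a fixed minor inversion because $g_{t(\alpha)}$ has full column rank and $x_\alpha g_{s(\alpha)}$ already lands in the column span of $g_{t(\alpha)}$ (this is exactly the computation displayed for $y_\alpha$ in the proof of Proposition \ref{Kernelcokernel}, with the roles of $I_i,J_i$ played by the Grassmann strata indices). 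This is polynomial in the entries of $g$, so $\Upsilon_0$ restricted to each stratum is a morphism of varieties into $\bbe_{\textbf e}(Q)$, and by construction $\Upsilon_0(M)\cong M$ as $\bbc Q$-modules.

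Second, for $\Omega_0$ I would run the cokernel half of the same machine: the inclusion $M\hookrightarrow(\bbc^{\textbf d},x)$ is a $\bbc Q$-homomorphism of rank $\textbf e$, so applying $\Omega^1$ and the explicit module $(\bbc^{\textbf d-\textbf e},y')$ from Proposition \ref{Kernelcokernel} to this inclusion produces a $\bbc Q$-module isomorphic to $\Coker(M\hookrightarrow(\bbc^{\textbf d},x))=(\bbc^{\textbf d},x)/M$, again by polynomial formulas in the entries of $g$ (one uses the complementary minor $(-g\text{-block}\cdot(\text{invertible block})^{-1},\,I)$ to write down the quotient structure). Hence $\Omega_0$ restricted to each stratum is a morphism of varieties into $\bbe_{\textbf d-\textbf e}(Q)$ with $\Omega_0(M)\cong(\bbc^{\textbf d},x)/M$. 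Finally I would note that since both $\mathrm{Gr}_{\textbf e}((\bbc^{\textbf d},x))$ and the target affine spaces decompose into the finitely many strata compatibly, the stratumwise morphisms glue into the asserted maps.

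The main obstacle I anticipate is purely bookkeeping rather than conceptual: one must be careful that the stratification of $\mathrm{Gr}_{\textbf e}((\bbc^{\textbf d},x))$ used is fine enough that on each piece the relevant $e_i\times e_i$ minor of $g_i$ is a \emph{fixed} invertible submatrix (so that its inverse, and hence all the transported structure constants, are regular functions and not merely rational ones), and that the subrepresentation condition is respected strata-by-strata. Checking that the formulas produced are genuinely independent of the residual choices in $M^g_{d_i\times e_i}(I_i)$ — i.e.\ that $\Upsilon_0$ and $\Omega_0$ are well defined as maps of sets before worrying about morphism structure — is the one place that requires a short verification, but it is exactly the verification already performed inside the proof of Proposition \ref{Kernelcokernel}, so no new ideas are needed.
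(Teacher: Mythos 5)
Your proposal is correct and follows essentially the same route as the paper: the paper's entire proof is the single sentence ``By the proof of Proposition \ref{Kernelcokernel}, we also have the following corollary,'' and you have simply spelled out how the stratumwise polynomial formulas for kernel and cokernel data from that proof, applied to the tautological family of inclusions $M\hookrightarrow(\bbc^{\textbf d},x)$, produce $\Upsilon_0$ and $\Omega_0$. The bookkeeping remarks at the end (fixed invertible minor on each Grassmann stratum, compatibility with the subrepresentation condition) are exactly the checks implicit in the paper's terse citation, so no new ideas are introduced.
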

\subsection{Morphisms induced by AR-translation $\tau$}
In this subsection, we will describe the Auslander-Reiten translation $\tau$ under geometric context.
Let $\Phi^{+}, \Phi^{-}$ be the Coxeter functors introduced by
Bernsetein, Gelfand and Ponomarev. We denote by $T$ the
endofunctor of $\mathrm{mod} \bbc Q$ sending $(\bbc^{\textbf d},
x)$ to $(\bbc^{\textbf d}, -x)$. Then the functor $T\Phi^{+}$ on
$\mathrm{mod} \bbc Q$ is just the AR-translation $\tau$ on
$\mathrm{mod} \bbc Q$.

Given any $\bbc Q$-module $(\bbc^{\textbf d}, x)\in \bbe_{\textbf
d}$, the representation $$\Phi^{+}(\bbc^{\textbf d},
x):=(\bbc^{\textbf e}, y)$$ can be constructed inductively as
described in \cite{Ringel1998}. Let us recall it. Since $Q$ is
acyclic, one can define a partial order on $Q_0$ such that for any
arrow $\beta,$ $s(\beta)>t(\beta).$ Assume that the dimension
$e_j$ with $j<i$, the linear maps $h_{\beta}:
\bbc^{e_{t(\beta)}}\rightarrow \bbc^{d_{s(\beta)}}$ for all
$\beta\in Q_1$ with $s(\beta)\leq i$ and the maps $y_{\alpha}:
\bbc^{e_{s(\alpha)}}\rightarrow \bbc^{e_{t(\alpha)}}$ for all
$\alpha\in Q_1$ with $s(\alpha)<i$ are defined. Then we have the
sequence
\begin{equation}\label{BGP}
\xymatrix{0\ar[r]& \bbc^{e_i}\ar[rr]^-{\Upsilon^1((x_{\alpha},
h_{\beta})_{\alpha,\beta})}&& \bigoplus_{t(\alpha)=i}
\bbc^{d_{s(\alpha)}}\oplus\bigoplus_{s(\beta)=i}\bbc^{e_{t(\beta)}}\ar[rr]^-{(x_{\alpha},
h_{\beta})_{\alpha,\beta}}&& \bbc^{d_{i}}}
\end{equation}
where $e_i$ is the dimension of the kernel of the map
$(x_{\alpha}, h_{\beta})_{\alpha,\beta}$ and $\Upsilon^1$ was
defined as in Section 2.1. Now we define the map $y_{\beta}:
\bbc^{e_{s(\beta)}}\rightarrow \bbc^{e_{t(\beta)}}$ for any
$\beta$ with $s(\beta)=i$ to be the composition
$$
\xymatrix{\bbc^{e_i}\ar[rr]^-{\Upsilon^1((x_{\alpha},
h_{\beta})_{\alpha,\beta})}&& \bigoplus_{t(\alpha)=i}
\bbc^{d_{s(\alpha)}}\oplus\bigoplus_{s(\beta)=i}\bbc^{e_{t(\beta)}}\ar[r]&
\bbc^{e_{t(\beta)}}}
$$
where the second map is the projection. Inductively we obtain the
representation $\tau(\bbc^{\textbf d}, x)=T\Phi^{+}(\bbc^{\textbf
d}, x)=(\bbc^{\textbf e}, -y)$. We write $\tau(\textbf d)=\textbf
e.$ The geometric construction of $\Phi^{-}$ (also $\tau^{-}$) is
similar. For any $\bbc Q$-module $(\bbc^{\textbf d}, x)\in
\bbe_{\textbf d},$ let $(\bbc^{\textbf d^{+}}, x^{+})$ and
$P_0(\textbf d, x)$ be its maximal non-projective summand and the
maximal projective summand, respectively, i.e., $(\bbc^{\textbf
d}, x)=(\bbc^{\textbf d^{+}}, x^{+})\oplus P_0(\textbf d, x)$
satisfying that $(\bbc^{\textbf d^{+}}, x^{+})$ contains no
projective summands and $P_0$ is projective. In fact, we have the
isomorphism of $\bbc Q$-modules
$$
\tau^{-1}\tau (\bbc^{\textbf d}, x)\cong (\bbc^{\textbf d^{+}},
x^{+}).
$$

We can explicitly write down the submodule $(V, x):=\tau^{-1}\tau
(\bbc^{\textbf d}, x)$ of $(\bbc^{\textbf d}, x)$. The space $V_i$
is just the image of $(x_\alpha, h_{\beta})_{\alpha, \beta}$ for
any $i\in Q_0$ in the sequence \eqref{BGP}. Indeed, Dually, we
denote by $(\bbc^{\textbf d^{-}}, x^{-})$ and $I_0(\textbf d,x)$
the maximal non-injective summand and the maximal injective
summand of $(\bbc^{\textbf d}, x),$ respectively. Then
$$
\tau\tau^{-1} (\bbc^{\textbf d}, x)\cong (\bbc^{\textbf d^{-}},
x^{-}).
$$
The above construction and its duality induces the following two
propositions.

\begin{Prop}\label{AR1}
For any dimension vector $\textbf d$,  there exists a morphism of
varieties
$$
\phi^{+}: \bbe_{\textbf d}\rightarrow \prod_{\tau(\textbf
d^{+})}\bbe_{ \tau(\textbf d^{+})}
$$
such that $\phi^{+}((\bbc^{\textbf d}, x))=\tau(\bbc^{\textbf d},
x).$
\end{Prop}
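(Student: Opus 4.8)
The plan is to make explicit the stratification of the affine space $\bbe_{\textbf d}$ that renders the Bernstein-Gelfand-Ponomarev construction of $\Phi^+$ (and hence of $\tau = T\Phi^+$) a morphism of varieties on each stratum, and then to assemble these pieces into a single map into the disjoint union $\bigsqcup_{\textbf e}\bbe_{\textbf e}(Q)$ (here written $\prod$ to match the notation of Proposition \ref{Kernelcokernel}). First I would fix the partial order on $Q_0$ coming from acyclicity of $Q$, so that the inductive construction in \eqref{BGP} is well-posed, and I would proceed vertex by vertex in this order. At the $i$-th stage one has already produced the spaces $\bbc^{e_j}$ for $j<i$, the maps $h_\beta\colon \bbc^{e_{t(\beta)}}\to\bbc^{d_{s(\beta)}}$ for $s(\beta)\le i$, and the maps $y_\alpha$ for $s(\alpha)<i$; the new datum is the kernel of the assembled map $(x_\alpha,h_\beta)_{\alpha,\beta}\colon \bigoplus_{t(\alpha)=i}\bbc^{d_{s(\alpha)}}\oplus\bigoplus_{s(\beta)=i}\bbc^{e_{t(\beta)}}\to\bbc^{d_i}$, together with the maps $y_\beta$ for $s(\beta)=i$ obtained by composing the inclusion of this kernel with the relevant projections.

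The key point is that this kernel, as a \emph{subspace}, is exactly what $\Upsilon^2$ computes, and the chosen representative of it in $\bbe_{\textbf e}(Q)$-coordinates is what $\Upsilon^1$ computes — both of which, by Proposition \ref{Kernelcokernel} (more precisely by the explicit rank-stratified formulas $\Upsilon^1_{(r,I,J)}, \Upsilon^2_{(r,I,J)}$ of Section \ref{morphism}), are morphisms of varieties once we fix the rank $r$ and the pivot multi-indices $(I,J)$ of the map $(x_\alpha,h_\beta)_{\alpha,\beta}$. So at stage $i$ I would stratify the locus already cut out according to the combinatorial type $(r_i,I_i,J_i)$ of that assembled map; on each such stratum the new dimension $e_i$ is constant, the inclusion of the kernel is given by $\Upsilon^1_{(r_i,I_i,J_i)}$ as a polynomial (in fact rational, with non-vanishing denominators on the stratum) expression in the entries of $x$ and of the previously constructed $h_\beta$'s, and the new maps $y_\beta$ are obtained from it by coordinate projection — hence polynomial/rational as well. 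Iterating over all vertices, I obtain a finite stratification of $\bbe_{\textbf d}$, indexed by a tuple of combinatorial types, one per vertex, on each piece of which the assignment $(\bbc^{\textbf d},x)\mapsto \Phi^+(\bbc^{\textbf d},x) = (\bbc^{\textbf e},y)$ is a morphism of varieties into the corresponding $\bbe_{\textbf e}(Q)$. Composing with the sign-twist functor $T$, which acts by $y\mapsto -y$ and is visibly a morphism, gives $\phi^+$ on each stratum with the required property $\phi^+((\bbc^{\textbf d},x)) = \tau(\bbc^{\textbf d},x)$; since $\tau(\textbf d^+)$ is constant on each stratum, the target $\bigsqcup_{\tau(\textbf d^+)}\bbe_{\tau(\textbf d^+)}$ is the correct one.

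The main obstacle I expect is bookkeeping rather than conceptual: one must check that the combinatorial type chosen at stage $i$ depends only on data already constructed (so that the stratification is genuinely built up inductively and each stratum is locally closed), and that the auxiliary maps $h_\beta$ introduced at one stage and used at a later stage vary algebraically along the strata — this follows because each $h_\beta$ is itself a composite of an $\Upsilon^1$-map with projections, but threading this through several vertices requires care. A second, minor point is that the resulting partition of $\bbe_{\textbf d}$ is a priori only a finite partition into constructible sets, not into locally closed ones; but each piece is an intersection of the (locally closed) rank-and-pivot strata $\prod_i M_{d'_i\times d_i}(r_i,I_i,J_i)$-type loci with the closed conditions defining $\bbc Q$-module homomorphisms, so refining if necessary one gets a stratification by locally closed subsets, which is all that is needed for the phrase ``whose restriction to stratifications are morphisms of varieties.'' Everything else is a direct transcription of the explicit formulas of Section \ref{morphism} into the inductive setup of \eqref{BGP}.
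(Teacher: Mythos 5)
Your proposal is correct and follows essentially the same route as the paper: it unwinds the inductive Bernstein--Gelfand--Ponomarev construction of $\Phi^{+}$ in \eqref{BGP} using the rank-and-pivot stratifications and the maps $\Upsilon^1_{(r,I,J)}$ from Section~\ref{morphism}, then composes with the sign twist $T$. The paper states Proposition~\ref{AR1} as an immediate consequence of that construction without spelling out the inductive stratification and the algebraicity of the auxiliary maps $h_\beta$; your version makes those bookkeeping points explicit but contains no new idea and no deviation from the intended argument.
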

Dually, we have
\begin{Prop}\label{AR2}
For any dimension vector $\textbf d$,  there exists a morphism of
varieties
$$
\phi^{-}: \bbe_{\textbf d}\rightarrow \prod_{\tau^{-1}(\textbf
d^{-})}\bbe_{ \tau^{-1}(\textbf d^{-})}
$$
such that $\phi^{-}((\bbc^{\textbf d}, x))=\tau^{-1}(\bbc^{\textbf
d}, x).$
\end{Prop}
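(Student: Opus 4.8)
The plan is to obtain $\phi^{-}$ by dualizing the construction that yields $\phi^{+}$ in Proposition~\ref{AR1}, rather than redoing the inductive BGP argument from scratch. Concretely, recall that $\tau^{-1}$ on $\mathrm{mod}\,\bbc Q$ is the functor $T\Phi^{-}$, where $\Phi^{-}$ is the inverse Coxeter functor of Bernstein--Gelfand--Ponomarev and $T$ is the sign-change endofunctor sending $(\bbc^{\textbf d},x)$ to $(\bbc^{\textbf d},-x)$. The functor $\Phi^{-}$ can be built inductively exactly as $\Phi^{+}$ in \eqref{BGP}, but now running through $Q_0$ in the \emph{reverse} partial order (so that sources are treated last) and replacing kernels of the maps $(x_\alpha,h_\beta)$ by cokernels of the corresponding maps into $\bigoplus_{s(\alpha)=i}\bbc^{d_{t(\alpha)}}\oplus\bigoplus_{t(\beta)=i}\bbc^{e_{s(\beta)}}$; at each stage the cokernel is presented via the morphism $\Omega^{1}$ of Section~\ref{morphismkernel} instead of $\Upsilon^{1}$. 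Thus at every inductive step the newly produced linear data depend on the input through the maps $\Omega^{1}_{(r,I,J)}$ and $\Omega^{2}_{(r,I,J)}$, which by the analysis preceding Proposition~\ref{Kernelcokernel} are morphisms of varieties on each stratum.

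The first step, then, is to make the inductive description of $\Phi^{-}$ explicit and to check that it is compatible with the stratification of $\bbe_{\textbf d}$ into the pieces cut out by fixing, at each vertex and each inductive stage, the rank and pivot data $(r,I,J)$ of the relevant matrix. On each such stratum the output dimension vector $\tau^{-1}(\textbf d^{-})$ is constant, and the entries of the resulting representation $(\bbc^{\tau^{-1}(\textbf d^{-})},-y')$ are given by polynomial (in fact rational-but-regular-on-the-stratum) formulas in the entries of $x$, obtained by composing the closed-form expressions for $\Omega^{1}$, $\Omega^{2}$ with the structure maps $x_\alpha$; precomposition with $T$ only flips signs and changes nothing. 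Since $\bbe_{\textbf d}$ is a finite disjoint union of such locally closed strata (Corollary~\ref{partition}-type finiteness of the pivot stratification), the maps glue to a single map $\phi^{-}$ whose restriction to each stratum is a morphism of varieties, landing in $\coprod_{\tau^{-1}(\textbf d^{-})}\bbe_{\tau^{-1}(\textbf d^{-})}$.

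The second step is the identification $\phi^{-}((\bbc^{\textbf d},x))=\tau^{-1}(\bbc^{\textbf d},x)$. This is precisely the statement, recalled just before Proposition~\ref{AR1}, that $T\Phi^{-}$ computes the Auslander--Reiten translation $\tau^{-1}$ on $\mathrm{mod}\,\bbc Q$ (with $\tau^{-1}I=0$ for injective $I$, consistent with our convention), together with the fact that the inductive cokernel construction really does compute $\Phi^{-}$ — this is the dual of the $\Phi^{+}$ recollection from \cite{Ringel1998}, and dually one has $\tau\tau^{-1}(\bbc^{\textbf d},x)\cong(\bbc^{\textbf d^{-}},x^{-})$, so no information is lost on the non-injective part. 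I expect the only genuinely delicate point to be bookkeeping: ensuring that the pivot stratification used for $\Phi^{-}$ is a common refinement across all inductive stages so that the target dimension vector is locally constant, and that the finitely-many-strata claim is legitimate (it is, since at each of the finitely many stages there are finitely many rank-pivot strata, and one refines along the finitely many stages). Once that is arranged, everything else is a routine transcription of Section~\ref{morphismkernel} with $\ker$ replaced by $\Coker$ and $\Upsilon^{i}$ by $\Omega^{i}$, followed by applying $T$; the hard analytic content has already been absorbed into Proposition~\ref{Kernelcokernel}.
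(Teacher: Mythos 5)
Your proposal is correct and takes essentially the same approach as the paper: the paper dispatches Proposition~\ref{AR2} with the single word ``Dually'' after establishing Proposition~\ref{AR1}, and your argument is precisely a careful unpacking of that duality, replacing the kernel data $\Upsilon^{1},\Upsilon^{2}$ in the inductive BGP construction by the cokernel data $\Omega^{1},\Omega^{2}$ from Section~\ref{morphismkernel} and applying the sign functor $T$. (One small bookkeeping caveat: for $\Phi^{-}$ the induction should start at sources rather than end there, but this does not affect the substance of the argument.)
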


Let $f=(f_i)_{i\in Q_0}: (\bbc^{\textbf d}, x)\rightarrow
(\bbc^{\textbf d'}, x')$ be any morphism of $\bbc Q$-modules. Let
$\Phi^{+}((\bbc^{\textbf d}, x))=(\bbc^{\textbf e}, y)$ and
$\Phi^{+}((\bbc^{\textbf d'}, x'))=(\bbc^{\textbf e'}, y').$ Then
we can inductively construct the maps $g=(g_i)_{i\in Q_0}:
(\bbc^{\textbf e}, y)\rightarrow (\bbc^{\textbf e'}, y')$ by the
following commutative diagram:
$$
\xymatrix{0\ar[r]& \bbc^{e_i}\ar[d]^{g_i}\ar[rr]^-{(h_{\alpha},
y_{\beta})_{\alpha,\beta}}&& \bigoplus_{t(\alpha)=i}
\bbc^{d_{s(\alpha)}}\oplus\bigoplus_{s(\beta)=i}\bbc^{e_{t(\beta)}}\ar[d]^{\oplus
f_{s(\alpha)}\oplus g_{t(\beta)}}\ar[rr]^-{(x_{\alpha},
h_{\beta})_{\alpha,\beta}}&& \bbc^{d_{i}}\ar[d]^{f_i}\\
0\ar[r]& \bbc^{e'_i}\ar[rr]^-{(h'_{\alpha},
y'_{\beta})_{\alpha,\beta}}&& \bigoplus_{t(\alpha)=i}
\bbc^{d'_{s(\alpha)}}\oplus\bigoplus_{s(\beta)=i}\bbc^{e'_{t(\beta)}}\ar[rr]^-{(x_{\alpha},
h_{\beta})_{\alpha,\beta}}&& \bbc^{d'_{i}}}
$$
The commutativity of the diagram guarantees that $g=(g_i)$ is the
morphism of $\bbc Q$-modules. Hence, by using Proposition
\ref{Kernelcokernel}, Corollary \ref{grassmanian-submodule},
Proposition \ref{AR1} and \ref{AR2}, we have the following result.
\begin{Prop}\label{AR-grassmanian}
For any dimension vectors $\textbf d$,  there exists a morphism of
varieties
$$
\xymatrix{\mathrm{Gr}_{\textbf e}((\bbc^{\textbf
d},x))\ar[r]^-{g_{\tau}}& \mathrm{Gr}_{\tau(\textbf
e)}((\bbc^{\tau(\textbf d)},\tau x))}
$$

\end{Prop}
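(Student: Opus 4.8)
The idea is to build the map $g_\tau$ by composing the morphisms of varieties already constructed in this section, tracking what each does on submodules. Given $(\bbc^{\textbf d},x)$ and a submodule $M\in\mathrm{Gr}_{\textbf e}((\bbc^{\textbf d},x))$, let $\iota:M\hookrightarrow(\bbc^{\textbf d},x)$ be the inclusion. First I would apply Corollary \ref{grassmanian-submodule}: the morphism $\Upsilon_0$ sends $M$ to a point of $\bbe_{\textbf e}(Q)$ presenting $M$ as a module $(\bbc^{\textbf e},x^M)$ on the standard underlying space, and simultaneously records the injective homomorphism $\iota$ (on fixed coordinates) as a point of $\mathrm{Hom}_{\bbc Q}((\bbc^{\textbf e},x^M),(\bbc^{\textbf d},x))$. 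The point is that after passing to a stratification of $\mathrm{Gr}_{\textbf e}((\bbc^{\textbf d},x))$ — the one induced by the stratification of the relevant matrix spaces in Section \ref{morphismkernel} — all of these assignments are genuine morphisms of varieties, by Proposition \ref{Kernelcokernel} and Corollary \ref{grassmanian-submodule}.

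Next I would apply the functor $\tau=T\Phi^+$ to the short exact sequence $0\to M\to(\bbc^{\textbf d},x)\to (\bbc^{\textbf d},x)/M\to 0$, or rather to the inclusion $\iota$. By Proposition \ref{AR1} (applied to both $\textbf d$ and $\textbf e$) the assignments $(\bbc^{\textbf d},x)\mapsto\tau(\bbc^{\textbf d},x)$ and $(\bbc^{\textbf e},x^M)\mapsto\tau(\bbc^{\textbf e},x^M)$ are morphisms of varieties on each stratum. For the morphism $\iota$ itself, the inductive commutative diagram displayed just before the statement produces $\Phi^+(\iota)=(g_i)_{i\in Q_0}$; each $g_i$ is obtained from the previous data by the kernel/image construction $\Upsilon^1,\Upsilon^2$ of Section \ref{morphismkernel}, hence depends morphically on the stratum. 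Applying $T$ (which is just a sign change, certainly a morphism) we get $\tau(\iota):\tau(\bbc^{\textbf e},x^M)\to\tau(\bbc^{\textbf d},x)$. Since $\tau$ is left exact on $\mathrm{mod}\,\bbc Q$ — equivalently $\Phi^+$ preserves monomorphisms, which is exactly what the left exact sequence \eqref{BGP} encodes — $\tau(\iota)$ is again injective, and its image is a submodule of $\tau(\bbc^{\textbf d},x)$ of dimension vector $\tau(\textbf e)$ (here one uses $\textbf{dim}\,\tau M=\tau(\textbf{dim}\,M)$ when $M$ has no projective summand; the projective summands of $M$ are killed and contribute $0$, consistent with our convention $\tau P=0$ in this section). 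Feeding $\tau(\iota)$ into $\Upsilon^2$ of Proposition \ref{Kernelcokernel} yields the desired point $g_\tau(M)\in\mathrm{Gr}_{\tau(\textbf e)}((\bbc^{\tau(\textbf d)},\tau x))$, and by construction this is a morphism of varieties on each stratum; gluing over the finite stratification gives the map $g_\tau$ as claimed.

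The main obstacle, and the point deserving the most care, is the functoriality of the Bernstein–Gelfand–Ponomarev construction in families: one must check that the inductively defined maps $g_i$ in the displayed diagram really do assemble, stratum by stratum, into a morphism of varieties, and in particular that the "kernel dimension" $e_i$ (hence the relevant stratum of $\bbe_{\textbf d}$) is locally constant so that the induction can be carried out uniformly. This is where the stratifications $M_{d'\times d}(r,I,J)$ of Section \ref{morphismkernel} do the real work: on each such piece the rank, the index sets, and the explicit formulas for $\Upsilon^1,\Upsilon^2$ are fixed, so the composites are polynomial (or regular) maps. A secondary, more bookkeeping-type issue is the handling of projective direct summands of $M$ and of $(\bbc^{\textbf d},x)$: since $\tau$ annihilates them, one should either restrict attention to the locus where the maximal projective summand has a fixed dimension vector, or absorb this into the stratification via Proposition \ref{AR1}, which already ranges over the possible values of $\tau(\textbf d^+)$. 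Once these stratification bookkeeping points are settled, everything else is a formal concatenation of the morphisms of varieties established in Propositions \ref{Kernelcokernel}, \ref{AR1}, \ref{AR2} and Corollary \ref{grassmanian-submodule}.
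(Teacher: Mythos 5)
Your proposal is correct and follows essentially the same route as the paper: composing $\Upsilon_0$ from Corollary \ref{grassmanian-submodule}, the stratumwise functoriality of the BGP construction encoded in the displayed commutative diagram, and the kernel/image morphisms of Proposition \ref{Kernelcokernel}, with left exactness of $\Phi^{+}$ giving injectivity of $\tau(\iota)$. You make explicit two points the paper leaves implicit — that the rank stratification is what makes the inductive $g_i$'s regular in families, and that projective direct summands must be absorbed into the stratification as in Propositions \ref{AR1} and \ref{AR2} — but these are elaborations, not deviations.
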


\section{High order associativity }\label{highassociativity}
In \cite{Toen2005}, To\"en associated an associative algebra
(called the derived Hall algebra) to a dg category over a finite
field $k$. In particular, we can define the derived Hall algebra
$\md\mh(Q)$ for the derived category $\md^b(\mathrm{mod} kQ)$. Let
$u_X$ denotes the isomorphism class of $X\in \md^b(\mathrm{mod}
kQ).$ The algebra $\md\mh(Q)$ is an associative algebra generated
by $u_X$ for any $X\in \md^b(\mathrm{mod} kQ)$. The associative
multiplication contains a non-trivial case as follows. For any
$L_1, L_2\in \mod kQ,$ we have
$$
u_{L_2}*u_{L_1[1]}=\sum_{[\theta], \theta: L_1\rightarrow
L_2}g_{L_1[1]L_2}^{K[1]\oplus C}u_{K[1]\oplus C}
$$
where $g_{L_1[1]L_2}^{K[1]\oplus C}\in \bbq$ is called the derived
Hall number and $K=\mathrm{ker}\theta, C=\mathrm{coker}\theta$ and
$[\theta]$ is the equivalence class of $\theta.$ Here $\theta_1$ is
equivalent to $\theta_2$ if there exist automorphisms $a_{L_1}$ and
$a_{L_2}$ such that $\theta_1 a_{L_1}=a_{L_2}\theta_2.$ The above
equation implies the following exact sequence
$$
\xymatrix{0\ar[r]&K\ar[r]&L_1\ar[r]^{\theta}& L_2\ar[r]& C\ar[r]&0}.
$$

By the associativity of the multiplication of $\md\mh(Q)$, we have
$$
u_{L_2}*(u_{K_1[1]}*u_{L_1[1]})=(u_{K_1[1]}*u_{L_2})*u_{L_1[1]}=u_{K_1[1]}*(u_{L_2}*u_{L_1[1]})
$$
for any $kQ$-modules $K_1, L_1, L_2$ and
$$
(u_{L_2}*u_{L_1[1]})*u_{C_2}=u_{L_2}*(u_{L_1[1]}*u_{C_2})
$$
for any $kQ$-modules $C_2, L_1, L_2$. These two equations can be
illustrated by the following commutative diagrams.
$$
\xymatrix{&K_1\ar[d]\ar@{=}[r]& K_1\ar[d]&&&\\
0\ar[r]&K\ar[d]\ar[r]&L\ar[d]\ar[r]&L_2\ar@{=}[d]\ar[r]& C
\ar@{=}[d]\ar[r]&
0\\
0\ar[r]&K_2\ar[r]&L_1\ar[r]&L_2\ar[r]&C\ar[r]&0}
$$
and
$$
\xymatrix{0\ar[r]&K\ar@{=}[d]\ar[r]&L_1\ar@{=}[d]\ar[r]&L_2\ar[d]\ar[r]&
C_1 \ar[d]\ar[r]&
0\\
0\ar[r]&K\ar[r]&L_1\ar[r]&L\ar[r]\ar[d]&C\ar[r]\ar[d]&0\\
&&&C_2\ar@{=}[r]&C_2&}
$$
We note that the long exact sequences in the above diagrams can be
decomposed into short exact sequences so that the above two
equations can be induced by using the associativity of the
multiplication of the Ringel-Hall algebra for two times. In this
section, we will prove an identity analogous to the above
associativity of the multiplication of the derived Hall algebra in
the context of Euler characteristic.  The identity is called the
high order associativity .

\subsection{The description of $\ext^1_{\bbc Q}(M,N)$}\label{extension}
Let $M\in \bbe_{\textbf d_1}, N\in \bbe_{\textbf d_2}$  and
$m(M,N)$ be the vector space over $\bbc$ of all tuples
$m=(m(\alpha))_{\alpha\in Q_1}$ such that linear maps
$m(\alpha)\in \mathrm{Hom}_{\bbc}(M_{s(\alpha)},N_{t(\alpha)})$
for all $\alpha \in Q_1$. We define a linear map $\pi:
m(M,N)\rightarrow \mathrm{Ext}^1(M,N)$ by mapping $m\in m(M,N)$ to
a short exact sequence
$$
\xymatrix{\varepsilon:\quad 0\ar[r]& N\ar[rr]^{\left(%
\begin{array}{c}
  1 \\
  0 \\
\end{array}%
\right)}&&L(m)\ar[rr]^{\left(%
\begin{array}{cc}
  0 & 1 \\
\end{array}%
\right)}&&M\ar[r]&0}
$$
where as a vector space, $L(m)$ is the direct sum of $Y$ and $X$ .
For any $\alpha\in Q_1,$
$$
L(m)_{\alpha}=\left(%
\begin{array}{cc}
  N_{\alpha} & m(\alpha) \\
  0 & M_{\alpha} \\
\end{array}%
\right).
$$
Let us fix a vector space decomposition
$m(X,Y)=\mathrm{ker}\pi\oplus E(M,N),$ then we can identify
$\mathrm{Ext}^{1}(M,N)$ with $E(M,N)$. There is a natural
$\bbc^{*}$-action on $E(M,N)$ by defining $t.m=(tm(\alpha))$ for
any $t\in \bbc^{*}.$ This action induces the action of $\bbc^{*}$
on $\mathrm{Ext}^{1}(M,N).$ Considering the isomorphism of $\bbc
Q$-modules between $L(m)$ and $L(t.m),$ we know that
$t.\varepsilon$ is the following short exact sequence:
$$
\xymatrix{0\ar[r]& N\ar[rr]^{\left(%
\begin{array}{c}
  t \\
  0 \\
\end{array}%
\right)}&&L(m)\ar[rr]^{\left(%
\begin{array}{cc}
  0 & 1 \\
\end{array}%
\right)}&&M\ar[r]&0}
$$
for any $t\in \bbc^{*}.$ Let $\mathrm{Ext}^{1}(M,N)_{L}$ be the
subset of $\mathrm{Ext}^{1}(M,N)$ with the middle term isomorphic
to $L,$ then $\mathrm{Ext}^{1}(M,N)_{L}$ can be viewed  as a
constructible subset of $\mathrm{Ext}^1(M,N)$ by identifying
$\mathrm{Ext}^1(M,N)$ and $E(M,N).$  Define
$$
\mathrm{Ext}^1(M,N)_{\mo}=\{[0\rightarrow N\rightarrow
L\rightarrow M\rightarrow 0]\in \mathrm{Ext}^1(M,N)\setminus\{0\}
\mid L\in \mo\}
$$
where the set $\mo$ is a $G_{\textbf d_1+\textbf d_2}$-invariant
constructible subset of $\bbe_{\textbf d_1+\textbf d_2}$ (see
\cite{XXZ2006} or \cite{GLS2006}). It can be identified with
$$
E(M,N)_{\mo}=\{m\in E(M,N)\mid L(m)\in \mo \}
$$
which is constructible and $\bbc^{*}$-invariant, see
\cite{GLS2006}. Hence,  $\mathrm{Ext}^1(M,N)_{\mo}$ can be viewed
as a $\bbc^{*}$-invariant constructible subset of
$\mathrm{Ext}^1(M,N)\setminus \{0\}.$ Let $\textbf e_1, \textbf
e_2$ be two dimension vectors.

\subsection{High order associativity}\label{highorder}
Let $M, N\in\mathrm{ mod}\bbc Q$ and $\tau$ be the
Auslander-Reiten translation on $\mathrm{ mod}\bbc Q$.  We assume
that $M$ contains no projective summands. Note that for any $X\in
\mathrm{ mod}\bbc Q,$ there is a decomposition of $\bbc Q$-modules
$$
X\cong \tau(\tau^{-1}X)\oplus X/\tau(\tau^{-1}X)
$$
with $X/\tau(\tau^{-1}X)$ isomorphic to an injective $\bbc
Q$-module. For dimension vectors $\textbf d_1,\textbf d_2$ and
$\textbf e_1,\textbf e_2,$ we consider the sets
$$\hspace{-2cm}\mathrm{DEF}^{\textbf d_1,
\textbf d_2}_{\textbf e_1, \textbf e_2}(N,\tau
M)=\{(g,Y_1,X_1)\mid g\in \mathrm{Hom}_{\bbc Q}(N,\tau M),
\textbf{dim} \mathrm{ker}g=\textbf d_1,$$$$\hspace{1cm}
\textbf{dim} \tau^{-1}(\mathrm{coker}g) =\textbf d_2, V_1\in
\mathrm{Gr}_{\textbf e_1}(\Upsilon_0\Upsilon^2(g)), U_1\in
\mathrm{Gr}_{\textbf e_2}(\phi^{-}\Omega_0\Omega^2(g))\}$$ and
$\mathrm{DFE}_{\textbf e_1,\textbf e_2}(N,\tau M)=$
$$\{(N_1,M_1, g')\mid  N_1\in \mathrm{Gr}_{\textbf e_1}(N), M_1\in \mathrm{Gr}_{\textbf e_2}(M), g'\in \hom_{\bbc Q}(N/N_1,\tau M_1) \}.$$
Here, $\Upsilon_0, \Upsilon^2, \Omega_0, \Omega^2$ and $\phi^{-}$
were defined as in Section 2. By definition,
$\Upsilon_0\Upsilon^2(g)\in \bbe_{\textbf d_1}$,
$\phi^{-}\Omega_0\Omega^2(g)\in \bbe_{\textbf d_2}$  and
$\Upsilon_0\Upsilon^2(g)\cong \mathrm{ker}g$,
$\phi^{-}\Omega_0\Omega^2(g)\cong \tau^{-1}(\mathrm{coker}g)$. We
set $U=\phi^{-}\Omega_0\Omega^2(g)$ and
$V=\Upsilon_0\Upsilon^2(g)$.
\begin{remark}
Let's explain the notations for these sets. The letter ``D'' means
``derived''. The letters ``E'' and ``F'' stand for extension and
flag, respectively. Let $M$ and $N$ be two indecomposable $\bbc
Q$-modules. Then (for example, see \cite{BMRRT} or
\cite{Hubery2005})
$$
\ext^1_{\mathcal{C}(Q)}(N,M)\cong \ext^1_{\bbc Q }(N,M)\oplus
\hom_{\bbc Q}(N, \tau M)
$$
and if $\ext^1_{\bbc Q }(N,M)=0,$ then any $g\in \hom_{\bbc Q}(N,
\tau M)$ induces an extension of $M$ by $N$ in the cluster category
$\mc(Q)$ as follows
$$
M\rightarrow \mathrm{ker}g\oplus \mathrm{coker}g[-1]\rightarrow
N\xrightarrow{g}\tau M.
$$
\end{remark}

Recall that each principal $\bbc^{*}$-bundle is locally trivial in
the Zariski topology. Let $\pi: P\rightarrow Q$ be such a bundle.
Then $(\pi, Q)$ is a geometric quotient for the free action of
$\bbc^{*}$ on $P$ (see \cite{serre} and \cite{GLS2006}). In the
following, we will write $\mathbb{P}x$ for the $\bbc^*$-orbit of
$x$ if $x$ belongs to a principal $\bbc^*$-bundle.

Let $\hom(N,\tau M)(\textbf d_1, \textbf d_2)$ be the subset of
$\hom(N,\tau M)$ consisting of the morphism $g$ with $\textbf{dim}
\mathrm{ker}g=\textbf d_1, \textbf{dim} \tau^{-1}(\mathrm{coker}g)
=\textbf d_2$. By Corollary \ref{partition}, we have finite
subsets $S(\textbf d_1)$ and $S(\textbf d_2)$ of $\bbe_{\textbf
d_1}$ and $\bbe_{\textbf d_2},$ respectively such that
$$
\bbe_{\textbf d_1}=\bigsqcup_{V\in S(\textbf d_1)}\str{V}, \quad
\bbe_{\textbf d_2}=\bigsqcup_{U\in S(\textbf d_2)}\str{U}.
$$
It induces a finite partition
$$
\hom(N,\tau M)(\textbf d_1, \textbf d_2)=\bigsqcup_{V\in S(\textbf
d_1), U\in S(\textbf d_2), I }\hom(N,\tau M)_{\str{V}\oplus
\str{U}\oplus I[-1]},
$$
where $\hom(N,\tau M)_{\str{V}\oplus \str{U}\oplus I[-1]}$ is
$$
\{g\in \hom(N,\tau M)(\textbf d_1, \textbf d_2)\mid
\Upsilon_0\Upsilon^2(g)\in \str{V}, \Omega_0\Omega^2(g)= \tau
U'\oplus I,
$$
$$
\mbox{ for some }U'\in \str{U}, I \mbox{ is an injective } \bbc
Q\mbox{-module}\}.
$$
Note that $\Omega_0\Omega^2(g)\cong \mathrm{coker}g.$

There is a natural $\bbc^{*}$-action on $\hom(N,\tau M)(\textbf
d_1, \textbf d_2)^{*}:=\hom(N,\tau M)(\textbf d_1, \textbf
d_2)\setminus \{0\}$ with a principal $\bbc^{*}$-bundle:
$$\hom(N,\tau M)(\textbf d_1, \textbf d_2)^{*}\rightarrow
\mathbb{P}\hom(N,\tau M)(\textbf d_1, \textbf d_2).$$  Thus by
considering the trivial action of $\bbc^{*}$ on
$\mathrm{Gr}_{\textbf e_1,\textbf e_2}(\textbf d_1,\textbf
d_2):=\mathrm{Gr}_{\textbf e_1}(\bbe_{\textbf d_1})\times
\mathrm{Gr}_{\textbf e_2}(\bbe_{\textbf d_2}),$ we obtain a new
principal $\bbc^{*}$-bundle (similar to \cite[Section
5.4]{GLS2006}):
$$
\pi: \hom(N,\tau M)(\textbf d_1, \textbf d_2)^{*}\times
\mathrm{Gr}_{\textbf e_1,\textbf e_2}(\textbf d_1,\textbf
d_2)\rightarrow \hom(N,\tau M)(\textbf d_1, \textbf
d_2)^{*}\times^{\bbc^{*}} \mathrm{Gr}_{\textbf e_1,\textbf
e_2}(\textbf d_1,\textbf d_2).
$$
We note that the action of $\bbc^{*}$ on $\hom(N,\tau M)(\textbf
d_1, \textbf d_2)^{*}\times \mathrm{Gr}_{\textbf e_1,\textbf
e_2}(\textbf d_1,\textbf d_2)$ is free. The set
$\mathrm{DEF}^{\textbf d_1, \textbf d_2}_{\textbf e_1,\textbf
e_2}(N,\tau M)$ is its $\bbc^{*}$-stable constructible subset. This
implies that
$$\mathbb{P}\mathrm{DEF}^{\textbf d_1, \textbf d_2}_{\textbf e_1,\textbf e_2}(N,\tau M):=\pi(\mathrm{DEF}_{\textbf e_1,\textbf e_2}(N,\tau M))$$  is
again a principal $\bbc^{*}$-bundle and
$(\pi,\mathbb{P}\mathrm{DEF}_{\textbf e_1,\textbf e_2}(N,\tau M))$
is a geometric quotient for the action of $\bbc^{*}$ on
$\mathrm{DEF}_{\textbf e_1,\textbf e_2}(N,\tau M)$ (similar to
\cite[Section 5.4]{GLS2006}). We have a natural projection:
$$p: \mathbb{P}\mathrm{DEF}^{\textbf d_1, \textbf d_2}_{\textbf e_1,\textbf e_2}(N,\tau M)\rightarrow
\mathbb{P}\hom(N,\tau M)(\textbf d_1, \textbf d_2).$$ There is a
finite partition
$$
\mathbb{P}\hom(N,\tau M)(\textbf d_1, \textbf d_2)=\bigsqcup_{V\in
S(\textbf d_1), U\in S(\textbf d_2), I }\mathbb{P}\hom(N,\tau
M)_{\str{V}\oplus \str{U}\oplus I[-1]}
$$
where $\mathbb{P}\hom(N,\tau M)_{\str{V}\oplus \str{U}\oplus
I[-1]}$ is the set
$$
\{\mathbb{P}g\in \mathbb{P}\hom(N,\tau M)(\textbf d_1, \textbf
d_2)\mid \Upsilon_0\Upsilon^2(g)\in \str{V},  \Omega_0\Omega^2(g)=
\tau U'\oplus I,
$$
$$
\mbox{ for some }U'\in \str{U}, I \mbox{ is an injective } \bbc
Q\mbox{-module}\}.
$$

For any $\mathbb{P}g\in \mathbb{P}\hom(N,\tau M)(\textbf d_1,
\textbf d_2)_{\str{V}\oplus \str{U}\oplus I[-1]}$, the Euler
characteristic of the fibre $p^{-1}(\mathbb{P}g\in
\mathbb{P}\hom(N,\tau M)(\textbf d_1, \textbf d_2))$ is
$\chi(\mathrm{Gr}_{\textbf e_1}(V))\cdot \chi(\mathrm{Gr}_{\textbf
e_2}(U))$. By Proposition \ref{Euler} we obtain the following
lemma.
\begin{lemma}\label{higheuler1}
For fixed dimension vector $\textbf d,$ we have
$$
\hspace{-5cm}\sum_{\textbf e_1+\textbf e_2+\textbf{dim} M-\textbf
d_2=\textbf d}\chi(\mathbb{P}\mathrm{DEF}^{\textbf d_1,\textbf
d_2}_{\textbf e_1,\textbf e_2}(N,\tau
M))$$$$\hspace{-1cm}=\sum_{\begin{subarray}{l}\textbf d_1,\textbf d_2,\textbf e_1,\textbf e_2,U,V, I;\\
\textbf e_1+\textbf e_2+\textbf{dim} M-\textbf d_2=\textbf d,
\\U\in S(\textbf d_2),V\in
S(\textbf d_1)\end{subarray}}\chi(\mathbb{P}\mathrm{Hom}(N,\tau
M)_{\str{V}\oplus \str{U}\oplus I[-1]})\chi(\mathrm{Gr}_{\textbf
e_1}(V)\chi(\mathrm{Gr}_{\textbf e_2}(U))
$$
\end{lemma}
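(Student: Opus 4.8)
The statement is a bookkeeping identity obtained by applying Proposition \ref{Euler} to the projection $p$, so the proof is essentially a stratification-and-fibration argument. First I would observe that $\mathbb{P}\mathrm{DEF}^{\textbf d_1,\textbf d_2}_{\textbf e_1,\textbf e_2}(N,\tau M)$ decomposes as the disjoint union, over the triples $(V,U,I)$ with $V\in S(\textbf d_1)$, $U\in S(\textbf d_2)$ and $I$ an injective $\bbc Q$-module, of the preimages $p^{-1}(\mathbb{P}\hom(N,\tau M)_{\str{V}\oplus\str{U}\oplus I[-1]})$; this is just the finite partition of $\mathbb{P}\hom(N,\tau M)(\textbf d_1,\textbf d_2)$ stated just above the lemma, pulled back along $p$. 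By part (1) of Proposition \ref{Euler}, $\chi(\mathbb{P}\mathrm{DEF}^{\textbf d_1,\textbf d_2}_{\textbf e_1,\textbf e_2}(N,\tau M))$ is the sum over $(V,U,I)$ of $\chi$ of these pieces.

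Next I would compute $\chi$ of each piece using part (2) of Proposition \ref{Euler}. The key point is that the restriction of $p$ to $p^{-1}(\mathbb{P}\hom(N,\tau M)_{\str{V}\oplus\str{U}\oplus I[-1]})$ has all fibres of the same Euler characteristic: indeed, for $\mathbb{P}g$ in this stratum the fibre is $\mathrm{Gr}_{\textbf e_1}(\Upsilon_0\Upsilon^2(g))\times\mathrm{Gr}_{\textbf e_2}(\phi^-\Omega_0\Omega^2(g))$, and by construction $\Upsilon_0\Upsilon^2(g)\in\str{V}$ and $\phi^-\Omega_0\Omega^2(g)$ lies in $\str{U}$ (since $\Omega_0\Omega^2(g)=\tau U'\oplus I$ with $U'\in\str{U}$, so $\tau^{-1}(\mathrm{coker}\,g)\cong U'\in\str{U}$); hence by the very definition of $\str{\cdot}$ the fibre has Euler characteristic exactly $\chi(\mathrm{Gr}_{\textbf e_1}(V))\cdot\chi(\mathrm{Gr}_{\textbf e_2}(U))$, independent of $g$. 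Here one uses Corollary \ref{grassmanian-submodule} and Proposition \ref{AR-grassmanian} to know that $p$ and the maps defining $\mathrm{DEF}$ are genuine morphisms of varieties on each stratification, and the discussion preceding the lemma (the geometric-quotient / principal-$\bbc^*$-bundle setup from \cite[Section 5.4]{GLS2006}) to legitimately pass to the projectivizations. Thus each piece contributes $\chi(\mathbb{P}\hom(N,\tau M)_{\str{V}\oplus\str{U}\oplus I[-1]})\cdot\chi(\mathrm{Gr}_{\textbf e_1}(V))\cdot\chi(\mathrm{Gr}_{\textbf e_2}(U))$.

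Finally I would sum the identity just obtained over all $\textbf d_1,\textbf d_2,\textbf e_1,\textbf e_2$ subject to the constraint $\textbf e_1+\textbf e_2+\textbf{dim}\,M-\textbf d_2=\textbf d$. The left-hand side becomes $\sum_{\textbf e_1+\textbf e_2+\textbf{dim}\,M-\textbf d_2=\textbf d}\chi(\mathbb{P}\mathrm{DEF}^{\textbf d_1,\textbf d_2}_{\textbf e_1,\textbf e_2}(N,\tau M))$, and the right-hand side is exactly the stated double sum over $\textbf d_1,\textbf d_2,\textbf e_1,\textbf e_2,U,V,I$. The one thing to be careful about — and the only real obstacle — is finiteness: one must check that for fixed $\textbf d$ only finitely many tuples $(\textbf d_1,\textbf d_2,\textbf e_1,\textbf e_2,U,V,I)$ contribute a nonzero term, so that all the sums are finite. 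This follows because $\textbf d_2$ is bounded (it is $\textbf{dim}\,\tau^{-1}(\mathrm{coker}\,g)$ with $\mathrm{coker}\,g$ a quotient of $\tau M$), the constraint then bounds $\textbf e_1+\textbf e_2$, hence $\textbf d_1$ is bounded by $\textbf{dim}\,N$, the sets $S(\textbf d_1), S(\textbf d_2)$ are finite by Corollary \ref{partition}, and there are only finitely many injective modules $I$ with bounded dimension vector. Everything else is the routine application of Proposition \ref{Euler} parts (1) and (2).
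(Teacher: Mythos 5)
Your proof is correct and follows exactly the paper's argument: partition the target of the projection $p$ into the strata $\mathbb{P}\hom(N,\tau M)_{\str{V}\oplus\str{U}\oplus I[-1]}$, note that over each stratum the fibre of $p$ is a product of Grassmannians whose Euler characteristic equals $\chi(\mathrm{Gr}_{\textbf e_1}(V))\chi(\mathrm{Gr}_{\textbf e_2}(U))$ by the very definition of $\str{\cdot}$, and then apply parts (1) and (2) of Proposition~\ref{Euler}. The explicit finiteness check you append is not spelled out in the paper but is a reasonable complement and does not change the route.
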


There is also a free action of $\bbc^{*}$ on $\mathrm{DFE}_{\textbf
e_1,\textbf e_2}(N,\tau M)$ defined by
$$
t.(N_1,M_1,g)=(N_1,M_1,t.g)
$$
for any $t\in \bbc^{*}$ and $(N_1,M_1,g)\in \mathrm{DFE}_{\textbf
e_1,\textbf e_2}(N,\tau M).$ The orbit space is denoted by
$\mathbb{P}\mathrm{DFE}_{\textbf e_1,\textbf e_2}(N,\tau M).$

Consider a natural projection
$$q: \mathbb{P}\mathrm{DFE}_{\textbf e_1,\textbf e_2}(N, \tau M)\rightarrow
\mathrm{Gr}_{\textbf e_1}(N)\times \mathrm{Gr}_{\textbf e_2}(M).$$
Define $\str{(N_1 ,M_1)}$ to be
$$
\{(N'_1,M'_1)\in \mathrm{Gr}_{\textbf e_1}(N)\times
\mathrm{Gr}_{\textbf e_2}(M)\mid
\chi(\mathbb{P}\hom(N/N'_1,M'_1))=\chi(\mathbb{P}\hom(N/N_1,\tau
M_1))\}.
$$
We note that the notation is different from the Euler form
$\left<N_1, M_1\right>$ of $N_1$ and $M_1.$ Since
$q_{*}(1_{\mathbb{P}\mathrm{DFE}_{\textbf e_1,\textbf e_2}(N,\tau
M)})$ is a constructible function on $\mathrm{Gr}_{\textbf
e_1}(N)\times \mathrm{Gr}_{\textbf e_2}(\tau M)$ by Theorem
\ref{Joyce}, $\str{(N_1 ,M_1)}$ is a constructible subset and
there exists a finite subset $R(\textbf e_1,\textbf e_2)$ of
$\mathrm{Gr}_{\textbf e_1}(N)\times Gr_{\textbf e_2}(M)$ such that
$$
\mathrm{Gr}_{\textbf e_1}(N)\times \mathrm{Gr}_{\textbf
e_2}(M)=\bigsqcup_{(N_1,M_1)\in R(\textbf e_1,\textbf
e_2)}\str{(N_1,M_1)}.
$$
Hence, by Proposition \ref{Euler}, we have the following lemma.
\begin{lemma}\label{higheuler2}
For fixed dimension vector $\textbf d,$ we have
$$
\sum_{\textbf e_1,\textbf e_2;\textbf e_1+\textbf e_2=\textbf
d}\chi(\mathbb{P}\mathrm{DFE}_{\textbf e_1,\textbf e_2}(N,\tau
M))=$$$$\sum_{\textbf e_1,\textbf e_2;\textbf e_1+\textbf
e_2=\textbf d}\sum_{(N_1,M_1)\in R(\textbf e_1,\textbf
e_2)}\chi(\str{(N_1,M_1)})\chi(\mathbb{P}\mathrm{Hom}(N/N_1, \tau
M_1)).
$$
\end{lemma}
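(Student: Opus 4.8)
The plan is to deduce the identity directly from the fibre structure of the natural projection
$$q: \mathbb{P}\mathrm{DFE}_{\textbf e_1,\textbf e_2}(N, \tau M)\rightarrow \mathrm{Gr}_{\textbf e_1}(N)\times \mathrm{Gr}_{\textbf e_2}(M),$$
combined with the finite stratification of its target into the pieces $\str{(N_1,M_1)}$ already constructed above (via $q_{*}(1_{\mathbb{P}\mathrm{DFE}_{\textbf e_1,\textbf e_2}(N,\tau M)})$ and Theorem \ref{Joyce}), and then applying Proposition \ref{Euler}.

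First I would pin down the fibres of $q$. Forgetting the homomorphism exhibits $\mathrm{DFE}_{\textbf e_1,\textbf e_2}(N,\tau M)$ over $\mathrm{Gr}_{\textbf e_1}(N)\times \mathrm{Gr}_{\textbf e_2}(M)$ with fibre over a point $(N_1,M_1)$ the vector space $\hom_{\bbc Q}(N/N_1,\tau M_1)$; since the free $\bbc^{*}$-action is the scaling on this last coordinate and $\mathbb{P}\mathrm{DFE}$ was built as a geometric quotient, the fibre of $q$ over $(N_1,M_1)$ is $\mathbb{P}\hom_{\bbc Q}(N/N_1,\tau M_1)$, of Euler characteristic $\chi(\mathbb{P}\hom_{\bbc Q}(N/N_1,\tau M_1))$ (this is $0$ precisely when the $\hom$-space vanishes, so the description is uniform). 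By the defining condition of the stratum $\str{(N_1,M_1)}$, all the fibres of $q$ over points of $\str{(N_1,M_1)}$ have this same Euler characteristic, so Proposition \ref{Euler}(2) applied to the restriction of $q$ to $q^{-1}(\str{(N_1,M_1)})$ gives $\chi(q^{-1}(\str{(N_1,M_1)}))=\chi(\mathbb{P}\hom_{\bbc Q}(N/N_1,\tau M_1))\cdot\chi(\str{(N_1,M_1)})$. Summing over the finite partition $\mathrm{Gr}_{\textbf e_1}(N)\times \mathrm{Gr}_{\textbf e_2}(M)=\bigsqcup_{(N_1,M_1)\in R(\textbf e_1,\textbf e_2)}\str{(N_1,M_1)}$ and using Proposition \ref{Euler}(1) yields the identity for each fixed $(\textbf e_1,\textbf e_2)$; adding these up over the finitely many $(\textbf e_1,\textbf e_2)$ with $\textbf e_1+\textbf e_2=\textbf d$ and $\mathrm{Gr}_{\textbf e_1}(N)$, $\mathrm{Gr}_{\textbf e_2}(M)$ nonempty, again by Proposition \ref{Euler}(1), gives the stated equation.

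The steps requiring care are the bookkeeping ones, and the main (mild) obstacle is the fibrewise analysis: checking that $q$ and each restriction $q^{-1}(\str{(N_1,M_1)})\to\str{(N_1,M_1)}$ is a morphism of varieties so that Proposition \ref{Euler} applies — clear since $\mathbb{P}\mathrm{DFE}$ is a geometric quotient and the $\str{(N_1,M_1)}$ are constructible, which is exactly what was established just before the statement — and that the passage from the $\hom$-space to $\mathbb{P}\hom_{\bbc Q}(N/N_1,\tau M_1)$ is compatible with the $\bbc^{*}$-action. Beyond this there is no deeper difficulty: all the geometric input, namely constructibility of $q_{*}(1)$, the finite index set $R(\textbf e_1,\textbf e_2)$, and the listed properties of $\chi$, is already in hand, so the lemma is essentially a direct application of Proposition \ref{Euler}.
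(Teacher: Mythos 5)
Your proof matches the paper's own argument: both take the projection $q:\mathbb{P}\mathrm{DFE}_{\textbf e_1,\textbf e_2}(N,\tau M)\rightarrow\mathrm{Gr}_{\textbf e_1}(N)\times\mathrm{Gr}_{\textbf e_2}(M)$, note that the fibre over $(N_1,M_1)$ is $\mathbb{P}\mathrm{Hom}_{\bbc Q}(N/N_1,\tau M_1)$, stratify the base into the finitely many constructible pieces $\str{(N_1,M_1)}$ on which this Euler characteristic is constant (using constructibility of $q_{*}(1_{\mathbb{P}\mathrm{DFE}})$ via Theorem \ref{Joyce}), and then apply Proposition \ref{Euler}. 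You simply spell out the fibre identification and the constancy check, which the paper leaves implicit.
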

\bigskip

Now, we can compare $\mathrm{DEF}^{\textbf d_1, \textbf
d_2}_{\textbf e_1,\textbf e_2}(N,\tau M))$ with
$\mathrm{DFE}_{\textbf e_1,\textbf e_2}(N,\tau M))$.  Let
$(g,V_1,U_1)\in DEF^{\textbf d_1, \textbf d_2}_{\textbf
e_1,\textbf e_2}(N,\tau M).$ Then we have a long exact sequence
$$
0\rightarrow V\rightarrow N\xrightarrow{g} \tau M\rightarrow \tau
U\oplus I\rightarrow 0
$$
where $V=\Upsilon_0\Upsilon^2(g)$ and
$U=\phi^-\Omega_0\Omega^2(g)$. It is clear that $V_1\in
\mathrm{Gr}_{\textbf e_1}(N)$. By Proposition \ref{Kernelcokernel}
and Corollary \ref{grassmanian-submodule}, we have a morphism of
varieties $\mathrm{Gr}_{\textbf e_1}(N)\rightarrow
\bbe_{\textbf{dim}N-\textbf e_1}$ sending any submodule $N_1$ to a
$\bbc Q$-module isomorphic to the quotient module $N/N_1$. Let
$U^{*}$ be the pullback of the following diagram
$$
\xymatrix{
 0\ar[r]&\Omega^2(g) \ar[r]\ar@{=}[d]&U^{*}\ar[r]\ar[d]&
 U_1\ar[d]\ar[r]&0\\0\ar[r]&
\Omega^2(g) \ar[r]&M\ar[r]& U\ar[r]&0}
$$
Note that $\Omega^2(g)=\mathrm{Im}g$. Then $U^{*}\in
\mathrm{Gr}_{\textbf{dim}M-\textbf{dim}U+\textbf{dim}U_1}(M)$ and
we have the commutative diagram
$$
\xymatrix{ 0\ar[r]& V\ar[r]\ar[d]& N\ar[r]^{g}\ar[d]& \tau M\ar[r]&
\tau U\oplus I\ar[r]& 0\\
0\ar[r]& V/V_1\ar[r]& N/Y_1\ar[r]^{g'}& \tau U^{*}\ar[r]\ar[u] &
\tau U_1\oplus I\ar[r]\ar[u]& 0}
$$
Hence, for fixed dimension vector $\textbf d$, by Proposition
\ref{Kernelcokernel}, Corollary \ref{grassmanian-submodule} and
\ref{AR-grassmanian}, we have a morphism:
$$
\Gamma: \bigsqcup_{\begin{subarray}{l}\,\,\,\,\,\,\textbf d_1,\textbf d_2,\textbf e_1,\textbf e_2;\\
\textbf e_1+\textbf e_2+\textbf{dim} M-\textbf d_2=\textbf d,
\end{subarray} }\mathrm{DEF}^{\textbf d_1,\textbf
d_2}_{\textbf e_1,\textbf e_2}(N,\tau M)\rightarrow
\bigsqcup_{\textbf e'_1,\textbf e'_2; \textbf e'_1+\textbf
e_2=\textbf d}\mathrm{DFE}_{\textbf e'_1,\textbf e'_2}(N,\tau M)
$$
mapping $(g, V_1,U_1)$ to $(V_1, U^{*}, g')$. Conversely, we have an
inverse morphism:
$$
\Gamma': \bigsqcup_{\textbf e'_1,\textbf e'_2}\mathrm{DFE}_{\textbf
e'_1,\textbf e'_2}(N,\tau M)\rightarrow \bigsqcup_{\begin{subarray}{l}\,\,\,\,\,\,\textbf d_1,\textbf d_2,\textbf e_1,\textbf e_2;\\
\textbf e_1+\textbf e_2+\textbf{dim} M-\textbf d_2=\textbf d,
\end{subarray} }\mathrm{DEF}^{\textbf d_1,\textbf
d_2}_{\textbf e_1,\textbf e_2}(N,\tau M).
$$
The action of $\bbc^{*}$ induces the homeomorphism
$$
\mathbb{P}\Gamma: \bigsqcup_{\begin{subarray}{l}\,\,\,\,\,\,\textbf d_1,\textbf d_2,\textbf e_1,\textbf e_2;\\
\textbf e_1+\textbf e_2+\textbf{dim} M-\textbf d_2=\textbf d,
\end{subarray} }\mathbb{P}\mathrm{DEF}^{\textbf d_1,\textbf
d_2}_{\textbf e_1,\textbf e_2}(N,\tau M)\rightarrow
\bigsqcup_{\textbf e'_1,\textbf e'_2; \textbf e'_1+\textbf
e_2=\textbf d}\mathbb{P}\mathrm{DFE}_{\textbf e'_1,\textbf
e'_2}(N,\tau M).
$$
By Lemma \ref{higheuler1} and \ref{higheuler2}, The above
homeomorphism induces the following proposition referred to as the
high order associativity.
\begin{Prop}\label{associativity2}
With the above notations, for fixed dimension vector $\textbf d$, we
have
$$
\sum_{\begin{subarray}{l}\textbf d_1,\textbf d_2,\textbf e_1,\textbf e_2,U,V, I;\\
\textbf e_1+\textbf e_2+\textbf{dim} M-\textbf d_2=\textbf d,
\\U\in S(\textbf d_2),V\in
S(\textbf d_1)\end{subarray}}\chi(\mathbb{P}\mathrm{Hom}(N,\tau
M)_{\str{V}\oplus \str{U}\oplus I[-1]})\chi(\mathrm{Gr}_{\textbf
e_1}(V))\chi(\mathrm{Gr}_{\textbf e_2}(U))
$$
$$
=\sum_{\textbf e'_1,\textbf e'_2;\textbf e'_1+\textbf e'_2=\textbf
d}\sum_{(N_1,M_1)\in R(\textbf e'_1,\textbf
e'_2)}\chi(\str{(N_1,M_1)})\chi(\mathbb{P}\mathrm{Hom}(N/N_1, \tau
M_1)).
$$
\end{Prop}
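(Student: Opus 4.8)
The plan is to prove Proposition~\ref{associativity2} by combining the two counting lemmas just established, Lemma~\ref{higheuler1} and Lemma~\ref{higheuler2}, via the bijective correspondence between the parametrizing varieties. The left-hand side of the desired identity is, by Lemma~\ref{higheuler1}, equal to
$$
\sum_{\textbf e_1+\textbf e_2+\textbf{dim} M-\textbf d_2=\textbf d}\chi(\mathbb{P}\mathrm{DEF}^{\textbf d_1,\textbf d_2}_{\textbf e_1,\textbf e_2}(N,\tau M)),
$$
while the right-hand side is, by Lemma~\ref{higheuler2}, equal to
$$
\sum_{\textbf e'_1+\textbf e'_2=\textbf d}\chi(\mathbb{P}\mathrm{DFE}_{\textbf e'_1,\textbf e'_2}(N,\tau M)).
$$
So it suffices to show that the total spaces $\bigsqcup \mathbb{P}\mathrm{DEF}^{\textbf d_1,\textbf d_2}_{\textbf e_1,\textbf e_2}(N,\tau M)$ (over all index tuples with $\textbf e_1+\textbf e_2+\textbf{dim}M-\textbf d_2=\textbf d$) and $\bigsqcup \mathbb{P}\mathrm{DFE}_{\textbf e'_1,\textbf e'_2}(N,\tau M)$ (over all $\textbf e'_1+\textbf e'_2=\textbf d$) have the same Euler characteristic. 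This is exactly what the homeomorphism $\mathbb{P}\Gamma$ constructed above the statement provides, once we check it is well-defined and bijective. By Proposition~\ref{Euler}(1), equal Euler characteristic of homeomorphic (indeed, just constructibly bijective) varieties gives the result, so the whole task reduces to verifying that $\Gamma$ and $\Gamma'$ are mutually inverse morphisms of varieties after passing to $\bbc^{*}$-quotients.

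Concretely, I would proceed in the following steps. First, make precise the map $\Gamma$: given $(g,V_1,U_1)\in \mathrm{DEF}^{\textbf d_1,\textbf d_2}_{\textbf e_1,\textbf e_2}(N,\tau M)$, the long exact sequence $0\to V\to N\xrightarrow{g}\tau M\to \tau U\oplus I\to 0$ with $V=\Upsilon_0\Upsilon^2(g)\cong\ker g$ and $U=\phi^-\Omega_0\Omega^2(g)\cong\tau^{-1}(\mathrm{coker}\,g)$ shows $V_1\subseteq V\subseteq N$ is a submodule of $N$, hence lies in $\mathrm{Gr}_{\textbf e_1}(N)$; then form $U^{*}$ as the pullback of $M\twoheadrightarrow U$ along $U_1\hookrightarrow U$, which is a submodule of $M$ of the dimension recorded in the excerpt, and $g'$ is the induced map $N/V_1\to \tau U^{*}$ read off from the displayed commutative diagram. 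I must check each of $V_1$, $U^{*}$, $g'$ depends on $(g,V_1,U_1)$ through the morphisms of varieties already produced — $\Upsilon^1,\Upsilon^2,\Omega^1,\Omega^2$ (Proposition~\ref{Kernelcokernel}), $\Upsilon_0,\Omega_0$ (Corollary~\ref{grassmanian-submodule}), $\phi^{\pm}$ (Propositions~\ref{AR1},~\ref{AR2}) and $g_\tau$ (Proposition~\ref{AR-grassmanian}) — so that $\Gamma$ is a morphism on each stratum. Second, construct $\Gamma'$ in the reverse direction: from $(N_1,M_1,g')\in\mathrm{DFE}_{\textbf e'_1,\textbf e'_2}(N,\tau M)$ with $g'\in\hom_{\bbc Q}(N/N_1,\tau M_1)$, pull $g'$ back along $N\twoheadrightarrow N/N_1$ and push out along $\tau M_1\hookrightarrow \tau M$ (equivalently, compose $N\to N/N_1\xrightarrow{g'}\tau M_1\hookrightarrow\tau M$) to recover a morphism $g:N\to\tau M$, and then recover $V_1$ and $U_1$ from the kernel/cokernel data of $g$ and $g'$; again express everything through the named morphisms of varieties. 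Third, verify $\Gamma'\circ\Gamma$ and $\Gamma\circ\Gamma'$ are the identity, which is a diagram chase in the abelian category $\mathrm{mod}\,\bbc Q$ using the two commutative diagrams displayed before the statement (these encode precisely the ``high order associativity'' decompositions of long exact sequences into short ones).

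Fourth, handle the $\bbc^{*}$-action: both $\Gamma$ and $\Gamma'$ are $\bbc^{*}$-equivariant for the scaling action $t.(g,V_1,U_1)=(t.g,V_1,U_1)$ and $t.(N_1,M_1,g')=(N_1,M_1,t.g')$, because rescaling $g$ rescales the induced $g'$ and leaves kernels, images, cokernels (hence all the submodule data) unchanged. Since these actions are free and the quotients are geometric (as recorded in the discussion preceding the statement, following the argument of~\cite[Section 5.4]{GLS2006}), $\Gamma$ descends to the claimed homeomorphism $\mathbb{P}\Gamma$ between the disjoint unions of $\bbc^{*}$-quotients, with inverse induced by $\Gamma'$. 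Finally, invoke Proposition~\ref{Euler}(1) to equate the Euler characteristics of the two sides of $\mathbb{P}\Gamma$, and substitute the evaluations from Lemma~\ref{higheuler1} and Lemma~\ref{higheuler2} to obtain the stated equality.

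I expect the main obstacle to be the bookkeeping in Step~1--3: showing that $U^{*}$ and $g'$ are genuinely constructed by morphisms of varieties (not just set-theoretically) on each stratum, and that $\Gamma$ and $\Gamma'$ are honestly inverse to each other including the matching of all dimension-vector indices (in particular that $\textbf e'_1=\textbf e_1$ and $\textbf e'_2=\textbf{dim}M-\textbf{dim}U+\textbf{dim}U_1$ with $\textbf e'_1+\textbf e'_2=\textbf d$ under the constraint $\textbf e_1+\textbf e_2+\textbf{dim}M-\textbf d_2=\textbf d$). The abelian-category diagram chase itself is routine given the two ``associativity'' diagrams, but phrasing it so that it is simultaneously a chase of varieties — i.e. that the pullback/pushout operations are realized by the explicit $\Upsilon,\Omega,\phi^{\pm},g_\tau$ maps from Sections~2 and the stratifications are compatible — is the delicate point. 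Everything else (equivariance, freeness, geometric quotient, passing to Euler characteristics) is either already supplied in the text or immediate from Proposition~\ref{Euler}.
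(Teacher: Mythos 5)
Your proposal is correct and follows essentially the same route as the paper: equate the left-hand and right-hand sums with $\sum\chi(\mathbb{P}\mathrm{DEF})$ and $\sum\chi(\mathbb{P}\mathrm{DFE})$ via Lemmas~\ref{higheuler1} and~\ref{higheuler2}, then use the $\bbc^*$-equivariant pair of mutually inverse morphisms $\Gamma,\Gamma'$ (built from $\Upsilon$, $\Omega$, $\phi^{\pm}$, $g_\tau$ and the pullback construction of $U^*$) to get the homeomorphism $\mathbb{P}\Gamma$ and hence equality of Euler characteristics. The details you flag as ``main obstacles'' — that $\Gamma$, $\Gamma'$ are morphisms of varieties stratum by stratum, that they are genuinely inverse, and the bookkeeping of dimension vectors — are precisely the points the paper treats in the paragraph preceding the Proposition, so your outline matches the intended argument.
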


\section{main theorem and proof}

\subsection{The main theorem} We introduce some notations.
For any $\bbc Q$-module $M$ and projective $\bbc Q$-module $P$,
let $I=\mathrm{DHom}_{\bbc Q}(P,\bbc Q).$  By Corollary
\ref{partition}, we have the following finite partitions:
$$
\hom(M, I)=\bigsqcup_{I', V\in S(\textbf
d_1(I'))}\hom(M,I)_{\str{V}\oplus I'[-1]},
$$
$$
\hom(P, M)=\bigsqcup_{P', U\in S(\textbf
d_2(P'))}\hom(P,M)_{P'[1]\oplus\str{U}},
$$
where $\textbf
d_1(I')=\textbf{dim}I+\textbf{dim}I'-\textbf{dim}M$, $\textbf
d_2(P')=\textbf{dim}P+\textbf{dim}P'-\textbf{dim}M$,
$\bbe_{\textbf d_1}=\bigsqcup_{V\in S(\textbf d_1)}\str{V}$,
$\bbe_{\textbf d_2}=\bigsqcup_{U\in S(\textbf d_2)}\str{U}$,
$\hom(M,I)_{\str{V}\oplus I'[-1]}=\{f\in \hom(P,M)\mid
\Upsilon_0\Upsilon^2(f)=V', \Omega_0\Omega^2(f)=I' \mbox{ for some
} V'\in \str{V}\} $, and $ \hom(P,M)_{P'[1]\oplus\str{U}}=\{g\in
\hom(P,M)\mid \Upsilon_0\Upsilon^2(g)=P', \Omega_0\Omega^2(g)=U'
\mbox{ for some } U'\in \str{U}\} $. Note that
$\Upsilon_0\Upsilon^2(f)\cong \mathrm{ker}f$ and
$\Omega_0\Omega^2(f)\cong \mathrm{coker}f.$

The following theorem generalizes the cluster multiplication
theorem for finite type in \cite{CK2005} and affine type in
\cite{Hubery2005} and is referred to as the cluster multiplication
theorem for arbitrary type.

\begin{theorem}\label{clustertheorem}
Let $Q$ be an acyclic quiver. Then

\nd (1) for any $\bbc Q$-modules $M,N$ such that $M$ contains no
projective summand, we have
$$\hspace{0cm}\mathrm{dim}_{\bbc}\mathrm{Ext}^1_{\bbc Q}(M,N)X_{M} X_{N} =\sum_{L\in S(\textbf e)}\chi(\mathbb{P}\mathrm{Ext}^{1}_{\bbc Q}(M,N)_{\str{L}})X_{L}$$
$$
+\sum_{I, \textbf d_1,\textbf d_2}\sum_{ V\in S(\textbf d_1),U\in
S(\textbf d_2)}\chi(\mathbb{P}\mathrm{Hom}_{\bbc Q}(N,\tau
M)_{\str{V}\oplus \str{U}\oplus I[-1]})X_{U}X_{V}x^{\textbf{dim}
\mathrm{soc}I}
$$
where $\textbf e=\textbf{dim} M+\textbf{dim} N$.

\nd (2) for any $\bbc Q$-module $M$ and projective $\bbc Q$-module
$P$, we have
 $$\mathrm{dim}_{\bbc}\mathrm{Hom}_{\bbc Q}(P,M)X_{M}x^{\textbf{dim}top(P)}=\sum_{I',V\in S(\textbf d_1(I'))}
 \chi(\mathbb{P}\mathrm{Hom}_{\bbc Q}(M,I)_{\str{V}\oplus I'[-1]
})X_{V}x^{\textbf{dim}\mathrm{soc}I'}$$$$+\sum_{P', U\in S(\textbf
d_2(P'))}\chi(\mathbb{P}\mathrm{Hom}_{\bbc Q}(P,M)_{P'[1]\oplus
\str{U}})X_{U}x^{\textbf{dim}top(P')}
 $$
where $top(P)=P/radP$, $I=\mathrm{DHom}_{\bbc Q}(P,\bbc Q),$ $I'$
is injective, and $P'$ is projective.
\end{theorem}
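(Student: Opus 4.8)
The plan is to expand both sides with the reformulated Caldero--Chapoton map (the two rules displayed right after Lemma~\ref{Huberylemma1}) and to compare the coefficient of each Laurent monomial. For part~(1) put $\textbf e=\textbf{dim}M+\textbf{dim}N$. By the multiplicativity $X_MX_N=X_{M\oplus N}$ proved just after Lemma~\ref{Huberylemma1}, we have $X_MX_N=\sum_{\textbf g}\chi(\mathrm{Gr}_{\textbf g}(M\oplus N))\,x^{\textbf gR+(\textbf e-\textbf g)R^{tr}-\textbf e}$ with $\chi(\mathrm{Gr}_{\textbf g}(M\oplus N))=\sum_{\textbf e_1+\textbf e_2=\textbf g}\chi(\mathrm{Gr}_{\textbf e_1}(M))\chi(\mathrm{Gr}_{\textbf e_2}(N))$, while $X_L=\sum_{\textbf g}\chi(\mathrm{Gr}_{\textbf g}(L))\,x^{\textbf gR+(\textbf e-\textbf g)R^{tr}-\textbf e}$ for any $L$ with $\textbf{dim}L=\textbf e$. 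Finally $X_UX_Vx^{\textbf{dim}\,\mathrm{soc}I}=X_{U\oplus V\oplus I[-1]}$, and since $U\oplus V\oplus I[-1]$ is the middle term of an extension of $N$ by $M$ in $\mathcal{C}$ (cf.\ the Remark in Section~\ref{highorder}), a direct computation from Lemma~\ref{Huberylemma1} and the four term exact sequence $0\to V\to N\xrightarrow{g}\tau M\to\tau U\oplus I\to0$ shows that this Laurent polynomial is also supported on the monomials $x^{\textbf gR+(\textbf e-\textbf g)R^{tr}-\textbf e}$. Hence it is enough to prove, for each $\textbf g$, the identity of Euler characteristics
$$\dim_{\bbc}\ext^1_{\bbc Q}(M,N)\cdot\chi(\mathrm{Gr}_{\textbf g}(M\oplus N))=\sum_{L\in S(\textbf e)}\chi(\mathbb{P}\ext^1_{\bbc Q}(M,N)_{\str L})\,\chi(\mathrm{Gr}_{\textbf g}(L))+C_{\textbf g},$$
where $C_{\textbf g}$ denotes the coefficient contributed by the second term of the right hand side; recall that $\dim_{\bbc}\ext^1_{\bbc Q}(M,N)=\chi(\mathbb{P}\ext^1_{\bbc Q}(M,N))=\sum_{L\in S(\textbf e)}\chi(\mathbb{P}\ext^1_{\bbc Q}(M,N)_{\str L})$ by Corollary~\ref{partition}.

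The term $C_{\textbf g}$ is where the high order associativity enters. Expanding $X_UX_V$ by rule~(2), $C_{\textbf g}$ becomes a sum of products $\chi(\mathbb{P}\hom_{\bbc Q}(N,\tau M)_{\str V\oplus\str U\oplus I[-1]})\,\chi(\mathrm{Gr}_{\textbf e_1}(V))\,\chi(\mathrm{Gr}_{\textbf e_2}(U))$ over the relevant indices, which, after reindexing, is precisely the left hand side of Proposition~\ref{associativity2} for the dimension vector $\textbf d$ matched to $\textbf g$ by Lemma~\ref{Huberylemma1} and the above exact sequence. Consequently
$$C_{\textbf g}=\sum_{\textbf e_1+\textbf e_2=\textbf g}\ \sum_{(N_1,M_1)\in R(\textbf e_1,\textbf e_2)}\chi(\str{(N_1,M_1)})\,\chi(\mathbb{P}\hom_{\bbc Q}(N/N_1,\tau M_1)),$$
where $(N_1,M_1)$ runs over submodules of $N$ and $M$ of dimension vectors $\textbf e_1,\textbf e_2$, and the $\str{(N_1,M_1)}$ are the pieces partitioning $\mathrm{Gr}_{\textbf e_1}(N)\times\mathrm{Gr}_{\textbf e_2}(M)$ introduced before Lemma~\ref{higheuler2}.

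To prove the displayed identity with this value of $C_{\textbf g}$ I would introduce, for fixed $\textbf g$, the incidence variety
$$\mathcal I=\{(\mathbb{P}\varepsilon,N_1,M_1)\mid\mathbb{P}\varepsilon\in\mathbb{P}\ext^1_{\bbc Q}(M,N),\ N_1\subseteq N,\ M_1\subseteq M,\ \textbf{dim}N_1+\textbf{dim}M_1=\textbf g,\ \rho_{N_1,M_1}(\varepsilon)\neq0\},$$
where $\rho_{N_1,M_1}\colon\ext^1_{\bbc Q}(M,N)\to\ext^1_{\bbc Q}(M_1,N/N_1)$ is the natural map (restriction along $M_1\hookrightarrow M$ followed by push-out along $N\twoheadrightarrow N/N_1$); since $\bbc Q$ is hereditary, $\rho_{N_1,M_1}$ is surjective, and the condition $\rho_{N_1,M_1}(\varepsilon)\neq0$ is $\bbc^{*}$-invariant, so $\mathcal I$ is well defined. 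Using the $\bbc^{*}$-action and the principal bundles recalled in Section~\ref{extension}, together with the kernel/cokernel morphisms of Proposition~\ref{Kernelcokernel} and Corollary~\ref{grassmanian-submodule}, $\mathcal I$ carries on each stratum the structure of an algebraic variety with two morphisms, and I would compute $\chi(\mathcal I)$ in two ways. Along the first projection $\mathcal I\to\mathbb{P}\ext^1_{\bbc Q}(M,N)$, the fibre over $\mathbb{P}\varepsilon$ is the set of pairs $(N_1,M_1)$ with $\rho_{N_1,M_1}(\varepsilon)\neq0$; a $\textbf g$-dimensional submodule $L_1$ of the middle term $L(\varepsilon)$ is the same datum as a pair $(N_1,M_1)=(L_1\cap N,\ \mathrm{Im}(L_1\to M))$ together with a splitting of a short exact sequence induced by $\varepsilon$, and such a splitting exists---and then the splittings form an affine space---exactly when $\rho_{N_1,M_1}(\varepsilon)=0$; hence that fibre has Euler characteristic $\chi(\mathrm{Gr}_{\textbf g}(M\oplus N))-\chi(\mathrm{Gr}_{\textbf g}(L(\varepsilon)))$, which is constant on each stratum $\mathbb{P}\ext^1_{\bbc Q}(M,N)_{\str L}$, so by Proposition~\ref{Euler} $\chi(\mathcal I)$ equals the left hand side of the desired identity minus its first term. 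Along the second projection $\mathcal I\to\bigsqcup_{\textbf e_1+\textbf e_2=\textbf g}\mathrm{Gr}_{\textbf e_1}(N)\times\mathrm{Gr}_{\textbf e_2}(M)$, the fibre over $(N_1,M_1)$ is $\mathbb{P}\ext^1_{\bbc Q}(M,N)\setminus\mathbb{P}\ker\rho_{N_1,M_1}$, whose Euler characteristic is $\dim_{\bbc}\ext^1_{\bbc Q}(M_1,N/N_1)$; by the Auslander--Reiten formula this equals $\dim_{\bbc}\hom_{\bbc Q}(N/N_1,\tau M_1)=\chi(\mathbb{P}\hom_{\bbc Q}(N/N_1,\tau M_1))$---here one uses that over the hereditary algebra $\bbc Q$ no nonzero morphism $N/N_1\to\tau M_1$ factors through an injective, because a quotient of an injective is injective, hence a direct summand, whereas $\tau M_1$ has no injective direct summand---and this number is constant on the pieces $\str{(N_1,M_1)}$. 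A second application of Proposition~\ref{Euler} identifies $\chi(\mathcal I)$ with $C_{\textbf g}$, proving part~(1).

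For part~(2) one runs the same scheme with $N$ replaced by the shifted projective $P[1]$, using the rules $X_{P[1]}=x^{\textbf{dim}\,top(P)}$ and $X_{I'[-1]}=x^{\textbf{dim}\,\mathrm{soc}I'}$ of Lemma~\ref{Huberylemma1}. The module extension group $\ext^1_{\bbc Q}(M,-)$ is now replaced by $\hom_{\bbc Q}(M,I)$ with $I=\mathrm{DHom}_{\bbc Q}(P,\bbc Q)$ (and $\dim_{\bbc}\hom_{\bbc Q}(M,I)=\dim_{\bbc}\hom_{\bbc Q}(P,M)$ by adjunction), and $\hom_{\bbc Q}(N,\tau M)$ is replaced by $\hom_{\bbc Q}(P,M)$; a cokernel of a morphism $M\to I$, being a quotient of an injective over $\bbc Q$, is again injective, which is what produces the injective terms $I'$ in the first displayed sum of part~(2). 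Building the two corresponding incidence varieties from the kernel/cokernel morphisms of Section~\ref{morphism} and invoking Proposition~\ref{associativity2} once more gives the two displayed sums, all exponents being matched by Lemma~\ref{Huberylemma1}. The main obstacle throughout is to upgrade the set-theoretic incidences above to genuine morphisms of algebraic varieties, so that Proposition~\ref{Euler}(2) applies---this is precisely the purpose of the kernel, cokernel and Auslander--Reiten morphisms constructed stratum-by-stratum in Section~\ref{morphism}---together with checking that the relevant $\bbc^{*}$-actions are free with locally trivial quotients, that the fibres appearing are affine spaces or empty, and the routine but lengthy verification that $X_L$, $X_UX_Vx^{\textbf{dim}\,\mathrm{soc}I}$ and $\dim_{\bbc}\ext^1_{\bbc Q}(M,N)\,X_MX_N$ are supported on the same Laurent monomials.
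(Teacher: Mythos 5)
Your argument for part (1) is correct and is essentially the paper's proof: you expand both sides with the reformulated Caldero--Chapoton map, handle the $\hom(N,\tau M)$ term via Proposition~\ref{associativity2}, and double-count a projectivized incidence variety fibred over $\mathbb{P}\ext^1_{\bbc Q}(M,N)$ on one side and over $\bigsqcup\mathrm{Gr}(N)\times\mathrm{Gr}(M)$ on the other, with the Auslander--Reiten formula converting $\dim\ext^1(M_1,N/N_1)$ into $\chi(\mathbb{P}\hom(N/N_1,\tau M_1))$; the only cosmetic difference is that you work with the \emph{complement} $\{\rho_{N_1,M_1}(\varepsilon)\neq0\}$ rather than the paper's $\mathrm{EF}_{\textbf d}(M,N)=\{(\varepsilon,L_1)\}$, which trades the paper's explicit $\beta,\beta'$ orthogonality computation for a direct use of the surjectivity of $\rho_{N_1,M_1}$. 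One small inaccuracy in your sketch of part~(2): the paper does not invoke Proposition~\ref{associativity2} there at all --- because one side of the extension is a shifted projective, the needed ``associativity'' collapses to the elementary identity $\dim\hom(P,M)=\dim\hom(P,M_1)+\dim\hom(P,M/M_1)$ (exactness of $\hom(P,-)$ on $0\to M_1\to M\to M/M_1\to0$) together with the adjunction $\hom(X,I)\cong D\hom(P,X)$, and a single partition of $\mathrm{Gr}_{\textbf e_1}(M)$ suffices; your incidence-variety scheme would also work but is heavier machinery than the paper uses.
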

\begin{proof}
We set
$$
\Sigma_2:=\sum_{I, \textbf d_1,\textbf d_2,}\sum_{ V\in S(\textbf
d_1),U\in S(\textbf d_2)}\chi(\mathbb{P}\mathrm{Hom}_{\bbc
Q}(N,\tau M)_{\str{V}\oplus \str{U}\oplus
I[-1]})X_{U}X_{V}x^{\textbf{dim} \mathrm{soc}I}
$$
By definition of $X_{M},$ the sum is
$$
\sum_{ I,\textbf d_1,\textbf d_2,\textbf e_1,\textbf e_2,
  V\in S(\textbf d_1),U\in
S(\textbf d_2) }\chi(\mathbb{P}\mathrm{Hom}_{\bbc Q}(N,\tau
M)_{\str{V}\oplus \str{U}\oplus
I[-1]})\cdot$$$$\chi(\mathrm{Gr}_{\textbf
e_1}(V))\chi(\mathrm{Gr}_{\textbf e_2}(U)x^{(\textbf e_1+\textbf
e_2)R+(\textbf{dim} V-\textbf e_1+\textbf{dim} U-\textbf
e_2)R^{tr}-(\textbf{dim}V+\textbf{dim} U)+\textbf{dim}
\mathrm{soc}I}.
$$
By Lemma \ref{Huberylemma1}, we have
$$
\hspace{-3.5cm}(\textbf{dim} V+\textbf{dim}
U)R^{tr}-(\textbf{dim}V+\textbf{dim} U)+\textbf{dim} \mathrm{soc}I
$$
$$
\hspace{-3.1cm}=(\textbf{dim} V+\textbf{dim}
U)R^{tr}-(\textbf{dim}V+\textbf{dim} U)+\textbf{dim}I(1-R^{tr})
$$
$$
=(\textbf{dim}\tau
U+\textbf{dim}I-\textbf{dim}V)-(\textbf{dim}\tau
U+\textbf{dim}I-\textbf{dim}V)R^{tr}+\textbf{dim}U (R^{tr}-R)
$$
$$
\hspace{-2.3cm}=(\textbf{dim}\tau
M-\textbf{dim}N)-(\textbf{dim}\tau
M-\textbf{dim}N)R^{tr}+\textbf{dim}U (R^{tr}-R)
$$
$$
\hspace{-3.1cm}=\textbf{dim}\tau
M(1-R^{tr})-\textbf{dim}N+\textbf{dim}N R^{tr}+\textbf{dim}U
(R^{tr}-R)
$$
$$
\hspace{-2.3cm}=(\textbf{dim}M-\textbf{dim}U)R+(\textbf{dim}N+\textbf{dim}U)R^{tr}-(\textbf{dim}M+\textbf{dim}N)
$$
Then, the exponent of $x$ is
$$
(\textbf e_1+\textbf
e_2+\textbf{dim}M-\textbf{dim}U)R+(\textbf{dim}N+\textbf{dim}U-\textbf
e_1-\textbf e_2)R^{tr}-(\textbf{dim}M+\textbf{dim}N).
$$
Hence, we have
$$
\Sigma_2=\sum_{\textbf d}\sum_{\begin{subarray}{r}I,\textbf
d_1,\textbf d_2,\textbf e_1,\textbf e_2;\\\textbf e_1+\textbf
e_2+\textbf{dim}M-\textbf d_2=\textbf d\end{subarray}}
  \sum_{V\in S(\textbf d_1),U\in
S(\textbf d_2) }\chi(\mathbb{P}\mathrm{Hom}_{\bbc Q}(N,\tau
M)_{\str{V}\oplus \str{U}\oplus I[-1]})\cdot
$$
$$
\chi(\mathrm{Gr}_{\textbf e_1}(V))\chi(\mathrm{Gr}_{\textbf
e_2}(U)x^{\textbf d R+(\textbf e-\textbf d)R^{tr}-\textbf e}
$$
where $\textbf e=\textbf{dim}M+\textbf{dim}N.$ Using Proposition
\ref{associativity2}, we obtain that $\Sigma_2$ is equal to
$$
\sum_{\textbf d}\sum_{\textbf e'_1,\textbf e'_2;\textbf
e'_1+\textbf e'_2=\textbf d}\sum_{(N_1,M_1)\in R(\textbf
e'_1,\textbf
e'_2)}\chi(\str{(N_1,M_1)})\chi(\mathbb{P}\mathrm{Hom}_{\bbc
Q}(N/N_1, \tau M_1)) x^{\textbf d R+(\textbf e-\textbf
d)R^{tr}-\textbf e}.
$$

Now we set
$$
\Sigma_1:=\sum_{L\in S(\textbf
e)}\chi(\mathbb{P}\mathrm{Ext}^{1}_{\bbc Q}(M,N)_{\str{L}})X_{L}
$$
We consider
$$\mathrm{EF}_{\textbf d}(M,N)=\{(\varepsilon,
L_1)\mid  \varepsilon\in \ext^{1}_{\bbc Q}(M,N)_{L}\setminus\{0\},
L_1\in \mathrm{Gr}_{\textbf d}(L)\}.$$ As a vector space,
$L=M\oplus N.$ Define
$$t.(m,n)=(m,t.n)$$ for any $(m,n)\in M\oplus N$ and $t\in \bbc^{*}$. This induces the
action of $\bbc^{*}$ on $L_1.$ So we have an $\bbc^{*}$-action on
$\mathrm{EF}_{\textbf d}(M,N)$ (\cite{GLS2006}).  As the
discussion in Section 2.5, the $\bbc^{*}$-action induces the
geometric quotient $\mathbb{P}\mathrm{EF}_{\textbf d}(M,N).$ The
projection
$$\mathbb{P}\mathrm{EF}_{\textbf d}(M,N)\rightarrow \mathbb{P}\ext^{1}_{\bbc Q}(M,N)$$ has the
fibre $\{(\mathbb{P}\varepsilon, L_1)\mid L_1\in
\mathrm{Gr}_{\textbf d}(L)\}$ for any $\mathbb{P}\varepsilon\in
\mathbb{P}\ext^{1}_{\bbc Q}(M,N)_{L}.$ By Theorem \ref{Euler} and
Corollary \ref{partition}, this implies
$$
\chi(\mathbb{P}\mathrm{EF}_{\textbf d}(M,N))=\sum_{L\in S(\textbf
e)}\chi(\mathbb{P}\mathrm{Ext}^{1}_{\bbc
Q}(M,N)_{\str{L}})\chi(\mathrm{Gr}_{\textbf d}(L))
$$
and
$$
\Sigma_1=\sum_{\textbf d}\chi(\mathbb{P}\mathrm{EF}_{\textbf
d}(M,N))x^{\textbf d R+(\textbf e-\textbf d)R^{tr}-\textbf e}.
$$
On the other hand, we have a natural morphism
$$
\Psi: \mathrm{EF}_{\textbf d}(M,N)\rightarrow \bigsqcup_{\textbf
e'_1,\textbf e'_2;\textbf e'_1+\textbf e'_2=\textbf
d}\mathrm{Gr}_{\textbf e'_1}(M)\times \mathrm{Gr}_{\textbf e'_2}(N)
$$
mapping $(\varepsilon=[(f,g)], L_1)$ to $(g(L_1), f^{-1}(L_1)).$
Here $\varepsilon$ is the equivalence class of the exact sequence
$$
0\rightarrow N\xrightarrow{f} L\xrightarrow{g}M\rightarrow 0
$$
The morphism $\Psi$ induces a morphism
$$
\mathbb{P}\Psi: \mathbb{P}\mathrm{EF}(M,N):=\bigsqcup_{\textbf
d}\mathbb{P}\mathrm{EF}_{\textbf d}(M,N)\rightarrow
\bigsqcup_{\textbf e'_1,\textbf e'_2}\mathrm{Gr}_{\textbf
e'_1}(M)\times \mathrm{Gr}_{\textbf e'_2}(N).
$$
Let's compute the fibre for any $(M_1,N_1)\in \mathrm{Gr}_{\textbf
e'_1}(M)\times \mathrm{Gr}_{\textbf e'_2}(N).$ Consider the map
$$
\beta': \ext^{1}(M,N)\oplus \ext^{1}_{\bbc Q}(M_1,N_1)\rightarrow
\ext^{1}_{\bbc Q}(M_1,N)
$$
sending $(\varepsilon,\varepsilon')$ to
$\varepsilon_{M_1}-\varepsilon'_{N}$ where $\varepsilon_{M_1}$ and
$\varepsilon'_{N}$ are induced by including $M_1\subseteq M$ and
$N_1\subseteq N,$ respectively as follows:
$$
\xymatrix{
\varepsilon_{M_1}:&0\ar[r]&N\ar[r]\ar@{=}[d]&L_1\ar[r]\ar[d]&M_1\ar[r]\ar[d]&0\\
\varepsilon: & 0\ar[r]&N\ar[r]&L\ar[r]^-{\pi}&M\ar[r]&0 }
$$
where $L_1$ is the pullback, and
$$
\xymatrix{
\varepsilon':&0\ar[r]&N_1\ar[r]\ar[d]&L'\ar[r]\ar[d]&M_1\ar[r]\ar@{=}[d]&0\\
\varepsilon'_N: & 0\ar[r]&N\ar[r]&L'_1\ar[r]&M_1\ar[r]&0 }
$$
where $L'_1$ is the pushout. It is clear that
$\varepsilon,\varepsilon'$ and $M_1,N_1$ induce the inclusions
$L_1\subseteq L$ and $L'\subseteq L'_1.$ Considering the map
$$
p_0: \ext^{1}_{\bbc Q}(M,N)\oplus \ext_{\bbc
Q}^{1}(M_1,N_1)\rightarrow \ext^{1}_{\bbc Q}(M,N),
$$
we have (\cite[Lemma
2.4.2]{GLS2006})$$(\mathbb{P}\Psi)^{-1}(M_1,N_1)=\{(\mathbb{P}\varepsilon,
L')\mid
\mathbb{P}\varepsilon\in\mathbb{P}(p_0(\mathrm{ker}\beta')), L'\in
F(\varepsilon,M_1,N_1) \}$$ where
$F(\varepsilon,M_1,N_1)=\{L'\subseteq L\mid \pi(L')=M_1,L'\cap
N=N_1\}$ is isomorphic to the affine space $\hom(M_1,N/N_1)$ or an
empty set (\cite[Lemma 7]{Hubery2005}, see also \cite[Lemma
3.3.1]{GLS2006} for a similar discussion). By the 2-Calabi-Yau
property (Auslander-Reiten formula) $\ext^{1}(M,N)\simeq
D\hom(N,\tau M),$ we can consider the dual of $\beta'$ which is
$$
\beta: \hom(N,\tau M_1)\rightarrow \hom(N,\tau M)\oplus
\hom(N_1,\tau M_1).
$$
By using the knowledge of bilinear form and orthogonality, we know
that as a vector space,
$$
(p_0(\mathrm{ker}\beta'))^{\perp}= \mathrm{Im}\beta\bigcap
\hom(N,\tau M)\simeq \hom(N/N_1,\tau M_1).
$$
Note that if $F(\varepsilon,M_1,N_1)$ is an empty set, then
$\mathbb{P}(p_0(\mathrm{ker}\beta'))$ is an empty set. In this
case,
$\mathrm{dim}_{\bbc}\mathrm{Ext}^1(M,N)=\chi(\mathbb{P}\mathrm{Hom}(N/N_1,\tau
M_1))$. Hence, we obtain
\begin{equation}\label{euleroffibre}
\chi((\mathbb{P}\Psi)^{-1}(M_1,N_1))=\mathrm{dim}_{\bbc}\mathrm{Ext}^1(M,N)-\chi(\mathbb{P}\mathrm{Hom}(N/N_1,\tau
M_1)).
\end{equation}
Now, using the partitions as in Proposition \ref{associativity2}, we
know
$$\mathrm{Gr}_{\textbf e'_1}(M)\times \mathrm{Gr}_{\textbf e'_2}(N)=\bigsqcup_{(N_1,M_1)\in R(\textbf e'_1,\textbf e'_2)}\str{(N_1,M_1)}.$$
Hence, according the Euler characteristic of the fibres in
\eqref{euleroffibre} and Theorem \ref{Euler}, we obtain $\Sigma_1$
is equal to
$$
\sum_{\textbf d}\sum_{\textbf e'_1,\textbf e'_2;\textbf
e'_1+\textbf e'_2=\textbf d}\sum_{(N_1,M_1)\in R(\textbf
e'_1,\textbf
e'_2)}\chi(\str{(N_1,M_1)})(\mathrm{dim}_{\bbc}\mathrm{Ext}^1(M,N)-\chi(\mathbb{P}\mathrm{Hom}_{\bbc
Q}(N/N_1, \tau M_1)))\cdot $$$$x^{\textbf d R+(\textbf e-\textbf
d)R^{tr}-\textbf e}
$$
Hence,
$$
\Sigma_1+\Sigma_2 =\mathrm{dim}_{\bbc}\mathrm{Ext}^1(M,N)X_MX_N.
$$
We complete the proof of the first assertion of the theorem. As for
the second part, we set
$$
T_1=\sum_{I', \textbf e_1, V; V\in S(\textbf d_1(I'))}
 \chi(\mathbb{P}\mathrm{Hom}_{\bbc Q}(M,I)_{\str{V}\oplus I'[-1]
})\chi(Gr_{\textbf e_1}(V))\cdot $$$$x^{\textbf e_1R+(\textbf
d_1(I')-\textbf e_1)R^{tr}-\textbf d_1(I')+\textbf{dim}
\mathrm{soc}I'}
$$
and
$$
T_2=\sum_{P',\textbf e_2, U; U\in S(\textbf
d_2(P'))}\chi(\mathbb{P}\mathrm{Hom}_{\bbc Q}(P,M)_{P'[1]\oplus
\str{U}})\chi(\mathrm{Gr}_{\textbf e_2}(V))\cdot$$$$x^{\textbf
e_2R+(\textbf d_2(P')-\textbf e_2)R^{tr}-\textbf
d_2(P')+\textbf{dim} top(P')}
$$
By the similar argument as in Corollary \ref{partition}, there is
a finite subset $R(\textbf e_1)$ of $\mathrm{Gr}_{\textbf e_1}(M)$
such that partition
$$
\mathrm{Gr}_{\textbf e_1}(M)=\bigsqcup_{M_1\in R(\textbf
e_1)}\{M_1\}
$$
where $\{M_1\}=\{W\in \mathrm{Gr}_{\textbf e_1}(M)\mid
\chi(\mathbb{P}\mathrm{Hom}(M/W,
I))=\chi(\mathbb{P}\mathrm{Hom}(M/M_1, I))\}$ is a constructible
subset of $\mathrm{Gr}_{\textbf e_1}(M)$. Note that
$$
 \{M_1\}=\{W\in \mathrm{Gr}_{\textbf e_1}(M)\mid
\chi(\mathbb{P}\mathrm{Hom}(P, W))=\chi(\mathbb{P}\mathrm{Hom}(P,
M_1))\}.
$$

 By using Proposition \ref{Euler}, we obtain that $T_1$ is equal to
$$
\sum_{\textbf e_1}\sum_{M_1\in R(\textbf e_1)}\chi(\{M_1\})\cdot
\chi(\mathbb{P}\mathrm{Hom}(M/M_1, I))x^{\textbf
e_1R+(\textbf{dim}M-\textbf
e_1)R^{tr}-\ud_M+\textbf{dim}\mathrm{soc}(I)}
$$
and $T_2$ is equal to
$$
\sum_{\textbf e_1}\sum_{M_1\in R(\textbf e_1)}\chi(\{M_1\})\cdot
\chi(\mathbb{P}\mathrm{Hom}(P, M_1))x^{\textbf
e_1R+(\textbf{dim}M-\textbf e_1)R^{tr}-\textbf{dim}
M+\textbf{dim}top(P)}.
$$
Since $\textbf{dim}\mathrm{soc}(I)= \textbf{dim}top(P)$ and
$$\chi(\mathbb{P}\mathrm{Hom}(P,
M_1))+\chi(\mathbb{P}\mathrm{Hom}(M/M_1,
I))=\chi(\mathbb{P}\mathrm{Hom}(P, M)),$$ we have
$$
T_1+T_2=\mathrm{dim}_{\bbc}\mathrm{Hom}(P, M)\sum_{\textbf
e}\chi(\mathrm{Gr}_{\textbf e}(M))x^{\textbf e R+(\textbf{dim}
M-\textbf e)R^{tr}-\textbf{dim} M+\textbf{dim} top(P)}
$$
$$
\hspace{-3.5cm}=\mathrm{dim}_{\bbc}\mathrm{Hom}_{\bbc
Q}(P,M)X_{M}x^{\textbf{dim}top(P)}.
$$
\end{proof}
\begin{remark}\label{final}
The proof of Theorem \ref{clustertheorem} only involves
Auslander-Reiten formula and the high order associativity. It
inspires us to look for an analog of the present cluster theorem
for hereditary categories with Serre duality. The simplest case is
as follows: if $Q$ is a Kronecker quiver, then we know $\md^b(Q)$
is derived equivalent to $\md^b(\mathrm{coh} \mathbb{P}^1)$. We
hope that the present approach can help us to found the cluster
multiplication for $\mathrm{coh} \mathbb{P}^1$. One of the
difficulties is to substitute the stacks for module varieties to
rewrite the results in this paper as done in \cite{Joyce} and
\cite{Schiffimann}. It will be interesting to compare these
cluster multiplication theorems.
\end{remark}

\begin{remark}
In Theorem \ref{clustertheorem}, the condition that $M$ contains
no projective summands is not essential. Let $M'=M\oplus P$ with
 the maximal projective summand $P$. Then we multiply two sides of
 the first equation in Theorem \ref{clustertheorem} by $X_P$ to
 obtain the equation involving $X_{M'}X_N.$
\end{remark}
Now we consider a particular case that $M$ is a non-projective
indecomposable $\bbc Q$-module and $N=\tau M.$ By the
Auslander-Reiten formula, there is an isomorphism of
$\mathrm{End}_{\bbc Q}(M)^{op}$-modules: $\ext^1_{\bbc Q}(M, \tau
M)\cong D\mathrm{End}_{\bbc Q}(\tau M)$. It induces the
isomorphisms:
\begin{equation}\label{AR new duality1}
\mathrm{soc}\mathrm{Ext}_{\bbc Q}^1(M, \tau M)\cong
D(\mathrm{End}_{\bbc Q}(\tau M)/\mathrm{rad}\mathrm{End}_{\bbc
Q}(\tau M)),
\end{equation} where $\mathrm{soc}\mathrm{Ext}_{\bbc
Q}^1(M, \tau M)$ is the socle of $\ext^1_{\bbc Q}(M, \tau M)$ as a
$\mathrm{End}_{\bbc Q}(M)^{op}$-module, and
\begin{equation}\label{AR new duality}
\mathrm{Ext}_{\bbc Q}^1(M, \tau M)/\mathrm{soc}\mathrm{Ext}_{\bbc
Q}^1(M, \tau M)\cong D(\mathrm{rad}\mathrm{End}_{\bbc Q}(\tau M)).
\end{equation}
The equations \eqref{AR new duality1} and \eqref{AR new duality} can
be viewed as variants of 2-Calabi-Yau property (Auslander-Reiten
formula).
 An extension $\varepsilon\in
\ext^1_{\bbc Q}(M, \tau M)$ is an Auslander-Reiten sequence if and
only if $\varepsilon\in \mathrm{soc}\mathrm{Ext}_{\bbc Q}^1(M, \tau
M).$ We denote by $L_0$ the middle term of $\varepsilon.$ In the
proof of Theorem \ref{clustertheorem}, we substitute
$\mathrm{soc}\mathrm{Ext}_{\bbc Q}^1(M, \tau M)$ or
$\mathrm{Ext}_{\bbc Q}^1(M, \tau M)/\mathrm{soc}\mathrm{Ext}_{\bbc
Q}^1(M, \tau M)$ for $\mathrm{Ext}_{\bbc Q}^1(M, \tau M)$ and
 the above variants \eqref{AR new duality1}
or \eqref{AR new duality} for Auslander-Reiten formula. Then we
obtain the following result (see \cite{CC} or Lemma 7 in
\cite{Hubery2005} for different proofs).

\begin{Prop}\label{ARcluster}
Let $Q$ be an acyclic quiver and $M$ be a non-projective
indecomposable $\bbc Q$-module. Then
$$\mathrm{dim}_{\bbc}\mathrm{Ext}^1_{\bbc Q}(M,\tau M)/\mathrm{soc}\mathrm{Ext}^1_{\bbc Q}(M,\tau M)X_{M} X_{\tau M} =\sum_{L\ncong L_0\in S(\textbf e)}\chi(\mathbb{P}\mathrm{Ext}^{1}_{\bbc Q}(M,\tau M)_{\str{L}})X_{L}$$
$$
+\sum_{I, \textbf d_1,\textbf d_2}\sum_{ V\in S(\textbf d_1),U\in
S(\textbf d_2)}\chi(\mathbb{P}\mathrm{rad}\mathrm{End}_{\bbc
Q}(\tau M)_{\str{V}\oplus \str{U}\oplus
I[-1]})X_{U}X_{V}x^{\textbf{dim} \mathrm{soc}I}
$$
and
$$
X_{M}X_{\tau M}=1+X_{L_0}
$$
where $\textbf e=\textbf{dim} M+\textbf{dim} \tau M$ and
$\mathbb{P}\mathrm{rad}\mathrm{End}_{\bbc Q}(\tau M)$ is the
quotient of $\mathrm{rad}\mathrm{End}_{\bbc Q}(\tau M)$ under the
free action of $\bbc^{*}$ and $L_0$ is the middle term of the
Auslander-Reiten sequence ending in $M$.
\end{Prop}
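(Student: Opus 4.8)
The plan is to obtain Proposition \ref{ARcluster} by specializing the proof of Theorem \ref{clustertheorem} to $N=\tau M$ and then running exactly the same argument after replacing $\ext^1_{\bbc Q}(M,\tau M)$ by the quotient $\ext^1_{\bbc Q}(M,\tau M)/\mathrm{soc}\,\ext^1_{\bbc Q}(M,\tau M)$ (resp.\ by the socle $\mathrm{soc}\,\ext^1_{\bbc Q}(M,\tau M)$), and correspondingly replacing the Auslander--Reiten formula $\ext^1_{\bbc Q}(M,\tau M)\cong D\,\mathrm{End}_{\bbc Q}(\tau M)$ by its variant \eqref{AR new duality} (resp.\ \eqref{AR new duality1}). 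First I would record the elementary facts that, since $M$ is a non-projective indecomposable $\bbc Q$--module, $\tau M$ is indecomposable, and since $\bbc$ is algebraically closed $\mathrm{End}_{\bbc Q}(\tau M)$ is local; hence $\mathrm{End}_{\bbc Q}(\tau M)/\mathrm{rad}\,\mathrm{End}_{\bbc Q}(\tau M)\cong\bbc$ and, dually, $\mathrm{soc}\,\ext^1_{\bbc Q}(M,\tau M)$ is one--dimensional, its unique nonzero $\bbc^{*}$--orbit being the class of the Auslander--Reiten sequence ending in $M$, whose middle term is $L_0$.

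The observation that makes the specialization work is that the proof of Theorem \ref{clustertheorem}(1) uses the ambient space $\ext^1_{\bbc Q}(M,N)$ only in two ways: through the $\bbc^{*}$--equivariant geometry of $\mathrm{EF}_{\textbf d}(M,N)$ and the projection $\mathbb{P}\Psi$, whose fibres are computed in \eqref{euleroffibre} via the orthogonality $p_0(\mathrm{ker}\,\beta')^{\perp}\simeq\hom_{\bbc Q}(N/N_1,\tau M_1)$ coming from the Auslander--Reiten pairing; and through the high order associativity of Proposition \ref{associativity2} applied to $\hom_{\bbc Q}(N,\tau M)$ via its $\bbc^{*}$--stable constructible partition. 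With $N=\tau M$ one has $\hom_{\bbc Q}(N,\tau M)=\mathrm{End}_{\bbc Q}(\tau M)$, and both $\mathrm{soc}\,\ext^1_{\bbc Q}(M,\tau M)$ and $\ext^1_{\bbc Q}(M,\tau M)/\mathrm{soc}$ are $\bbc^{*}$--stable constructible subspaces which, by \eqref{AR new duality1} and \eqref{AR new duality}, are matched by the Auslander--Reiten pairing with the $\bbc^{*}$--stable constructible subspaces $\mathrm{End}_{\bbc Q}(\tau M)/\mathrm{rad}$ and $\mathrm{rad}\,\mathrm{End}_{\bbc Q}(\tau M)$ of $\hom_{\bbc Q}(\tau M,\tau M)$, compatibly with the maps $\beta,\beta',p_0$ and the pullback/pushout constructions in the proof and with restriction along $M_1\subseteq M$, $N_1\subseteq N$. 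Replacing $\ext^1$ by the quotient and $\hom_{\bbc Q}(N,\tau M)$ by (the image of) $\mathrm{rad}\,\mathrm{End}_{\bbc Q}(\tau M)$ throughout, the computations of $\Sigma_1$ and $\Sigma_2$ in the proof of Theorem \ref{clustertheorem} then go through verbatim and produce the first displayed identity of the proposition.

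For the second identity I would run the argument once more with the one--dimensional space $\mathrm{soc}\,\ext^1_{\bbc Q}(M,\tau M)$ in place of $\ext^1$ and \eqref{AR new duality1} in place of the Auslander--Reiten formula. The left--hand side becomes $1\cdot X_{M}X_{\tau M}$. On the right, the $\Sigma_1$--type contribution is $\sum_{L}\chi(\mathbb{P}\,\mathrm{soc}\,\ext^1_{\bbc Q}(M,\tau M)_{\str{L}})X_{L}$, and since the nonzero part of the socle is a single $\bbc^{*}$--orbit it lies in one stratum $\str{L_0}$, giving $X_{L_0}$; the $\Sigma_2$--type contribution involves $\mathrm{End}_{\bbc Q}(\tau M)/\mathrm{rad}\cong\bbc$, whose only nonzero $\bbc^{*}$--orbit is represented by $\mathrm{id}_{\tau M}$, which has zero kernel and cokernel, so the corresponding $V$, $U$ and $I$ are all $0$ and the term $X_{U}X_{V}x^{\textbf{dim}\,\mathrm{soc}I}$ equals $1$. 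Hence $X_{M}X_{\tau M}=1+X_{L_0}$, which in particular recovers \eqref{CC-AR}.

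The main obstacle will be making the second use above precise: one has to verify not merely that $\ext^1_{\bbc Q}(M,\tau M)=\mathrm{soc}\oplus(\ext^1/\mathrm{soc})$ and $\mathrm{End}_{\bbc Q}(\tau M)=(\mathrm{End}/\mathrm{rad})\oplus\mathrm{rad}$ are dual as vector spaces under the Auslander--Reiten pairing, but that this duality is compatible with all the auxiliary linear maps used in the derivation of \eqref{euleroffibre} (the maps $\beta'$, $\beta$, $p_0$ and the pullback $L_1$, pushout $L'_1$) and with the partition of $\mathrm{Gr}_{\textbf e'_1}(M)\times\mathrm{Gr}_{\textbf e'_2}(N)$ feeding Proposition \ref{associativity2}; equivalently, that passing to these sub/quotients commutes with restriction along the inclusions $M_1\subseteq M$ and $N_1\subseteq N$. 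Once this functoriality of the socle filtration is checked, the rest of the proof is a transcription of that of Theorem \ref{clustertheorem}.
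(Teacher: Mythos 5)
Your proposal follows essentially the same route as the paper: substitute $\mathrm{soc}\,\mathrm{Ext}^1_{\bbc Q}(M,\tau M)$ (resp.\ $\mathrm{Ext}^1_{\bbc Q}(M,\tau M)/\mathrm{soc}$) for $\mathrm{Ext}^1$ in the proof of Theorem~\ref{clustertheorem}, together with the corresponding variant \eqref{AR new duality1} (resp.\ \eqref{AR new duality}) of the Auslander--Reiten duality. Like the paper, you work out only the second identity in detail. The paper's fibre computation --- the fibre of $\mathbb{P}\Psi$ over $(M_1,N_1)$ is empty exactly when $(M_1,N_1)=(M,0)$ and has Euler characteristic $\dim_{\bbc}\mathrm{soc}\,\mathrm{Ext}^1=1$ otherwise, giving $\Sigma_1=X_M X_{\tau M}-1$ --- is equivalent to your direct observation that the $\Sigma_2$-type contribution equals $1$, coming from the single nonzero $\bbc^*$-orbit in $\mathrm{End}_{\bbc Q}(\tau M)/\mathrm{rad}$ represented by $\mathrm{id}_{\tau M}$ with zero kernel, cokernel and $I$.

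One caution on the first identity, where your write-up is looser than the paper's. The $\Sigma_1$ term there reads $\sum_{L\ncong L_0}\chi(\mathbb{P}\,\mathrm{Ext}^1_{\bbc Q}(M,\tau M)_{\str{L}})X_L$, a sum over strata of the \emph{full} space $\mathrm{Ext}^1$ (with $L\ncong L_0$), not over strata of $\mathrm{Ext}^1/\mathrm{soc}$. Since the isomorphism class of the middle term is in general not constant along cosets of $\mathrm{soc}$, there is no literal stratification of $\mathrm{Ext}^1/\mathrm{soc}$ by middle terms, so ``replace $\mathrm{Ext}^1$ by the quotient and run verbatim'' is not immediate: one must either argue that the strata with $L\ncong L_0$ exhaust $\mathrm{Ext}^1\setminus\mathrm{soc}$ and that the projection to the quotient has affine ($\cong\bbc$) fibres compatibly with the $\bbc^*$-action, or --- more simply, and this is what the paper implicitly does with the remark ``we only need to prove the second equation'' --- obtain the first identity by subtracting the second identity from Theorem~\ref{clustertheorem}(1) applied with $N=\tau M$.
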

\begin{proof}
We only need to prove the second equation. It is equivalent to
prove that $\mathrm{dim}_{\bbc}\mathrm{soc}\mathrm{Ext}^1_{\bbc
Q}(M,\tau M)X_{M}X_{\tau M}$ is equal to
\begin{equation}\label{ARcluster2}
\chi(\mathbb{P}\mathrm{Ext}^1_{\bbc Q}(M,\tau
M)_{L_0})X_{L_0}+\chi(\mathbb{P}(\mathrm{End}_{\bbc Q}(\tau
M)/\mathrm{rad}\mathrm{End}_{\bbc Q}(\tau M))).
\end{equation}
We use the notation in the proof of Theorem \ref{clustertheorem},
and set
$$
\Sigma_1:=\chi(\mathbb{P}\mathrm{Ext}^{1}_{\bbc Q}(M,\tau
M)_{L_0})X_{L_0}
$$
and
$$\mathrm{EF}_{\textbf d}(M,\tau M)=\{(\varepsilon,
L_1)\mid  \varepsilon\in \ext^{1}_{\bbc Q}(M,N)_{L_0}, L_1\in
\mathrm{Gr}_{\textbf d}(L_0)\}.$$ The $\bbc^{*}$-action induces the
geometric quotient $\mathbb{P}\mathrm{EF}_{\textbf d}(M,\tau M).$ We
have
$$
\Sigma_1=\sum_{\textbf d}\chi(\mathbb{P}\mathrm{EF}_{\textbf
d}(M,\tau M))x^{\textbf d R+(\textbf e-\textbf d)R^{tr}-\textbf e}
$$
and a morphism
$$
\mathbb{P}\Psi: \mathbb{P}\mathrm{EF}(M,\tau
M):=\bigsqcup_{\textbf d}\mathbb{P}\mathrm{EF}_{\textbf d}(M,\tau
M)\rightarrow \bigsqcup_{\textbf e'_1,\textbf
e'_2}\mathrm{Gr}_{\textbf e'_1}(M)\times \mathrm{Gr}_{\textbf
e'_2}(\tau M).
$$
For any $(M_1, N_1)\in \mathrm{Gr}_{\textbf e'_1}(M)\times
\mathrm{Gr}_{\textbf e'_2}(\tau M),$ we consider the map
$$
\beta': \mathrm{soc}\ext^{1}(M,\tau M)\oplus \ext^{1}_{\bbc
Q}(M_1,N_1)\rightarrow \ext^{1}_{\bbc Q}(M_1,\tau M)
$$
and the map
$$
p_0: \mathrm{soc}\ext^{1}_{\bbc Q}(M,\tau M)\oplus \ext_{\bbc
Q}^{1}(M_1,N_1)\rightarrow \mathrm{soc}\ext^{1}_{\bbc Q}(M,\tau M),
$$
Then as in the proof of Theorem \ref{clustertheorem}, we have
$$(\mathbb{P}\Psi)^{-1}(M_1,N_1)=\{(\mathbb{P}\varepsilon,
L_1)\mid
\mathbb{P}\varepsilon\in\mathbb{P}(p_0(\mathrm{ker}\beta')),
L_1\in F(\varepsilon,M_1,N_1) \}$$ where
$F(\varepsilon,M_1,N_1)=\{L_1\subseteq L\mid \pi(L_1)=M_1,L_1\cap
N=N_1\}$ is isomorphic to the affine space $\hom(M_1,N/N_1)$ or a
empty set. By using the variant \eqref{AR new duality1} of
Auslander-Reiten formula $\mathrm{soc}\ext^{1}(M,N)\simeq
D(\mathrm{End}(\tau M)/\mathrm{radEnd}(M)),$ we can consider the
dual of $\beta'$
$$
\beta: \hom(\tau M,\tau M_1)\rightarrow \mathrm{End}(\tau
M)/\mathrm{radEnd}(M)\oplus \hom(N_1,\tau M_1)
$$
Then
$$
(p_0(\mathrm{ker}\beta'))^{\perp}= \mathrm{Im}\beta\bigcap
\mathrm{End}(\tau M)/\mathrm{radEnd}(M).
$$
which vanishes unless $N_1=0$ and $M_1=M$. Hence, we obtain

$$\chi((\mathbb{P}\Psi)^{-1}(M_1,N_1))=\left\{\begin{array}{cc}
\mathrm{dim}_{\bbc}\mathrm{soc}\mathrm{Ext}^1(M,N),& {\rm if}\,\  N_1\neq 0\,\ {\rm or}\,\ M_1\neq M,\\
0,&{\rm otherwise.}
\end{array}
\right.$$
This implies the equation \eqref{ARcluster2}.
\end{proof}

\subsection{An example}\label{example}
Let us illustrate Theorem \ref{clustertheorem} and Proposition
\ref{ARcluster} by the following example.  Let $Q$ be the
Kronecker quiver $\xymatrix{1\ar @<2pt>[r] \ar@<-2pt>[r]& 2}.$ Let
$S_1$ and $S_2$ be the simple modules associated to
vertices 1 and 2, respectively. Hence,
$$
R=\left(%
\begin{array}{cc}
  0 & 2 \\
  0 & 0 \\
\end{array}%
\right) \quad \mbox{and}\quad R'=\left(%
\begin{array}{cc}
  0 & 0 \\
  2 & 0 \\
\end{array}%
\right)
$$
and
$$
x_0:=X_{S_2}=x^{\mathrm{\underline{dim}}S_2R'-\mathrm{\underline{dim}}S_2}+x^{\mathrm{\underline{dim}}S_2R-\mathrm{\underline{dim}}S_2}=x_2^{-1}(1+x_1^2),
$$
$$
x_3:=X_{S_1}=x^{\mathrm{\underline{dim}}S_1R'-\mathrm{\underline{dim}}S_1}+x^{\mathrm{\underline{dim}}S_1R-\mathrm{\underline{dim}}S_1}=x_1^{-1}(1+x_2^2).$$

For $\lambda\in \mathbb{P}^1(\bbc),$ let $u_{\lambda}$ be the
regular representation $\xymatrix{\bbc\ar @<2pt>[r]^{1}
\ar@<-2pt>[r]_{\lambda}& \bbc}.$ By definition,
$$
X_{u_{\lambda}}=x^{(1,1)R'-(1,1)}+x^{(1,1)R-(1,1)}+x^{(0,1)R+(1,0)R'-(1,1)}=x_1x_2^{-1}+x_1^{-1}x_2
+x_1^{-1}x_2^{-1}.$$ Similarly, let $u_{\lambda(n)}$ ($n\geq 1$) be
the unique indecomposable regular $\bbc Q$-module with socle
$u_{\lambda}$ and length $n$. In particular, $u_{\lambda
(1)}=u_{\lambda}.$ Then $\mathrm{dim}_{\bbc}\ext^{1}(u_{\lambda},
u_{\lambda(n)})=1$ and for any $f\neq 0\in \hom(u_{\lambda(n)}, \tau
u_{\lambda}),$ we have an short exact sequence
$$
0\rightarrow u_{\lambda(n-1)}\rightarrow
u_{\lambda(n)}\xrightarrow{f} \tau u_{\lambda}\rightarrow 0.
$$
By using Theorem \ref{clustertheorem}, we have
$$
\mathrm{dim}_{\bbc}\ext^{1}(u_{\lambda},
u_{\lambda(n)})X_{u_{\lambda}}X_{u_{\lambda(n)}}=X_{u_{\lambda(n+1)}}-X_{u_{\lambda(n-1)}}.
$$

 It is clear that $X_{u_{\lambda(n)}}$ is irrelevant to the choice
of $\lambda\in \mathbb{P}^1(\bbc).$ We denote it by $r_n$. Set
$r_0=1$. Hence, we have
\begin{equation}\label{induction}
r_1=x_0x_3-x_1x_2 \quad \mbox{ and }\quad r_{n+1}=r_1r_n-r_{n-1}
\end{equation}
which has been shown as the elements of the dual semicanonical
canonical basis in \cite{CZ} and \cite{Zelevinsky}. For $n=2,$ it
is known that $\mathrm{dim}_{\bbc}\mathrm{Ext}^1_{\bbc
Q}(u_{\lambda(2)}, u_{\lambda(2)})=2$. The corresponding two
linear independent extensions are as follows:
$$
0\rightarrow u_{\lambda(2)}\rightarrow u_{\lambda(4)}\rightarrow
u_{\lambda(2)}\rightarrow 0$$ and
$$
0\rightarrow u_{\lambda(2)}\rightarrow u_{\lambda(1)}\oplus
u_{\lambda(3)}\rightarrow u_{\lambda(2)}\rightarrow 0.
$$
The latter is the Auslander-Reiten sequence. By using Theorem
\ref{clustertheorem}, we have
$$
\mathrm{dim}_{\bbc}\ext^{1}(u_{\lambda(2)},
u_{\lambda(2)})X_{u_{\lambda(2)}}X_{u_{\lambda(2)}}=X_{u_{\lambda(4)}}+X_{u_{\lambda(1)}}X_{u_{\lambda(3)}}+X^2_{u_{\lambda(1)}}+1.
$$
Hence, we have
$$
2r^2_2=r_4+r_1r_3+r^2_1+1.
$$
However, The equations \eqref{induction} tell us that
$r^2_1=r_2+1$ and $r_4+r_2=r_1r_3$. Therefore, we have
$r_2^2=r_1r_3+1.$ It agrees with Proposition \ref{ARcluster}.
\\
\\
\nd {\textbf{Acknowledgements.}} I am grateful to Dr. Xueqing
Cheng and Prof. Bin Zhu for helpful discussions.
\bibliographystyle{amsplain}

\end{document}